%%%%%%%%%%%%%%%%%%%%%%%%%%%%%%%%%%%%%%%%%
% Girth-Dynamical.tex 
\documentclass[10pt]{amsart}
%%%%%%%%%%%%%%%%%%%%%%%%%%%%%%%%%%%%%%%%%

% Preamble:
% The following are my personal files to be included in most of my math documents.
% I need to put copies of these files in the relevant directory

%\input{../../../MyTeX/MyPackages}
\usepackage{amsfonts,amsmath,amssymb,amscd}
\usepackage{mathrsfs}
%
% MyTheorems.tex
%
% This file is to be included via \input in any math document.
%

%%%%%%%%%%%%%%%%%%%%%%%%%%%%%%%%%%%%%%%%%%%%%%%
% Plain style
%
% Just usual plain style
%%%%%%%%%%%%%%%%%%%%%%%%%%%%%%%%%%%%%%%%%%%%%%%

\theoremstyle{plain}
\newtheorem{thm}{Theorem}
\newtheorem{prop}[thm]{Proposition}
\newtheorem{lem}[thm]{Lemma}
\newtheorem{coro}[thm]{Corollary}

\newtheorem*{thm*}{Theorem}
\newtheorem*{prop*}{Proposition}
\newtheorem*{lem*}{Lemma}
\newtheorem*{coro*}{Corollary}
\newtheorem*{conj*}{Conjecture}
\newtheorem*{prethm*}{Pre-Theorem}
\newtheorem*{preprop*}{Pre-Proposition}
\newtheorem*{prelem*}{Pre-Lemma}
\newtheorem*{precoro*}{Pre-Corollary}
\newtheorem*{fact*}{Fact}
\newtheorem*{q*}{Question}
\newtheorem*{bigq*}{Big Question}

\newtheorem*{thm4}{Theorem 4}

\newtheorem*{prop*K}{Proposition K}

%%%%%%%%%%%%%%%%%%%%%%%%%%%%%%%%%%%%%%%%%%%%%%%
% Break style
%
% Using these, the statement of theorem (or whatever) will start on the new line.
%%%%%%%%%%%%%%%%%%%%%%%%%%%%%%%%%%%%%%%%%%%%%%%

\newtheoremstyle{break}% name
  {9pt}%      Space above, empty = `usual value'
  {9pt}%      Space below
  {\itshape}% Body font
  {}%         Indent amount (empty = no indent, \parindent = para indent)
  {\bfseries}% Thm head font
  {.}%        Punctuation after thm head
  {\newline}% Space after thm head: \newline = linebreak
  {}%         Thm head spec

\theoremstyle{break}
\newtheorem{br-thm}{Theorem}
\newtheorem{br-prop}[thm]{Proposition}
\newtheorem{br-lem}[thm]{Lemma}
\newtheorem{br-coro}[thm]{Corollary}
\newtheorem{br-conj}[thm]{Conjecture}
\newtheorem{br-prethm}[thm]{Pre-Theorem}
\newtheorem{br-preprop}[thm]{Pre-Proposition}
\newtheorem{br-prelem}[thm]{Pre-Lemma}
\newtheorem{br-precoro}[thm]{Pre-Corollary}
\newtheorem{br-fact}[thm]{Fact}
\newtheorem{br-q}[thm]{Question}
\newtheorem{br-bigq}[thm]{Big Question}

%%%%%%%%%%%%%%%%%%%%%%%%%%%%%%%%%%%%%%%%%%%%%%%
% Plain+ style
%
% plain+ allows one to add some stuff in the heading with BOLD fonts.
% The stuff to be added should be thrown into [ ].
% To avoid numbering, define it by \newtheorem*
%
% Examples:
% \begin{thm}[Bing] gives "Theorem 1 (Bing)." with Bing in regular font
% \begin{thm+}[(Bing)] gives "Theorem 1 (Bing)." with Bing in bold font
% \begin{thm}[\cite{Bing}] gives "Theorem 1 ([Bi58])." with ([Bi58) in regular font
% \begin{thm+}[\cite{bing}] gives "Theorem 1 [Bi58]." with [Bi58] in bold font
% 
%%%%%%%%%%%%%%%%%%%%%%%%%%%%%%%%%%%%%%%%%%%%%%%

\newtheoremstyle{plain+}% name
  {3pt}%      Space above, empty = `usual value'
  {3pt}%      Space below
  {\itshape}% Body font
  {}%         Indent amount (empty = no indent, \parindent = para indent)
  {\bfseries}% Thm head font
  {.}%        Punctuation after thm head
  {.5em}%     Space after thm head: " " = normal interword space;
        %       \newline = linebreak
  {\thmnote{#1 #2 #3}}% Thm head spec

\theoremstyle{plain+}
\newtheorem{thm+}[thm]{Theorem}
\newtheorem{prop+}[thm]{Proposition}
\newtheorem{lem+}[thm]{Lemma}
\newtheorem{coro+}[thm]{Corollary}
\newtheorem{conj+}[thm]{Conjecture}
\newtheorem{q+}[thm]{Question}
\newtheorem{bigq+}[thm]{Big Question}

\newtheorem*{thm*+}{Theorem}
\newtheorem*{prop*+}{Proposition}
\newtheorem*{lem*+}{Lemma}
\newtheorem*{coro*+}{Corollary}
\newtheorem*{conj*+}{Conjecture}
\newtheorem*{q*+}{Question}
\newtheorem*{bigq*+}{Big Question}

\theoremstyle{definition}
\newtheorem{defn}[thm]{Definition}

\theoremstyle{remark}
\newtheorem*{rem}{Remark}
\newtheorem*{conv}{Convention}

\newtheorem*{claim*}{Claim}

\newtheorem{claim}{Claim}

%\newcommand{\thmitem}{\setlength{\itemsep}{0mm}}

%%%%%%%%%%%%%%%%%%%%%%%%%%%%%%%%%%%%%%%%%
%
% theorem package is incompatible with ams document classes.
%
%\usepackage{theorem}
%\theoremheaderfont{\scshape} % need \usepackage{theorem}
%\theorembodyfont{\slshape} % need \usepackage{theorem}
%\theorembodyfont{\itshape} % need \usepackage{theorem}
%
%%%%%%%%%%%%%%%%%%%%%%%%%%%%%%%%%%%%%%%%%

%
% MyCommands.tex
%
% To be included in every math document.
% Includes all (local) mathematical commands that I use.
%

%%%%%%%%%%%%%%%%%%%%%%%%%%%%%%%%%%%%%%%%%
%%%%%%%%%%%%%%%%%%%%%%%%%%%%%%%%%%%%%%%%%
%
% Mathematics Letters
%
%%%%%%%%%%%%%%%%%%%%%%%%%%%%%%%%%%%%%%%%%
%%%%%%%%%%%%%%%%%%%%%%%%%%%%%%%%%%%%%%%%%

%%%%%%%%%%%%%%
% Blackboard Bold Letters
%%%%%%%%%%%%%%

\newcommand{\bbC}{\mathbb{C}}

\newcommand{\bbR}{\mathbb{R}}

\newcommand{\bbZ}{\mathbb{Z}}

%%%%%%%%%%%%%%%%%%%%%%%%%%%%
% Calligraphy Letters (e.g. for Mapping Class Groups)
%%%%%%%%%%%%%%%%%%%%%%%%%%%%

\newcommand{\calA}{\mathcal{A}}

\newcommand{\calC}{\mathcal{C}}

\newcommand{\calG}{\mathcal{G}}

\newcommand{\calS}{\mathcal{S}}

%%%%%%%%%%%%%%%%%%%%%%%%
% Bold Capital Letters (e.g. for Special Spaces)
%%%%%%%%%%%%%%%%%%%%%%%%

\newcommand{\bfH}{\mathbf{H}}

 % Johnson Space

 % Lagrangian? Space
 % Moduli Space

 % Representation Space

 % Torelli Space

 % Weil Space

%%%%%%%%%%%%%%%%%%%%%%%%%%%%%%
% Bold Lower-Case Letters (e.g. for (co)homology classes)
%%%%%%%%%%%%%%%%%%%%%%%%%%%%%%

 % Chern class

 % Euler class

 % Obstruction class

 % Stiefel-Whitney class

%%%%%%%%%%%%%%%%%%%%%%%%%
% Roman Capital Letters (e.g. for Special Spaces)
%%%%%%%%%%%%%%%%%%%%%%%%%

\newcommand{\rmA}{\mathrm{A}}

 % Johnson Space

 % Lagrangian? Space
 % Moduli Space

 % Representation Space

 % Torelli Space

 % Weil Space

%%%%%%%%%%%%%%%%%%%%%%%%%%%%%%%
% Roman Lower-Case Letters (e.g. for (co)homology classes)
%%%%%%%%%%%%%%%%%%%%%%%%%%%%%%%

 % Chern class

 % Euler class

 % Obstruction class

 % Stiefel-Whitney class

%%%%%%%%%%%%%%%%%%%%%%%%%%%%%%
% Frak Upper-Case Letters (e.g. for Lie algebras)
%%%%%%%%%%%%%%%%%%%%%%%%%%%%%%

%%%%%%%%%%%%%%%%%%%%%%%%%%%%%%
% Frak Lower-Case Letters (e.g. for Lie algebras)
%%%%%%%%%%%%%%%%%%%%%%%%%%%%%%

%%%%%%%%%%%%%%%%%%%%%%%%%%%%%%
% Script Letters (e.g. for PML)
%%%%%%%%%%%%%%%%%%%%%%%%%%%%%%

\newcommand{\scrJ}{\mathscr{J}}

\newcommand{\scrL}{\mathscr{L}}

\newcommand{\scrT}{\mathscr{T}}

%%%%%%%%%%%%%%%%%%%%%%%%%%%%%%
% Script Letters (e.g. for MML)
%%%%%%%%%%%%%%%%%%%%%%%%%%%%%%

%%%%%%%%%%%%%%%%%%%%%%%%%%%%%%%%%%%%%%%%%
%%%%%%%%%%%%%%%%%%%%%%%%%%%%%%%%%%%%%%%%%
%
% Mathematics Commands
%
%%%%%%%%%%%%%%%%%%%%%%%%%%%%%%%%%%%%%%%%%
%%%%%%%%%%%%%%%%%%%%%%%%%%%%%%%%%%%%%%%%%

%%%%%%%%%%%%
% Some Essential Stuff
%%%%%%%%%%%%

\newcommand{\nil}{\varnothing}

% The following are used way too often. So, I made up even shorter commands.

\newcommand{\Z}{\bbZ}

\newcommand{\R}{\bbR}
\newcommand{\C}{\bbC}

	% NOTE:
	% \O and \H could not be used.
	% Use \bbH and \bbO for quaternions and octonions.
	% the field can be denoted by \Bbbk

%%%%%%%%%%
% Hom and others
%%%%%%%%%%

% Standard Stuff
 % Homomorphism
 % Kernel
 % Image (note that \Im gives gothic "imaginary part" letter)

\newcommand{\Aut}{\operatorname{Aut}} % Automorphisms
 % Symmetric (i.e. Permutation) Group

 % Endomorphism

 %Trace
 % trace

% Group Theory
 % Inner Automorphism
\newcommand{\Out}{\operatorname{Out}} % Outer Automorphism
\newcommand{\Stab}{\operatorname{Stab}} % Stabilizer or Stabilized
 % Orbit

% Module Theory and Homological Algebra
 % Extension (e.g. in Universal Coefficient Theorem)
 % Torsion (e.g. in Universal Coefficient Theorem)
 % Annihilator

% Algebra
 % Derivation
 % characteristic for fields

% Geometry & Topology

\newcommand{\Homeo}{\operatorname{Homeo}}

 % Poincar\'e Duality

 % Can also be used for divisors

%\newcommand{\div}{\operatorname{div}} % illegal... already defined for something

 % De Rham

\newcommand{\Fix}{\operatorname{Fix}}

% Moduli space type stuff

 % Deligne-Mumford compactification
 % Universal family over Deligne-Mumford compactification
 % Deligne-Mumford compactification
 % Universal family over Deligne-Mumford compactification
 % Deligne-Mumford compactification
 % Universal family over Deligne-Mumford compactification

\newcommand{\Mod}{\operatorname{Mod}}
\newcommand{\PMod}{\operatorname{PMod}}

% Torelli space is \bfT?
% Principally poloarized abelian variety is \bfA?
% Johnson space is \bfK?

% Complex Analysis
 % Residue
 % Residue
 % Real Part.
	% NOTE: 
	% (1) \Ima can also be used for the imaginary. We defined it as an Image.
	% (2) \Re and \Im gives gothic like letters for real and imaginary.
	% NOTE:
	% Other symbols that are already in the package include...
	% Weierstrass P: \wp

%%%%%%%%%%%%%%%%%%%%%%%%
% Categories of manifolds (i.e. pseudogroups)
%%%%%%%%%%%%%%%%%%%%%%%%

	% \newcommand{\Diff}{\mathsf{Diff}} % Use \sfC instead

 % Differentiable. Use ^\infty, etc, to specify.
 % Cobordism Category

%%%%%%%%%%%
% Important Groups
%%%%%%%%%%%

\newcommand{\SL}{\operatorname{SL}}

\newcommand{\PSL}{\operatorname{PSL}}

	% NOTE:
	% Use \rmO and \rmU for Orthogonal and Unitary.

%%%%%%%%
% Lie Algebras
%%%%%%%%

%%%%%%%%%%%
% Important Spaces
%%%%%%%%%%%

\newcommand{\Teich}{\mathbf{Teich}}

%%%%%%%%%%%%%%%%%%%
% Model Geometries in Bold Letters
%%%%%%%%%%%%%%%%%%%

\newcommand{\Hyp}{\bfH}

%%%%%%%%%%%%%%%%%%%
% Measured Lamination, etc
%%%%%%%%%%%%%%%%%%%

\newcommand{\PML}{\mathscr{P\!\!M\!L}}
\newcommand{\ML}{\mathscr{M\!L}}

%%%%%%%%%%%%%%%%%%%
% Geometric Group Theory
%%%%%%%%%%%%%%%%%%%

%%%%%%%%%
% Random Stuff
%%%%%%%%%

 % Convex Hull
 % Frontier
 % CAT (Cartan-Alexander-Ty??)
 % minsize
 % minsize
 % minsize
 % open neighborhood
 % closed neighborhood
 % thick part
 % thin part

\newcommand{\interior}{\mathrm{int}} % interior
 % interior

% Commands Specific to my Berkeley Thesis
% In other words, something Homotopic and Vector Bundle stuff.

 % Evaluation Map

 % Steenrod operation
 % Bockstein?

 % Preferred ordering and orientation
 % Standard ordering and orientation
 % Projection

% Others

 % Means orbifold, I think. Not the orbit (\Orb)
 % pure braid/motion group, etc?
 % neat braid/motion group, etc?

 % projection in vector calculus

% Some Pictorial Symbols

%%%%%%%%%%%%%%%%%%%%%%%
% Calculus Stuff (especially for writing exams)
%%%%%%%%%%%%%%%%%%%%%%%

 % definite integral
 % definite integral

%%%%%%%%%%%%%%%%%%%%%%%%%%%%%%%%%%%%%%%%%
%
% Document Specific Commands
%
%%%%%%%%%%%%%%%%%%%%%%%%%%%%%%%%%%%%%%%%%

\newcommand{\red}{\operatorname{\rho}}
\newcommand{\surface}{\Sigma}

%%%%%%%%%%%%%%%%%%%%%%%%%%%%%%%%%%%%%%%%%
%%%%%%%%%%%%%%%%%%%%%%%%%%%%%%%%%%%%%%%%%
%
\begin{document} % Start of the document
%
%%%%%%%%%%%%%%%%%%%%%%%%%%%%%%%%%%%%%%%%%
%%%%%%%%%%%%%%%%%%%%%%%%%%%%%%%%%%%%%%%%%

%%%%%%%%%%%%%%%%%%%%%%%%%%%%%%%%%%%%%%%
%
% TOP MATTER
%
%%%%%%%%%%%%%%%%%%%%%%%%%%%%%%%%%%%%%%%

\title[The girth alternative for $\Mod(\Sigma)$]{The girth alternative for \\ mapping class groups}
\author[K. Nakamura]{Kei Nakamura}
\address{Department of Mathematics \\ Temple University \\ Philadelphia, PA 19122}
\email{kei.nakamura@temple.edu}

\begin{abstract}
The \emph{girth} of a finitely generated group $\Gamma$ is the supremum of the girth of Cayley graphs for $\Gamma$ over all finite generating sets. Let $\Gamma$ be a finitely generated subgroup of the mapping class group $\Mod(\surface)$, where $\surface$ is a compact orientable surface. Then, either $\Gamma$ is virtually abelian or it has infinite girth; moreover, if we assume that $\Gamma$ is not infinite cyclic, these alternatives are mutually exclusive.
\end{abstract}

%%%%%%%%%%%%%%%%%%%%%%%%%%%%%%%%%%%%%%%
%
% MAIN MATTER
%
%%%%%%%%%%%%%%%%%%%%%%%%%%%%%%%%%%%%%%%

\maketitle
%\tableofcontents

%%%%%%%%%%%%%%%%%%%%%%%%%%%%%%%%%%%%%%%
%
\section{Introduction}
\label{Introduction}
%
%%%%%%%%%%%%%%%%%%%%%%%%%%%%%%%%%%%%%%%

Let $\surface$ be a compact orientable surface, which is possibly disconnected. The \emph{mapping class group} of $\surface$, denoted by $\Mod(\surface)$, is the group of isotopy classes of $\Homeo^+(\surface)$, i.e. the group of orientation preserving homeomorphisms of $\surface$. Mapping class groups have been studied extensively in complex analysis, low-dimensional topology, and geometric group theory for more than a century.

A fascinating aspect of the mapping class groups is that they share many properties with lattices in semi-simple Lie groups. One analogy between linear groups and mapping class groups can be seen in the following famous dichotomy regarding their subgroups, which has become known as the \emph{Tits-alternative} for linear groups and mapping class groups respectively.

\begin{thm}[\cite{Tits:FreeSubgrp}] \label{tits-alt: linear} % c.f. \cite{Dixon:Tits}
Let $\Bbbk$ be a field, and let $\Gamma$ be a finitely generated subgroup of $GL(n,\Bbbk)$. Then, $\Gamma$ either contains a non-abelian free subgroup or is virtually solvable; moreover, these alternatives are mutually exclusive.
\end{thm}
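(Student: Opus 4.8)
The plan is to separate the two directions, disposing of mutual exclusivity first. A non-abelian free group $F_2$ is never virtually solvable: by Nielsen--Schreier every finite-index subgroup of $F_2$ is free of rank at least two, hence non-solvable, so no solvable subgroup has finite index. As any subgroup of a virtually solvable group is again virtually solvable, a virtually solvable $\Gamma$ cannot contain $F_2$. All the content therefore lies in the converse: if $\Gamma$ is \emph{not} virtually solvable, then it contains a copy of $F_2$.

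For that direction I would build a rank-two free subgroup via the ping-pong (table-tennis) lemma acting on a projective space, reducing the problem to exhibiting two elements of $\Gamma$ with suitable north--south dynamics in general position. First I make the standard reductions: as $\Gamma$ is finitely generated I may replace $\bbk$ by the subfield generated by the matrix entries, so $\bbk$ is a finitely generated field; and I pass to the Zariski closure $G = \overline{\Gamma}$, in which $\Gamma$ is dense. Since $\Gamma$ is not virtually solvable neither is $G$, so the identity component $G^{\circ}$ is non-solvable and thus surjects, up to isogeny, onto a simple algebraic group. Choosing an appropriate strongly irreducible representation of this quotient, I reduce to the case where $\Gamma$ acts strongly irreducibly on a vector space $V$ over $\bbk$.

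The heart of the argument is to manufacture a \emph{proximal} element --- one with a unique eigenvalue of strictly maximal absolute value, which on $\bbP(V)$ then has an attracting fixed point and a complementary repelling hyperplane whose high powers crush the complement of the hyperplane into a small neighborhood of the fixed point. Non-solvability of $G^{\circ}$ furnishes an element of $\Gamma$ with an eigenvalue $\lambda$ that is not a root of unity; the crucial number-theoretic lemma is that a finitely generated field admits, for any such $\lambda$, an absolute value $|\cdot|_v$ --- archimedean, $p$-adic, or arising from a valuation in the transcendental case --- with $|\lambda|_v \neq 1$. Completing at $v$ to work over a local field, and replacing $V$ by a suitable exterior power $\wedge^{d} V$ to upgrade ``an eigenvalue of modulus $\neq 1$'' into a strictly dominant one, I obtain a genuine proximal element $g$.

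Finally, using strong irreducibility together with Zariski density I conjugate $g$ by a well-chosen $\gamma \in \Gamma$ so that the attracting point and repelling hyperplane of $\gamma g \gamma^{-1}$ are in general position relative to those of $g$; passing to high powers $a = g^{N}$ and $b = (\gamma g \gamma^{-1})^{N}$ then makes the four ping-pong regions disjoint, and the table-tennis lemma yields $\langle a, b \rangle \cong F_2$. I expect the decisive obstacle to be precisely the proximality step: securing an absolute value with $|\lambda|_v \neq 1$ over an arbitrary finitely generated field --- above all in positive characteristic, where there are no archimedean places and one must rely entirely on valuation theory --- together with the exterior-power bookkeeping needed to turn such an eigenvalue into a strictly dominant one.
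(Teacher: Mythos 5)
This statement is Theorem~1 of the paper, quoted from Tits's original article and not proved here, so there is no in-paper argument to measure your attempt against. Judged on its own terms, your outline is an accurate summary of Tits's actual strategy: the mutual-exclusivity half is complete and correct (Nielsen--Schreier gives that every finite-index subgroup of $F_2$ is free of rank $\geq 2$, hence non-solvable, and subgroups of virtually solvable groups are virtually solvable), and the existence half follows the standard route --- restrict to the finitely generated field generated by the matrix entries, pass to the Zariski closure and a strongly irreducible representation of a simple quotient, produce an eigenvalue that is not a root of unity, find a place of the field at which it has absolute value different from $1$, pass to an exterior power to obtain a proximal element over the resulting local field, and play ping-pong on projective space with two conjugates in general position. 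Be aware, however, that what you have written is a road map rather than a proof: the steps you yourself flag as the decisive obstacles --- the valuation-theoretic lemma for finitely generated fields (especially in positive characteristic, where the finite-generation hypothesis is genuinely needed), the exterior-power bookkeeping that upgrades an eigenvalue of absolute value $\neq 1$ to a strictly dominant one, and, just as seriously, the claim that non-virtual-solvability furnishes an element with an eigenvalue of infinite multiplicative order --- are exactly where the real work of Tits's paper lies, and none of them is carried out. Within the present paper the theorem serves purely as motivation for the analogous alternative for mapping class groups, whose proof proceeds by entirely different, surface-theoretic means (Thurston's theory and the action on $\PML(\surface)$), so your sketch, while faithful to Tits, sheds no light on the arguments actually developed here.
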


\begin{thm}[\cite{Ivanov:Tits-Margulis-Soifer}, \cite{McCarthy:Tits}] \label{tits-alt: mcg}
Let $\surface$ be a compact orientable surface, and let $\Gamma$ be a finitely generated subgroup of $\Mod(\surface)$. Then, $\Gamma$ either contains a non-abelian free subgroup or is virtually abelian; moreover, these alternatives are mutually exclusive.
\end{thm}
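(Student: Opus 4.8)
The plan is to follow the dynamical approach of Ivanov and McCarthy: use Thurston's classification of mapping classes together with a ping-pong argument on the space of projective measured foliations $\PML(\surface)$, and an induction on the complexity of $\surface$. Mutual exclusivity is immediate, since a subgroup of a virtually abelian group is again virtually abelian, hence amenable, and so cannot contain a non-abelian free group. For the substantive dichotomy, recall Thurston's trichotomy: every $f \in \Mod(\surface)$ is periodic, reducible, or pseudo-Anosov, and each pseudo-Anosov $f$ acts on $\PML(\surface)$ with north--south dynamics, fixing exactly the projective classes $[\calF^u_f]$ and $[\calF^s_f]$ of its unstable and stable foliations. Call two pseudo-Anosov elements $f,g$ \emph{independent} if $\{[\calF^u_f],[\calF^s_f]\} \cap \{[\calF^u_g],[\calF^s_g]\} = \nil$. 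The engine of the proof is the ping-pong lemma applied to $\PML(\surface)$: if $\Gamma$ contains two independent pseudo-Anosov elements, then north--south dynamics shows that for all sufficiently large $N$ the powers $f^N$ and $g^N$ satisfy the ping-pong hypotheses (each high power carries the complement of a small neighborhood of its own fixed pair into a small neighborhood of the other fixed pair), whence $\langle f^N, g^N\rangle$ is free of rank two.

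Next I would treat the \emph{irreducible} case, where $\Gamma$ does not virtually fix the isotopy class of any essential multicurve. By Ivanov's structure theory, an infinite irreducible subgroup contains a pseudo-Anosov element $f$; fix one such. If some $h \in \Gamma$ moves the unordered fixed pair $\{[\calF^u_f],[\calF^s_f]\}$ off itself, then $hfh^{-1}$ is pseudo-Anosov with a disjoint fixed pair, and after passing to high powers to separate the fixed sets, $f$ and $hfh^{-1}$ become independent, producing a free group. Otherwise every element of $\Gamma$ fixes this unordered pair, so $\Gamma$ lies in the stabilizer $\Stab\{[\calF^u_f],[\calF^s_f]\}$, which is virtually infinite cyclic: the homomorphism recording the logarithm of the dilatation has virtually cyclic image and finite kernel. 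In that case $\Gamma$ is virtually abelian (indeed virtually cyclic), as desired.

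Then I would treat the \emph{reducible} case by induction on the complexity of $\surface$. If $\Gamma$ virtually fixes a multicurve, pass to a finite-index subgroup $\Gamma_0$ fixing the isotopy class of a nonempty multicurve $\calC$ (for instance, a canonical reduction system). Cutting $\surface$ along $\calC$ yields a surface $\surface'$ of strictly smaller complexity together with a homomorphism $\Gamma_0 \to \Mod(\surface')$ whose kernel is contained in the free abelian group generated by the Dehn twists about the components of $\calC$. The image in each connected factor of $\Mod(\surface')$ is finitely generated, so by the inductive hypothesis each factor either contains a non-abelian free group---which then lifts into $\Gamma_0 \le \Gamma$---or is virtually abelian. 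If no free group appears, the image of $\Gamma_0$ is virtually abelian, and since the kernel is abelian, $\Gamma_0$, and hence $\Gamma$, is virtually abelian. The base cases are the low-complexity surfaces, where $\Mod(\surface)$ is finite or virtually cyclic and the statement is immediate.

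The main obstacle is the irreducible case: producing two \emph{independent} pseudo-Anosov elements from the mere failure of reducibility, and rigorously verifying the ping-pong hypotheses through the north--south dynamics on $\PML(\surface)$. Establishing that the stabilizer of a pseudo-Anosov fixed pair is virtually cyclic---that is, controlling the dilatation homomorphism and the finiteness of its kernel---is the other delicate point, and it is what pins down the virtually abelian alternative precisely rather than merely bounding the group from above.
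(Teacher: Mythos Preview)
The paper does not give its own proof of this theorem; it is quoted as background from Ivanov and McCarthy, and the machinery of \S\ref{Elements}--\S\ref{Subgroups} is developed for the girth alternative rather than to reprove Theorem~\ref{tits-alt: mcg}. Your outline is the standard Ivanov--McCarthy strategy and is broadly correct in the irreducible case, with one slip: passing to high powers of a pseudo-Anosov element does not ``separate'' fixed sets, since $\Fix(f^N)=\Fix(f)$. What you actually need is the dichotomy (Lemma~\ref{fixed-point sets of pseudo-Anosov} here) that two pseudo-Anosov elements in $\Gamma_{(m)}$ have either identical or disjoint fixed sets in $\PML(\surface)$; once $h$ moves the unordered pair $\{[\calF^u_f],[\calF^s_f]\}$ at all, this forces $\Fix(hfh^{-1})\cap\Fix(f)=\nil$, and then the high powers are used only for the ping-pong inclusions, not for independence.

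The genuine gap is in your reducible step. You write: ``the image of $\Gamma_0$ is virtually abelian, and since the kernel is abelian, $\Gamma_0$, and hence $\Gamma$, is virtually abelian.'' That inference is false in general: an extension of an abelian group by a virtually abelian group is only virtually \emph{solvable} (the integer Heisenberg group is $\Z$-by-$\Z^2$ and not virtually abelian). What makes the conclusion hold inside $\Mod(\surface)$ is an additional structural fact---either the Birman--Lubotzky--McCarthy theorem that every solvable subgroup of $\Mod(\surface)$ is virtually abelian, or the direct statement recorded here as Lemma~\ref{vAb: red} (from \cite[\S8.9]{Ivanov:MCGBook}) that $\Gamma$ is virtually free-abelian iff $\red_{\calC}(\Gamma)$ is. Without invoking that input, your induction only yields ``virtually solvable'', which is the conclusion of Theorem~\ref{tits-alt: linear} for linear groups but strictly weaker than what Theorem~\ref{tits-alt: mcg} asserts. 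A secondary issue in the same step is that ``each factor image virtually abelian'' does not automatically make the image in $\Mod(\surface')$ virtually abelian, since elements of $\Gamma_0$ may permute components of $\surface'$; the paper handles this via the partition of Lemma~\ref{partition: adeq-red} and the restriction argument of \S\ref{Adeq-Red: Disconn}.
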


The dichotomy in the Tits alternative has been further investigated, and some refinements and variations are known for linear groups and mapping class groups; see, for example, the work of Margulis and So\u{\i}fer on maximal subgroups of linear groups \cite{Margulis--Soifer:MaximalSubgrp} and the analogous result by Ivanov for mapping class groups \cite{Ivanov:Tits-Margulis-Soifer}. The purpose of this article is to demonstrate that the structural analogy between these groups can be reinterpreted in terms of the \emph{girth} of finitely generated groups.

Recall that the \emph{girth} of a graph is the length of the shortest graph cycle, if any, in the graph. In \cite{Schleimer:Girth}, Schleimer defined the \emph{girth} of a finitely generated group $\Gamma$ to be the supremum of the girth of Cayley graphs of $\Gamma$ over all finite generating sets. By the observations of Schleimer in \cite{Schleimer:Girth} and the subsequent work of Akhmedov in \cite{Akhmedov:Girth1} and \cite{Akhmedov:Girth2}, there appears to be a significant qualitative difference between groups with finite girth and groups with infinite girth. In particular, Akhmedov gave the following ``\emph{girth alternative''} for linear groups, which shows that the division essentially coincides with the dichotomy in the Tits alternative for linear groups.

\begin{thm} [\cite{Akhmedov:Girth2}] \label{girth-alt: linear}
Let $\Bbbk$ be a field, and let $\Gamma$ be a finitely generated subgroup of $GL(n,\Bbbk)$. Then, $\Gamma$ is either a non-cyclic group with infinite girth or a virtually solvable group; moreover, these alternatives are mutually exclusive.
\end{thm}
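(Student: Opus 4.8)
The plan is to obtain the dichotomy directly from the Tits alternative (Theorem~\ref{tits-alt: linear}) and then to establish the two alternatives by opposite mechanisms. By Theorem~\ref{tits-alt: linear}, a finitely generated $\Gamma \le \GL(n,\Bbbk)$ either is virtually solvable, placing it in the second alternative, or contains a non-abelian free subgroup $F_2$; in the latter case $\Gamma$ is manifestly non-cyclic, so everything reduces to two claims: \emph{(i)} if $\Gamma$ contains $F_2$ then $\Gamma$ has infinite girth, and \emph{(ii)} a non-cyclic virtually solvable group has finite girth. Claim \emph{(ii)} also supplies the mutual exclusivity, since it asserts that no non-cyclic group can be simultaneously virtually solvable and of infinite girth; the infinite cyclic group $\bbZ$, which is virtually solvable and of infinite girth, is excluded from the first alternative precisely by the non-cyclicity hypothesis.

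For claim \emph{(ii)} I would isolate the elementary principle that a finitely generated group satisfying a nontrivial law has finite girth \emph{unless} it is infinite cyclic: substituting generators into a law of bounded length produces, in \emph{every} Cayley graph, a closed edge-path of bounded length, and non-cyclicity guarantees that one may choose the substitution so that this path does not reduce to a point, hence is a genuine short cycle, so that the girth stays bounded even after taking the supremum over generating sets. A non-cyclic virtually solvable linear group $\Gamma$ is then handled by structure theory: by the Lie--Kolchin theorem, applied over $\overline{\Bbbk}$ to a finite-index subgroup, $\Gamma$ is virtually triangularizable, hence virtually solvable of derived length $\le n$, and so a finite-index subgroup satisfies an iterated-commutator law. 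The delicate point is exactly this ``virtually'': one must verify that a bounded-index extension of a law-satisfying group cannot manufacture arbitrarily long relations in all generating sets, for which I would track how a fixed transversal of the finite-index subgroup interacts with a given generating set and absorb the resulting correction into a bounded increase of cycle length.

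The substance of the theorem is claim \emph{(i)}, the production of generating sets of arbitrarily large girth, for which I would use linearity to upgrade the abstract free subgroup into a dynamical one. Following Tits, after embedding $\Bbbk$ into a suitable local field one finds in $\Gamma$ a proximal element and, more generally, free generators $a,b$ acting in general position on the projective space $\mathbb{P}^{n-1}$, so that high powers of independent words contract large regions toward isolated attracting points (ping-pong). Given an arbitrary finite generating set and a target $N$, I would first enlarge the set so that it contains such a free pair $a,b$ (legitimate, since the girth is a supremum over \emph{all} finite generating sets), and then replace each remaining generator $g_j$ by $t_j = g_j w_j$, where $w_j$ is a long free word in $a,b$ with exponents $M \gg N$. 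The set $\{a,b,t_3,\ldots,t_m\}$ still generates $\Gamma$, since $g_j = t_j w_j^{-1}$; and any reduced word in these generators of length $\le N$, when expanded, is dominated by its long free blocks $w_j^{\pm 1}$, so that its action on $\mathbb{P}^{n-1}$ carries a chosen attracting point outside its own neighborhood and the word is therefore nontrivial in $\Gamma$. Hence this Cayley graph has girth exceeding $N$, and letting $N \to \infty$ yields infinite girth.

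The main obstacle I anticipate is the simultaneous control required in claim \emph{(i)}: the modified generators $t_j$ must both generate the whole group and avoid short relations, and these demands pull in opposite directions, because the arbitrary factors $g_j$ can move the ping-pong neighborhoods unpredictably. Overcoming this is exactly what the proximality input buys — once the exponents $M$ are taken large relative to the displacement of the chosen neighborhoods under the finitely many $g_j$, the contraction dynamics of the free part dominates the bounded distortion introduced by the $g_j$, so that no short word can return to the identity. Making this domination quantitative, by choosing $M$ as an explicit function of $N$ and of the behavior of the $g_j$ on the selected neighborhoods, is the technical core of the argument.
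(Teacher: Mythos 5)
The paper does not actually prove this theorem: it is quoted from \cite{Akhmedov:Girth2}, and the only trace of its proof in the text is Proposition~\ref{igc} (the Infinite Girth Criterion), which the author presents as a reformulation of Akhmedov's method. Your architecture matches that method — Tits alternative for the dichotomy, Schleimer's law criterion together with finite-index stability of finite girth for the solvable side, and ping-pong on a modified generating set for the other side; your $t_j=g_jw_j$ is a one-sided variant of the paper's $\hat\gamma_i=\sigma^{p_i}\gamma_i\tau^{-p_i}$. But your claim \emph{(i)}, ``if $\Gamma$ contains $F_2$ then $\Gamma$ has infinite girth,'' is false as a general statement, and the paper says so explicitly: Olshanskii's groups contain non-abelian free subgroups yet have finite girth. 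What the argument really needs is not an abstract free subgroup but a free pair whose attracting/repelling data is compatible with the \emph{given} generating set — exactly conditions (1)--(3) of Proposition~\ref{igc} — so the reduction announced in your first paragraph is wrong, and the proximality input is not an optional upgrade of the Tits alternative but the entire content of the theorem.

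The second, more serious gap is that this compatibility cannot be purchased by ``taking $M$ large relative to the displacement of the neighborhoods under the $g_j$.'' If some $g_j$ carries the attracting fixed point of $a$ into the repelling set of $b$, then no power of $b$ pushes $b^Mg_ja^M(x)$ back toward $b$'s attractor: the obstruction lies in the position of the fixed points, not in the size of the exponents. One must first choose the free pair so that $\Fix(b)$ is disjoint from $g_j^{\pm1}(\Fix(a))$ for all $j$ (and symmetrically), which requires a separate ``many proximal elements in general position'' lemma; in the paper's mapping class group analogue this is precisely statement (2) of Theorem~\ref{trichotomy: adeq-red for conn}, applied to the collection $\{\sigma\}\cup\{\gamma_j^{\varepsilon}\sigma\gamma_j^{-\varepsilon}\}$ before any powers are taken. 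Only after that choice do large exponents, and your bookkeeping of cancellations, finish the job. A smaller point on claim \emph{(ii)}: if the finite-index solvable subgroup happens to be infinite cyclic, the law criterion does not apply to it (it is excluded by its own hypothesis), so the non-cyclic virtually-cyclic case needs a separate two lines rather than being absorbed into the Lie--Kolchin argument.
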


Our main result is the following analogous girth alternative for mapping class groups $\Mod(\surface)$, showing that the division of subgroups into the ones with finite girth and the ones with infinite girth again coincides with the dichotomy in the Tits alternative.

\begin{thm} \label{girth-alt: mcg}
Let $\surface$ be a compact orientable surface, and let $\Gamma$ be a finitely generated subgroup of $\Mod(\surface)$. Then, $\Gamma$ is either a non-cyclic group with infinite girth or a virtually abelian group; moreover, these alternatives are mutually exclusive.
\end{thm}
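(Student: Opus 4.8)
The plan is to prove the two directions of the alternative and then the mutual exclusivity.

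First I would establish the mutual exclusivity, since it is the easier half and clarifies the structure. A virtually abelian group that is not infinite cyclic must have finite girth: this is essentially Schleimer's observation that finitely generated abelian groups (other than $\Z$ and finite groups) have finite girth, together with the fact that girth is a commensurability invariant for non-cyclic groups. So if $\Gamma$ is virtually abelian and not infinite cyclic, it cannot also have infinite girth. The infinite-cyclic case is exactly the one excluded from the exclusivity claim, because $\Z = \Mod$ of an annulus generated by a Dehn twist has infinite girth trivially yet is (virtually) abelian.

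\begin{proof}[Plan for the main direction]
For the substantive direction, suppose $\Gamma$ is \emph{not} virtually abelian; I must show $\Gamma$ has infinite girth. By the Tits alternative for mapping class groups (Theorem~\ref{tits-alt: mcg}), $\Gamma$ contains a non-abelian free subgroup, so $\Gamma$ is in particular non-cyclic. The strategy, following Akhmedov's method for linear groups, is a \emph{ping-pong} argument carried out uniformly over generating sets: to exhibit infinite girth I would show that for every $n$ there is a finite generating set $S_n$ of $\Gamma$ whose Cayley graph has girth exceeding $n$, equivalently that given any finite generating set $S$ and any $n$, one can build from $S$ a new generating set realizing no short relations. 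Concretely, I would fix two independent pseudo-Anosov elements generating a free subgroup, use the north--south dynamics of pseudo-Anosov maps on the space of projective measured foliations $\PML(\surface)$ to set up ping-pong attracting/repelling neighborhoods, and conjugate the elements of $S$ by sufficiently high powers so that the new generators act with disjoint ping-pong domains. A product of the new generators that is short (length $\le n$) is then forced by the ping-pong lemma to be nontrivial, so the corresponding Cayley graph has no cycle of length $\le n$.
\end{proof}

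The key steps, in order, are: (1) extract a non-abelian free subgroup and, more importantly, a pair of independent pseudo-Anosov elements whose dynamics on $\PML(\surface)$ provide the ping-pong data; (2) reduce the general (non-pseudo-Anosov-rich) case to this one by passing to the canonical reduction system and the induced action on subsurfaces, since an arbitrary non-virtually-abelian subgroup of $\Mod(\surface)$ need not contain pseudo-Anosov elements until one restricts to an appropriate invariant subsurface where its image does; (3) carry out the conjugation-and-ping-pong construction uniformly in the word length bound $n$, producing for each $n$ a generating set free of relations up to length $n$. The main obstacle I expect is step (2): handling reducible subgroups whose action is concentrated on proper subsurfaces or on components permuted nontrivially, where there is no single $\PML$ on which all elements act with source-sink dynamics. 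Dealing with this requires an induction on the complexity of $\surface$ (number of components, genus, punctures), at each stage either finding a pseudo-Anosov on some piece or descending to the mapping class group of a simpler surface via the reduction, while ensuring the infinite-girth conclusion is inherited. The pseudo-Anosov ping-pong itself is the clean, expected core; the bookkeeping to guarantee the dynamical setup survives for \emph{all} finite generating sets simultaneously, and to control the permutation action on components, is where the real work lies.
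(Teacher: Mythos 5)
Your outline correctly identifies the core mechanism --- Akhmedov-style ping-pong in which the generators are conjugated by high powers of a pseudo-Anosov pair acting with north--south dynamics on $\PML(\surface)$ --- and correctly flags the reducible case as the place where the real work happens. Two refinements you gloss over but would need: the second pseudo-Anosov $\tau$ cannot be fixed in advance; it must be chosen \emph{after} the generating set $\calG=\{\gamma_1,\dots,\gamma_n\}$ so that $\Fix(\tau)$ is disjoint from $\Fix(\gamma_j^{\varepsilon}\sigma\gamma_j^{-\varepsilon})$ for all $j$ and $\varepsilon=\pm1$ (this is what makes the hypotheses of the infinite-girth ping-pong criterion, which involve the translates $\gamma_j^{\varepsilon}(U_\sigma)$ and $\gamma_j^{\varepsilon}(U_\tau)$, verifiable), and this requires a strengthening of Ivanov's ``two independent pseudo-Anosovs'' lemma to: given any finite collection of pseudo-Anosovs in $\Gamma_{(m)}$, there is another one whose fixed set misses all of theirs.

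The genuine gap is in your step (2). You assert that ``girth is a commensurability invariant for non-cyclic groups'' and later plan to descend to simpler surfaces ``while ensuring the infinite-girth conclusion is inherited.'' Commensurability invariance of \emph{infinite} girth is precisely what is not known --- it is an open problem whether a group with a finite-index infinite-girth subgroup must itself have infinite girth --- so any reduction that passes to a finite-index subgroup (e.g.\ to the pure subgroup $\Gamma_{(m)}$, or to the finite-index subgroup preserving each component of $\surface$ or each piece of the cut surface) and proves infinite girth there does not come back up to $\Gamma$. The transfer must instead go through \emph{surjections}: Akhmedov's result that a group surjecting onto a non-cyclic group of infinite girth itself has infinite girth. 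This is why the argument is organized around the reduction homomorphism $\red_{\calC}:\Gamma\to\Mod(\surface_{\calC})$ along the canonical reduction system and around restriction of all of $\Gamma$ (not of a finite-index subgroup) to a $\Gamma$-invariant union of components --- both are quotients of $\Gamma$. Making the relevant subsurface $\Gamma$-invariant, rather than only invariant under a finite-index subgroup, takes an extra argument (the restrictions of $\Gamma_{(m)}$ to components permuted by $\Gamma$ are isomorphic, so the partition of components by the isomorphism type of that restriction is $\Gamma$-invariant). Your proposed ``induction on the complexity of $\surface$'' is not needed once this is in place: a single cut along the canonical reduction system lands in an adequately reduced group, and one invariant partition of its components isolates the piece carrying the pseudo-Anosov dynamics. (Your exclusivity argument is fine, but for the correct reason --- a virtually abelian, non-infinite-cyclic group has finite girth because finite girth \emph{does} pass up from finite-index subgroups, or because such a group satisfies a law --- not because girth is a commensurability invariant.)
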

For \emph{irreducible} subgroups of mapping class groups $\Mod(\surface)$, the result was independently proved by Yamagata \cite{Yamagata:Girth}; see \S 4 for the definition of irreducible subgroups.

One difficulty in proving Theorem~\ref{girth-alt: mcg} is that we cannot simply pass the statement to a finite index normal subgroup; unfortunately, we don't know that a group has infinite girth even if it has a finite index normal subgroup with infinite girth. This requires us to study the structure of $\Gamma$ carefully when $\surface$ is disconnected or when $\Gamma$ is \emph{reducible}.

In the general context of finitely generated groups, the existence of a non-abelian free subgroup in $\Gamma$ does not imply that $\Gamma$ has infinite girth. Hence, the girth alternative is not a mere consequence of the Tits alternative. As we shall see, for subgroups of mapping class groups, the girth alternative is a slightly more intricate manifestation of underlying structural properties that are responsible for the Tits alternative. A part of the arguments in the proof of Theorem~\ref{girth-alt: mcg} can also be used to show the girth alternative for convergence groups (see \cite{Nakamura:Thesis}, \cite{Yamagata:Girth}) and subgroups of $\Out(F_n)$ that contain an irreducible element with irreducible powers (see \cite{Nakamura:Thesis}).

%We remark that our Infinite Girth Criteion is quite versatile and general, and can be applied to many other classes of groups where Tits alternative is known to hold. Other than the groups which Akhmedov studied in \cite{Akhmedov:Girth2} and the mapping class groups we investigate in this article, our criterion can be used to study convergence groups, groups acting on trees, the fundamental groups of compact non-positively curved manifolds and other $\CAT(0)$ spaces, and the outer automorphism groups of non-abelian free groups. Some results on the girth of these groups are collected in \cite{Nakamura:Thesis}.

\subsection{Acknowledgement}
The results in this paper are contained in the Ph.D. dissertation by the author, submitted in 2008 to University of California, Davis \cite{Nakamura:Thesis}. The author would like to thank Dmitry Fuchs, Misha Kapovich, and his advisor Joel Hass for being the members of the dissertation committee, carefully reading the original exposition of this work, and providing insightful comments while the author was at University of California, Davis. The author would also like to thank David Futer and Igor Rivin for encouraging him to make this work publicly available in the present format.

\subsection{Outline}
We present some results on the girth of finitely generated groups in \S\ref{Criteria}. The main tool used in the proof of Theorem~\ref{girth-alt: mcg} is the Infinite Girth Criterion (Proposition~\ref{igc}) in \S\ref{Infinite Girth}, which reformulates and generalizes the work of Akhmedov in \cite{Akhmedov:Girth2}; see \cite{Yamagata:Girth} for a similar alternative reformulation. The essence of the proof of this criterion is a reminiscent of the classical \emph{ping-pong argument}, which goes back to the work of Blaschke, Klein, Schottky, and Poincar\'e on Schottky groups in $\PSL(2,\R)$ and $\PSL(2,\C)$; see, for example, \cite{Klein:Ping-Pong}. In \S\ref{Elements} and \S\ref{Subgroups}, we review the properties of elements and subgroups of mapping class groups from Thurston's theory \cite{Thurston:Surfaces} and the proof of the Tits alternative \cite{Birman-Lubotzky-McCarthy:AbSol}, \cite{Ivanov:Tits-Margulis-Soifer}, \cite{McCarthy:Tits}, \cite{Ivanov:MCGBook}, and also prove a few facts that are necessary for the application of Infinite Girth Criterion in the proof of Theorem~\ref{girth-alt: mcg}. Finally, in \S\ref{girth alternative}, we present the proof of Theorem~\ref{girth-alt: mcg}.

\subsection{Conventions}
%\begin{conv}
For various reasons, the mapping class group $\Mod(\surface)$ of a connected surface $\surface$ \emph{with boundary} is often defined in the literature to be the group of isotopy classes of $\Homeo^+(\surface,\partial \surface)$, consisting of homeomorphisms which take each component of $\partial \surface$ to itself; that is \emph{not} the convention we follow in this article. Our definition of $\Mod(\surface)$ as the group of isotopy classes of $\Homeo^+(\surface)$ implies that elements of $\Mod(\surface)$ may permute components of $\partial \surface$.

This convention coincides with the ones used in \cite{Birman-Lubotzky-McCarthy:AbSol}, \cite{McCarthy:Tits}, \cite{Ivanov:Tits-Margulis-Soifer}, and \cite{Ivanov:MCGBook}, where Thurston's theory on mapping class groups \cite{Thurston:Surfaces} was utilized in the studies of the structures of subgroups. One aspect of Thurston's theory involves a process of cutting the surface $\surface$ along an essential multi-loop which is invariant under the action of a mapping class and obtaining the induced mapping class on the resulted surface, say $\surface_{\calC}$; this new mapping class generally permutes the components of $\surface_{\calC}$ and $\partial \surface_{\calC}$, and the theory is developed most naturally under the convention we adopt in this article.
%\end{conv}

%%%%%%%%%%%%%%%%%%%%%%%%%%%%%%%%%%%%%%%
%
\section{Girth of Finitely Generated Groups}
\label{Criteria}
%
%%%%%%%%%%%%%%%%%%%%%%%%%%%%%%%%%%%%%%%

The \emph{girth} of a graph is the combinatorial length of the shortest cycle in the graph if there is a nontrivial cycle in the graph, and is set to be infinity if there is no cycle in the graph. Using the girth of Cayley graphs, Schleimer introduced in \cite{Schleimer:Girth} the notion of the \emph{girth} of a finitely generated group.

\begin{defn} \label{girth}
Let $\Gamma$ be a finitely generated group. Let $U(\Gamma, \calG)$ be the girth of the Cayley graph of $\Gamma$ with respect to a generating set $\calG$. The \emph{girth} of the group $\Gamma$ is defined to be $U(\Gamma):=\sup_\calG\{U(\Gamma,\calG)\}$, where the supremum is taken over all finite generating sets $\calG$ of $\Gamma$.
\end{defn}

\begin{conv}
Throughout the rest of the article, a group $\Gamma$ is assumed to be finitely generated unless otherwise stated.
\end{conv}

Clearly, every finite group has finite girth and any free group has infinite girth. It is also easy to see that an abelian group has finite girth unless it is the infinite cyclic group. In this section, we discuss some criteria for the girth of a group to be finite or infinite in a general setting. The Infinite Girth Criterion (Proposition~\ref{igc}) in \S\ref{Infinite Girth} is employed as the main tool in \S\ref{girth alternative}.

%%%%%%%%%%%%%%%%%%%%%%%%%%%%%%%%%%%%%%%
\subsection{Criteria for Finite Girth}
\label{Criteria: Finite Girth}
%%%%%%%%%%%%%%%%%%%%%%%%%%%%%%%%%%%%%%%

Recall that a group $\Gamma$ is said to satisfy a law if there is a word $w(x_1, \cdots , x_n)$ on $n$ letters such that $w(\gamma_1, \cdots , \gamma_n)=1$ in $\Gamma$ for any $\gamma_1, \cdots , \gamma_n \in \Gamma$. Schleimer obtained an important criterion for a group to have finite girth in \cite{Schleimer:Girth}.

\begin{thm}[\cite{Schleimer:Girth}] \label{law criteria}
A group $\Gamma$ has finite girth if it satisfies a law and is not infinite cyclic.
\end{thm}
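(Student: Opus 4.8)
The plan is to reinterpret the girth of a Cayley graph as the length of a shortest nontrivial relation, and then to use the law to manufacture such a relation of \emph{uniformly} bounded length, independent of the chosen generating set. For a finite generating set $\calG$, the quantity $U(\Gamma,\calG)$ is governed by the length of the shortest nonempty reduced word in the free group $F(\calG)$ that represents the identity in $\Gamma$. Thus, to bound $U(\Gamma)=\sup_\calG U(\Gamma,\calG)$, it suffices to exhibit, for every $\calG$, a single short relation that does not freely collapse to the empty word.

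First I would reduce the given law to a law in two variables. If $w(x_1,\dots,x_n)$ is the nontrivial reduced word witnessing the law, I choose words $u_1,\dots,u_n$ that freely generate a free subgroup of rank $n$ inside $F_2=\langle a,b\rangle$ (for instance $u_i=a^i b a^{-i}$). Since $x_i\mapsto u_i$ defines an embedding $F_n\hookrightarrow F_2$, the word $v(a,b):=w(u_1,\dots,u_n)$ is a nontrivial reduced element of $F_2$, say of length $L$, and $v(\gamma,\delta)=1$ for all $\gamma,\delta\in\Gamma$ because $w$ is a law. Hence $\Gamma$ satisfies the two-variable law $v(a,b)=1$, where $L$ depends only on $w$ and the fixed choice of the $u_i$.

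Now I would split on the nature of $\Gamma$. If $\Gamma$ is finite, every Cayley graph has at most $|\Gamma|$ vertices, so $U(\Gamma,\calG)\le|\Gamma|$ for all $\calG$ and $\Gamma$ has finite girth. If $\Gamma$ is infinite, then since $\Gamma$ is not infinite cyclic it is not cyclic at all, so every finite generating set $\calG$ contains two distinct elements $g\ne h$; as distinct members of $\calG$ they are distinct free generators of $F(\calG)$, so substituting $a\mapsto g$, $b\mapsto h$ merely relabels the free generators and $v(g,h)$ remains a nonempty reduced word of length $L$ in $F(\calG)$. Because $v$ is a law, $v(g,h)=1$ in $\Gamma$, so $v(g,h)$ is a nontrivial relation and $U(\Gamma,\calG)\le L$. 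Taking the supremum over $\calG$ gives $U(\Gamma)\le L<\infty$.

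The step I expect to be the main obstacle is guaranteeing \emph{non-collapse}: a naive substitution of generators into the law can freely reduce to the empty word, and such a degenerate relation yields no cycle. The two-variable reduction is precisely what removes this difficulty, since afterwards the substitution is a mere relabeling of free generators and cannot cancel. A secondary point requiring care is the convention by which a relation is counted as an essential cycle in the presence of torsion generators such as involutions; one should read $U(\Gamma,\calG)$ as the shortest relation length, so that a relation such as $g^2=1$ is recorded, and the uniform bound $L$ above then applies verbatim. The hypothesis that $\Gamma$ is not infinite cyclic enters only to rule out the single genuinely exceptional case $\Gamma\cong\Z$, whose Cayley graph on one generator is a line and has infinite girth.
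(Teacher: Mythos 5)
The paper states this result as Theorem~\ref{law criteria} and simply cites Schleimer without reproducing a proof, so there is nothing internal to compare against; your argument is correct and is essentially the standard one: reduce the given law to a two-variable law $v$ of fixed reduced length $L$ via an embedding $F_n\hookrightarrow F_2$ (using that the $a^iba^{-i}$ form a free basis), note that a group that is not infinite cyclic and is infinite cannot be cyclic, so every finite generating set contains two distinct generators, and substitute them for the two free generators to obtain a nontrivial reduced relation of length at most $L$ independent of the generating set. Your attention to the delicate points --- that non-collapse is automatic only after the two-variable reduction, that the finite case needs the separate bound $|\Gamma|$, and that $U(\Gamma,\calG)$ should be read as the length of the shortest nontrivial relation so that involutions and mutually inverse generators cause no trouble --- is exactly what is needed for the argument to close.
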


Together with the proposition below, Theorem~\ref{law criteria} provides a significant portion of the class of groups that are known to have finite girth.

\begin{prop}[\cite{Schleimer:Girth}] \label{schleimer criteria}
A group $\Gamma$ has finite girth if it satisfies one of the following conditions:
\begin{itemize}
\item $\Gamma$ contains a finite-index subgroup with finite girth.
\item $\Gamma$ admits a finite-kernel surjection onto a group with finite girth.
\end{itemize}
\end{prop}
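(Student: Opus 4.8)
The plan is to prove each clause by producing, for the relevant hypothesis, a single bound $N$ (depending only on the invariants in play) with $U(\Gamma,\calG)\le N$ for \emph{every} finite generating set $\calG$; this gives $U(\Gamma)=\sup_\calG U(\Gamma,\calG)\le N<\infty$. The mechanism I will use throughout is an elementary upper-bound principle: if $W$ is a nonempty word in $\calG^{\pm}$ that represents $1$ in $\Gamma$ but is not freely trivial (i.e.\ $W\ne 1$ in the free group $F$ on $\calG$), then cyclically reducing $W$ yields a genuine nontrivial cycle in the Cayley graph of length at most $|W|$, so $U(\Gamma,\calG)\le |W|$. Thus in each case it suffices to manufacture, from the given $\calG$, a word that is trivial in $\Gamma$ yet freely nontrivial, of length bounded independently of $\calG$. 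I assume $1\notin\calG$ without loss of generality, and I let $\phi\colon F\twoheadrightarrow\Gamma$ be the canonical surjection.

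For the finite-kernel clause, let $\psi\colon\Gamma\twoheadrightarrow Q$ have finite kernel $K$ with $|K|=m$ and $U(Q)=M<\infty$. Given $\calG$, if some $g\in\calG$ lies in $K$, then $g$ has finite order $o\le m$ and $g^{o}$ is already a freely nontrivial relation, so $U(\Gamma,\calG)\le m$. Otherwise $\psi(\calG)$ is a finite generating set of $Q$ by nontrivial elements, so $U(Q)\le M$ furnishes a cyclically reduced relation $s_1\cdots s_k=1$ in $Q$ with $k\le M$; choosing preimages $\tilde s_i\in\calG^{\pm}$ with $\psi(\tilde s_i)=s_i$ and concatenating gives $\tilde w=\tilde s_1\cdots\tilde s_k$. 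Any cancellation in $\tilde w$ would map under $\psi$ to a cancellation in $s_1\cdots s_k$, so $\tilde w$ is cyclically reduced and freely nontrivial, while $\psi(\tilde w)=1$ forces $\tilde w\in K$. If $\tilde w=1$ in $\Gamma$ we are done with bound $M$; otherwise $\tilde w$ has order $o\le m$ in $K$, and since $\tilde w$ is cyclically reduced the power $\tilde w^{o}$ stays freely nontrivial while dying in $\Gamma$, giving $U(\Gamma,\calG)\le Mm$. Hence $U(\Gamma)\le |K|\,U(Q)$.

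For the finite-index clause, set $n=[\Gamma:H]$ and $U(H)=M<\infty$; I may assume $H$ is infinite, since otherwise $\Gamma$ is finite and has finite girth. The key device is Reidemeister--Schreier applied to $\tilde H:=\phi^{-1}(H)\le F$, which has index $n$ in $F$. Fixing a prefix-closed (Schreier) transversal, whose representatives then have length $<n$, the associated Schreier generators are words in $\calG^{\pm}$ of length at most $2n-1$, and the \emph{nontrivial} ones form a \emph{free} basis of $\tilde H$. Their $\phi$-images generate $H$, so $U(H)\le M$ supplies a cyclically reduced relation $s_1\cdots s_k=1$ in $H$ of length $k\le M$ in these generators. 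Lifting each $s_i$ to the corresponding Schreier generator in the free basis of $\tilde H$ and concatenating yields a word $W$ of length at most $(2n-1)M$ in $\calG^{\pm}$. A cancellation between consecutive lifted Schreier letters would, under $\phi$, produce $s_{i+1}=s_i^{-1}$ in the reduced word $s_1\cdots s_k$, which is impossible; hence the lift is a reduced, nonempty word in the free basis and so represents a nontrivial element of $\tilde H\le F$. Thus $W\ne 1$ in $F$ while $\phi(W)=1$ in $\Gamma$, giving $U(\Gamma,\calG)\le (2n-1)M$ and therefore $U(\Gamma)\le (2[\Gamma:H]-1)\,U(H)$.

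The routine parts are the numerical bookkeeping and the degenerate cases ($H$ or $Q$ finite, Schreier generators collapsing to the identity). I expect the one genuinely load-bearing step to be the non-free-triviality of the lift in the finite-index clause: a naive back-substitution of subgroup generators into words over $\calG$ can collapse freely, and it is precisely the Reidemeister--Schreier theorem --- that the nontrivial Schreier generators \emph{freely} generate $\tilde H=\phi^{-1}(H)$ inside the free group $F$ --- that guarantees the lifted relation survives as a nontrivial loop. In the finite-kernel clause the analogous subtlety is milder: the lift need only land in $K$ rather than be trivial in $\Gamma$, and passing to a bounded power returns it to the identity while preserving free nontriviality.
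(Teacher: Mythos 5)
The paper does not actually prove this proposition: it is quoted from Schleimer's note \cite{Schleimer:Girth} and used as a black box, so there is no in-text argument to compare yours against. That said, your proof is correct and self-contained. The upper-bound principle you start from (a freely nontrivial relator of length $L$ over $\calG^{\pm}$ cyclically reduces to a genuine cycle, forcing $U(\Gamma,\calG)\le L$) is sound; the finite-kernel clause goes through exactly as written, with the passage to a bounded power handling the case where the lifted relator lands in $K$ rather than at the identity; and in the finite-index clause you have correctly isolated the one load-bearing step, namely certifying that the lifted relation is freely nontrivial --- the Reidemeister--Schreier fact that the nontrivial Schreier generators freely generate $\phi^{-1}(H)$ inside $F$ is precisely what rules out a free collapse, and the prefix-closed transversal gives the $(2[\Gamma:H]-1)U(H)$ bound uniformly in $\calG$. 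This is essentially the standard argument underlying Schleimer's statement, so you have reconstructed the intended proof rather than found a new route.
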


\begin{coro} \label{virtually solvable}
A virtually solvable group $\Gamma$ has finite girth, unless it is infinite cyclic.
\end{coro}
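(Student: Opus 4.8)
The plan is to reduce everything to Theorem~\ref{law criteria} by showing that a virtually solvable group always satisfies a nontrivial law, whether or not the group itself is solvable. Since $\Gamma$ is assumed not to be infinite cyclic, the conclusion then follows immediately.

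First I would replace the given finite-index solvable subgroup by its normal core: intersecting the finitely many conjugates of a finite-index solvable subgroup yields $N \trianglelefteq \Gamma$ that is still of finite index and, being a subgroup of a solvable group, still solvable, say of derived length $d$. Set $m = [\Gamma : N]$. Because every element of the finite group $\Gamma/N$ has order dividing its order $m$, we obtain $g^m \in N$ for every $g \in \Gamma$. (Normality is genuinely used here, so that $\Gamma/N$ is an actual group and Lagrange applies.)

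Next I would build the law explicitly. Let $\delta_d(y_1, \dots, y_{2^d})$ denote the standard iterated-commutator word whose values generate the $d$-th derived subgroup, so that $\delta_d$ vanishes identically on any solvable group of derived length at most $d$, in particular on $N$. Consider the word
\[
w(x_1, \dots, x_{2^d}) \;=\; \delta_d\!\left(x_1^{\,m}, \dots, x_{2^d}^{\,m}\right).
\]
For arbitrary $x_1, \dots, x_{2^d} \in \Gamma$ the entries $x_i^{\,m}$ all lie in $N$, so $w$ evaluates to $1$; thus $w = 1$ is a law for $\Gamma$. It remains to check that $w$ is nontrivial as an element of the free group, which holds because the endomorphism of the free group sending each generator to its $m$-th power is injective (the $m$-th powers of distinct free generators freely generate a free subgroup), so it carries the nontrivial element $\delta_d$ to a nontrivial element. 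With a genuine law in hand and $\Gamma$ not infinite cyclic by hypothesis, Theorem~\ref{law criteria} yields that $\Gamma$ has finite girth.

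The step I expect to require the most care is precisely the passage from \emph{virtually solvable} to \emph{satisfies a law}: one cannot simply quote that solvable groups satisfy a law, since a virtually solvable group (for instance a finite perfect group times $\Z$) need not be solvable at all. The power-and-commutator trick above is what circumvents this, trading the finite quotient for the uniform exponent $m$. One could alternatively argue by cases using Proposition~\ref{schleimer criteria}, passing to a finite-index solvable subgroup; but the delicate case is when the only evident finite-index solvable subgroup is infinite cyclic, which has infinite girth, so that route still requires either locating a non-cyclic solvable finite-index subgroup or producing the law above. The uniform law is cleaner and handles every case at once.
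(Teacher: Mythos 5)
Your proof is correct. The key step — that a virtually solvable group satisfies a nontrivial law — is handled properly: passing to the normal core $N$ of derived length $d$ and index $m$, the word $\delta_d(x_1^m,\dots,x_{2^d}^m)$ vanishes identically on $\Gamma$, and it is nontrivial in the free group because $m$-th powers of a free basis freely generate a free subgroup. With that, Theorem~\ref{law criteria} alone finishes the argument.

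The paper gives no explicit proof, but its framing (``together with the proposition below\dots'') indicates the intended route is to combine Theorem~\ref{law criteria} with Proposition~\ref{schleimer criteria}: take a finite-index solvable subgroup $H$, note that $H$ satisfies a law and hence has finite girth provided it is not infinite cyclic, and then promote finite girth to $\Gamma$ via the finite-index criterion. As you observe, that route has a residual case — when every finite-index solvable subgroup one can readily exhibit is infinite cyclic (e.g.\ a virtually-$\Z$ group such as a finite group times $\Z$), the first bullet of Proposition~\ref{schleimer criteria} stalls because $\Z$ has infinite girth, and one must either find a larger non-cyclic solvable finite-index subgroup or fall back on a law for the whole group. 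Your uniform-law argument sidesteps the case analysis entirely and does not invoke Proposition~\ref{schleimer criteria} at all, at the modest cost of the normal-core-plus-power-word construction; it also proves the slightly stronger intermediate fact that every virtually solvable group satisfies a law, which is of independent use. Both approaches are valid; yours is the more self-contained.
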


One may ask if amenable groups, other than the infinite cyclic one, always have finite girth. However, according to the work of Akhmedov in \cite[\S2]{Akhmedov:Girth1}, the answer turns out to be negative. We also note his observation in \cite[\S4]{Akhmedov:Girth1} that certain groups constructed by Olshanskii in his book \cite{Olshanskii:RelationsBook} have finite girth but do not satisfy any law. Characterizing groups with finite girth appears to be a delicate and difficult task. See \cite[\S5]{Akhmedov:Girth1} for related questions.

%%%%%%%%%%%%%%%%%%%%%%%%%%%%%%%%%%%%%%%
\subsection{Criteria for Infinite Girth}
\label{Infinite Girth}
%%%%%%%%%%%%%%%%%%%%%%%%%%%%%%%%%%%%%%%

The proof of the Tits alternative for linear groups and mapping class groups, as well as for other classes of groups, use variations of \emph{ping-pong} lemma \cite{Klein:Ping-Pong} to construct a free subgroup. The following formulation was given in \cite{Tits:FreeSubgrp}.

\begin{prop}[Free Subgroup Criterion, \cite{Tits:FreeSubgrp}] \label{fsc}
Let $\Gamma$ be a group acting on a set $X$. Suppose there exist elements $\sigma, \tau \in \Gamma$, subsets $U_\sigma, U_\tau \subset X$, and a point $x \in X$, such that
\begin{enumerate}
\item $\displaystyle{x \not \in U_\sigma \cup U_\tau}$,
\item $\displaystyle{\sigma^k ( \{x\} \cup U_\tau ) \subset U_\sigma }$ for all $k \in \Z-\{0\}$, and
\item $\displaystyle{\tau^k ( \{x\} \cup U_\sigma) \subset U_\tau }$ for all $k \in \Z-\{0\}$.
\end{enumerate}
Then, $\langle \sigma, \tau \rangle$ is a non-abelian free subgroup of $\Gamma$.
\end{prop}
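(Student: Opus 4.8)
The plan is to show that the natural surjection from the free group $F$ on two letters $a,b$ onto $\langle \sigma, \tau \rangle$, determined by $a \mapsto \sigma$ and $b \mapsto \tau$, is injective; since a free group of rank two is non-abelian, this identifies $\langle \sigma, \tau \rangle$ as the desired non-abelian free subgroup. Concretely, every nontrivial element of $F$ is represented by a unique nontrivial reduced word, that is, an alternating product $w = \alpha_1 \alpha_2 \cdots \alpha_n$ in which each syllable $\alpha_i$ is a power $\sigma^{k_i}$ or $\tau^{k_i}$ with $k_i \in \Z - \{0\}$, and consecutive syllables involve different generators. It suffices to prove that the corresponding element of $\Gamma$ is nontrivial, and for this I would exhibit a point of $X$ that it moves, namely the base point $x$.

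The core of the argument is a ping-pong induction tracking the forward orbit of $x$ under the syllables of $w$, applied innermost-first (from right to left). First I would record the two consequences of hypotheses (2) and (3): for every $k \in \Z - \{0\}$ one has $\sigma^k x \in U_\sigma$, $\sigma^k U_\tau \subset U_\sigma$, $\tau^k x \in U_\tau$, and $\tau^k U_\sigma \subset U_\tau$. The base case is the innermost syllable $\alpha_n$: whichever generator it involves, it carries $x$ into the corresponding set $U_\sigma$ or $U_\tau$. For the inductive step, suppose the partial image $\alpha_{i+1} \cdots \alpha_n \, x$ already lies in $U_\sigma$ (respectively $U_\tau$); because $w$ is reduced, the next syllable $\alpha_i$ involves the other generator, and the inclusions above send this partial image into $U_\tau$ (respectively $U_\sigma$). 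Iterating, the full image $w\,x$ lands in $U_\sigma$ if $\alpha_1$ is a power of $\sigma$ and in $U_\tau$ if $\alpha_1$ is a power of $\tau$; in either case $w\,x \in U_\sigma \cup U_\tau$.

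To finish, I would invoke hypothesis (1): since $x \notin U_\sigma \cup U_\tau$ while $w\,x \in U_\sigma \cup U_\tau$, we have $w\,x \neq x$, so $w$ acts nontrivially on $X$ and is therefore a nontrivial element of $\Gamma$. (In particular, taking $w = \sigma^k$ or $w = \tau^k$ shows that $\sigma$ and $\tau$ have infinite order, consistent with the reduced-word framework.) As every nontrivial reduced word maps to a nontrivial element, the surjection $F \to \langle \sigma, \tau \rangle$ has trivial kernel, hence is an isomorphism, and $\langle \sigma, \tau \rangle$ is a non-abelian free group.

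I expect the only delicate point to be the bookkeeping in the induction: one must use the reducedness of $w$ at each step to guarantee that successive syllables alternate between the two generators, so that exactly one of the inclusions $\sigma^k U_\tau \subset U_\sigma$ or $\tau^k U_\sigma \subset U_\tau$ applies at each stage. It is worth remarking that the hypotheses do not require $U_\sigma$ and $U_\tau$ to be disjoint; the separating role usually played by disjointness in the classical two-set ping-pong lemma is here taken over entirely by the single base point $x$ lying outside both sets.
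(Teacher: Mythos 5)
Your proof is correct and is exactly the argument the paper has in mind: the paper only sketches the proof in a remark, observing that ``any nontrivial reduced word in $\sigma^{\pm 1}$ and $\tau^{\pm 1}$ takes $x \in X-(U_\sigma \cup U_\tau)$ into $U_\sigma \cup U_\tau$,'' and your syllable-by-syllable induction is precisely that classical ping-pong argument carried out in full. Your closing observation that disjointness of $U_\sigma$ and $U_\tau$ is not needed because the base point $x$ alone certifies nontriviality is also accurate.
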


\begin{rem}
For the proof of the above criterion, one merely observes inductively that any nontrivial reduced word in $\sigma^{\pm 1}$ and $\tau^{\pm 1}$ takes $x \in X-(U_\sigma \cup U_\tau)$ into $U_\sigma \cup U_\tau$ via the action of $\langle \sigma, \tau \rangle$, showing that the word cannot represent the identity element of $\Gamma$. This is the classical \emph{ping-pong argument}.
\end{rem}

In the study of the girth of groups, a criterion for a group to have infinite girth is instrumental. Generalizing and reformulating the work of Akhmedov \cite{Akhmedov:Girth2}, we give such a criterion in comparable generality as Proposition~\ref{fsc}; see \cite{Yamagata:Girth} for a similar alternative reformulation.

\begin{prop}[Infinite Girth Criterion] \label{igc}
Let $\Gamma$ be a group acting on a set $X$, with a finite generating set $\calG:=\{\gamma_1, \cdots , \gamma_n\}$. Suppose there exist elements $\sigma, \tau \in \Gamma$, subsets $U_\sigma, U_\tau \subset X$, and a point $x \in X$, such that
\begin{enumerate}
\item $\displaystyle{x \not \in (U_\sigma \cup U_\tau) \cup \bigcup_{\varepsilon=\pm1} \bigcup_{i=1}^n \gamma_i^\varepsilon(U_\sigma \cup U_\tau)}$,
\item $\displaystyle{\sigma^k \bigg( \{x\} \cup U_\tau \cup \bigcup_{\varepsilon=\pm1} \bigcup_{i=1}^n \gamma_i^\varepsilon(U_\tau) \bigg) \subset U_\sigma }$ for all $k \in \Z-\{0\}$, and
\item $\displaystyle{\tau^k \bigg( \{x\} \cup U_\sigma \cup \bigcup_{\varepsilon=\pm1} \bigcup_{i=1}^n \gamma_i^\varepsilon(U_\sigma) \bigg) \subset U_\tau }$ for all $k \in \Z-\{0\}$.
\end{enumerate}
Then, $\Gamma$ is a non-cyclic group with infinite girth.
\end{prop}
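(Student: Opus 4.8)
The statement bundles two claims—non-cyclicity and infinite girth—and I would treat them separately, the first being essentially free and the second carrying all the weight.

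\emph{Non-cyclicity.} First I would observe that on discarding every term involving the $\gamma_i$, hypotheses (1)--(3) collapse to exactly conditions (1)--(3) of the Free Subgroup Criterion (Proposition~\ref{fsc}) for the pair $\sigma,\tau$: one has $x\notin U_\sigma\cup U_\tau$, and $\sigma^k(\{x\}\cup U_\tau)\subseteq U_\sigma$, $\tau^k(\{x\}\cup U_\sigma)\subseteq U_\tau$ for all $k\in\Z-\{0\}$. Hence $F:=\langle\sigma,\tau\rangle$ is a non-abelian free subgroup of $\Gamma$, so $\Gamma$ is non-cyclic. This free backbone $F$ is also the engine for the girth estimate.

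\emph{The plan for infinite girth.} By Definition~\ref{girth} it suffices to exhibit, for each $N$, a finite generating set $\calG_N$ of $\Gamma$ whose Cayley graph has girth $>N$; that is, every nontrivial reduced word of length at most $N$ over $\calG_N$ should represent a nontrivial element. I would take $\calG_N=\{\sigma,\tau\}\cup\{h_1,\dots,h_n\}$ with $h_i=u_i\,\gamma_i\,u_i'$, where $u_i,u_i'$ are reduced words in $\sigma,\tau$ of some large length $L=L(N)$ chosen so that the finite family of flanks $\{u_i^{\pm1},u_i'^{\pm1}\}$ is pairwise almost non-cancelling (any product of two of them cancels at most a fixed amount $C<L$). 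Crucially this is not a conjugation, so it genuinely alters the Cayley graph, and it still generates $\Gamma$ because $\gamma_i=u_i^{-1}h_i u_i'^{-1}$ while $u_i,u_i'\in F$ are words in the generators $\sigma,\tau$. The point of the long flanks is that recovering a bare $\gamma_i$—and hence importing any defining relation of $\Gamma$—should cost many letters.

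\emph{The ping-pong certification.} Given a reduced word $W$ over $\calG_N$, I would expand each $h_i^{\pm1}$ and freely reduce the $\sigma,\tau$-letters while leaving the $\gamma_i^{\pm1}$ opaque, rewriting $W$ as an alternating product of maximal $F$-syllables separated by single letters $\gamma_i^{\pm1}$. So long as each such $\gamma_i^{\pm1}$ remains flanked on both sides by nonempty $F$-syllables, the orbit of the base point $x$ can be tracked: a reduced $F$-syllable shuttles a point between $U_\sigma$ and $U_\tau$ by the fsc-style alternation in (2)--(3), and at each $\gamma_i$-junction the boundary power $\sigma^k$ or $\tau^k$ of the adjacent syllable absorbs $\gamma_i^{\varepsilon}(U_\tau)$ into $U_\sigma$ (or $\gamma_i^{\varepsilon}(U_\sigma)$ into $U_\tau$)—which is precisely what the extra clauses in (2)--(3) and the enlarged exclusion of $x$ in (1) are engineered to provide. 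The orbit then terminates inside $U_\sigma\cup U_\tau$, so $W(x)\neq x$ and $W\neq 1$; note this dynamical certificate ignores the algebra of $\Gamma$ entirely, so \emph{any} all-flanked $W$ is nontrivial, and a nontrivial relation must expose some $\gamma_i$.

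\emph{The main obstacle.} The crux is converting ``every $\gamma_i$-letter stays flanked'' into a length lower bound. Exposing a $\gamma_i$ requires cancelling one of its length-$L$ flanks; the almost-non-cancelling choice guarantees that a single $h$-junction erases at most $C<L$ letters, so the remaining cancellation must be supplied by literal $\sigma,\tau$-generators, forcing $W$ to have length at least $L-C$. Thus every nontrivial relation over $\calG_N$ has length $\ge L-C$, and choosing $L>N+C$ gives girth$(\Gamma,\calG_N)>N$. I expect the genuinely delicate steps to be (i) checking that the syllable/alternation structure of the reduced word is honestly ping-pong-admissible—no accidental trivial syllables, and correct treatment of the extreme letters and of $x$ itself—and (ii) making ``almost non-cancelling flanks'' and the attendant length estimate precise and uniform over the finitely many junction types. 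This quantitative small-cancellation bookkeeping, rather than the ping-pong itself, is where the argument (reformulating Akhmedov's) has to be executed with care.
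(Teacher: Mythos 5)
Your proposal is correct and follows essentially the same route as the paper: replace each $\gamma_i$ by a version flanked by long words in $\sigma,\tau$, run the ping-pong using the enlarged hypotheses (1)--(3) to pass through each surviving $\gamma_i^{\pm1}$, and use a cancellation bound to guarantee that in any short reduced word every $\gamma_i$ stays flanked by a nonempty $\sigma,\tau$-syllable of the right type. The one difference is implementation: where you posit generic ``almost non-cancelling'' flanks $u_i,u_i'$ and defer the small-cancellation bookkeeping, the paper simply takes $\hat\gamma_i=\sigma^{p_i}\gamma_i\tau^{-p_i}$ with $p_i>M$ and $|p_i-p_j|>M$, which turns that bookkeeping into arithmetic on exponents (a syllable between consecutive $\gamma$'s reduces to a power $\sigma^{-p_{i_{\ell-1}}+p+p_{i_\ell}}$ with $|p|<M$, hence is nontrivial) and automatically supplies the required last-letter-$\sigma$/first-letter-$\tau$ alternation at each junction.
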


\begin{rem}
Clearly, $\sigma$, $\tau$, $U_\sigma$, $U_\tau$, and $x \in X$ in Proposition~\ref{igc} satisfy the conditions in Proposition~\ref{fsc}. Hence, $\langle \sigma, \tau \rangle < \Gamma$ must be a non-abelian free subgroup.
\end{rem}

\begin{proof} % Proof of Infinite Girth Criterion
Let $M$ be a positive integer, and we aim to find a new generating set $\hat{\calG}$ for $\Gamma$ such that $U(\Gamma, \hat{\calG}\, ) \geq M$. Let $P=\{ p_1, \cdots , p_n \}$ be a set of positive intergers such that $p_i>M$ for all $i$ and $|p_i-p_j|>M$ for all distinct $i, j$. Let $\hat \gamma_i:= \sigma^{p_i}\gamma_i \tau^{-p_i}$ for each $i$. The set $\hat{\calG}:=\{\sigma, \tau, \hat \gamma_1, \cdots , \hat \gamma_n \}$ clearly generates $\Gamma$. Let $w$ be a nontrivial reduced word in $\hat{\calG} \cup \hat{\calG}^{-1}$ with the length less than $M$ with respect to $\hat{\calG} \cup \hat{\calG}^{-1}$. We can write $w$ as
\[
w=u_1 \hat \gamma_{i_1}^{\varepsilon_1} u_2 \hat \gamma_{i_2}^{\varepsilon_2} \cdots \hat \gamma_{i_s}^{\varepsilon_s} u_{s+1}\\
\]
where $\varepsilon_\ell \in \{ \pm1 \}$ and the subword $u_\ell=u_\ell(\sigma, \tau)$ is a (possibly empty) reduced word in $\{ \sigma^{\pm 1}, \tau^{\pm 1} \}$. If $u_\ell$ is an empty word and $i_{\ell-1} = i_\ell$ for some $\ell$, we must have $\varepsilon_{\ell-1}=\varepsilon_\ell$. For otherwise, a cancellation occurs and contradicts the assumption that $w$ is a reduced word in $\hat{\calG} \cup \hat{\calG}^{-1}$.

Now, regarded as an element of $\Gamma$, $w$ can be expressed as
\begin{eqnarray*}
w&=&u_1 \hat \gamma_{i_1}^{\varepsilon_1} u_2 \hat \gamma_{i_2}^{\varepsilon_2} \cdots \hat \gamma_{i_s}^{\varepsilon_s} u_{s+1}\\
%&=&u_1 \alpha_1 \gamma_{i_1}^{\varepsilon_1} \beta_1 u_2 \alpha_2 \gamma_{i_2}^{\varepsilon_2} \beta_2 \cdots \alpha_s \gamma_{i_s}^{\varepsilon_s} \beta_s u_{s+1}\\
&=&v_1 \gamma_{i_1}^{\varepsilon_1} v_2 \gamma_{i_2}^{\varepsilon_2} \cdots \gamma_{i_s}^{\varepsilon_s} v_{s+1}
\end{eqnarray*}
where $v_\ell=v_\ell(\sigma, \tau)$ is a reduced word in $\{ \sigma^{\pm 1}, \tau^{\pm 1} \}$ for $\beta_{\ell-1} u_\ell \alpha_\ell$ (with convention $\alpha_{s+1}=\beta_0=1$) and
\[
\alpha_\ell=\begin{cases}
\sigma^{p_{i_\ell}} & \text{if $\varepsilon_\ell=+1$} \\
\tau^{p_{i_\ell}} & \text{if $\varepsilon_\ell=-1$}
\end{cases}
\;\;\;\;\;\;
\beta_\ell=\begin{cases}
\tau^{-p_{i_\ell}} & \text{if $\varepsilon_\ell=+1$} \\
\sigma^{-p_{i_\ell}} & \text{if $\varepsilon_\ell=-1$}
\end{cases}
\]
The idea of the proof is to apply the ping-pong argument to $w$ to show that $w$ cannot represent the identity element of $\Gamma$. Provided with suitable initial points in $X$, the ping-pong argument applies easily to the strings $v_\ell$. What we need to show is that we can pass each $\gamma^{\varepsilon_\ell}_{i_\ell}$ in the ping-pong argument; in other words, we need to check that $\gamma^{\varepsilon_\ell}_{i_\ell}$ takes the terminal point from the ping-pong rally $v_{\ell+1}$ to a suitable initial point for the ping-pong rally $v_\ell$. We will see that our choice of $p_i$ prevents excessive cancellations, and we can indeed pass each $\gamma^{\varepsilon_\ell}_{i_\ell}$ under the conditions (2) and (3) in the statement of the proposition.

\begin{claim} \label{pass gamma}% We Can Pass Gamma
For each $\ell$, $v_\ell$ is not an empty word. If $\varepsilon_\ell=+1$, then the last letter of $v_\ell$ is $\sigma^{\pm 1}$ and the first letter of $v_{\ell+1}$ is $\tau^{\pm 1}$. If $\varepsilon_\ell=-1$, then the last letter of $v_\ell$ is $\tau^{\pm 1}$ and the first letter of $v_{\ell+1}$ is $\sigma^{\pm 1}$.
\end{claim}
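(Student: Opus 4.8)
The plan is to compute the freely reduced form of $v_\ell=\beta_{\ell-1}u_\ell\alpha_\ell$ directly, exploiting the numerical gap between the ``long'' prefix and suffix and the ``short'' middle. The key observation is that $\beta_{\ell-1}$ and $\alpha_\ell$ are each a power of a single generator, with exponent of absolute value $p_{i_{\ell-1}}$ and $p_{i_\ell}$ respectively, both exceeding $M$, whereas $u_\ell$ is a reduced word in $\{\sigma^{\pm1},\tau^{\pm1}\}$ of length $|u_\ell|<M$, since $\sigma,\tau\in\hat\calG$ and $w$ has $\hat\calG$-length less than $M$. Because the claim only records \emph{which} generator ($\sigma$ or $\tau$) occurs as the outermost letter of $v_\ell$, not its sign, it suffices to show that the outermost letters of $\beta_{\ell-1}$ and $\alpha_\ell$ are never completely cancelled in the reduction and that $v_\ell$ is nonempty.

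First I would record the dictionary between signs and letters: $\beta_{\ell-1}$ is a power of $\tau$ when $\varepsilon_{\ell-1}=+1$ and of $\sigma$ when $\varepsilon_{\ell-1}=-1$, while $\alpha_\ell$ is a power of $\sigma$ when $\varepsilon_\ell=+1$ and of $\tau$ when $\varepsilon_\ell=-1$. Consequently the first letter of $v_\ell$ should be governed by $\varepsilon_{\ell-1}$ and the last letter by $\varepsilon_\ell$, matching the claim after an index shift; moreover $\beta_{\ell-1}$ and $\alpha_\ell$ are powers of the \emph{same} generator precisely when $\varepsilon_{\ell-1}\neq\varepsilon_\ell$. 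I would then split into two cases.

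In the different-letter case ($\varepsilon_{\ell-1}=\varepsilon_\ell$), no $\sigma$-letter can ever cancel a $\tau$-letter between the two ends. When reducing $\beta_{\ell-1}u_\ell$, the prefix $\beta_{\ell-1}$ can only shorten by cancelling an initial run of $u_\ell$ in the same generator (of opposite sign), of length at most $|u_\ell|<M<p_{i_{\ell-1}}$; hence at least $p_{i_{\ell-1}}-|u_\ell|>0$ copies of its letter survive, and symmetrically at least $p_{i_\ell}-|u_\ell|>0$ copies of $\alpha_\ell$'s letter survive at the end. The surviving outer blocks cannot interact, as they are different generators, so $v_\ell$ is nonempty with first letter that of $\beta_{\ell-1}$ and last letter that of $\alpha_\ell$, exactly as asserted.

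The same-letter case ($\varepsilon_{\ell-1}\neq\varepsilon_\ell$) is the main obstacle. Writing the common generator as $x$, we have $\beta_{\ell-1}=x^{-p_{i_{\ell-1}}}$ and $\alpha_\ell=x^{+p_{i_\ell}}$. If $u_\ell$ contains a letter other than $x^{\pm1}$, that letter blocks both reductions and the outer blocks survive as before. The delicate situation is when $u_\ell$ is itself a power $x^{c}$ with $|c|<M$ (allowing $c=0$), for then $v_\ell$ collapses to the single power $x^{\,p_{i_\ell}-p_{i_{\ell-1}}+c}$; here I would invoke the choice of $P$. When $i_{\ell-1}\neq i_\ell$ the gap condition $|p_{i_\ell}-p_{i_{\ell-1}}|>M>|c|$ forces this exponent to be nonzero, so $v_\ell$ is a nonzero power of $x$ and both outer letters are $x$, as required. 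When $i_{\ell-1}=i_\ell$ the exponent equals $c$, which is nonzero as long as $u_\ell$ is nonempty, and the one remaining degenerate configuration ($u_\ell$ empty, $i_{\ell-1}=i_\ell$, $\varepsilon_{\ell-1}\neq\varepsilon_\ell$) is exactly the one already excluded by the reducedness of $w$ noted above. The boundary words $v_1=u_1\alpha_1$ and $v_{s+1}=\beta_s u_{s+1}$ are then handled by the one-sided version of the different-letter argument. The step I expect to require the most care is this same-letter collapse, where non-triviality of $v_\ell$ rests entirely on the two quantitative inputs built into $P$ (the lower bound $p_i>M$ and the gap $|p_i-p_j|>M$) together with the reducedness exclusion.
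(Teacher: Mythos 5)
Your proof is correct and follows essentially the same route as the paper's: both compute the free reduction of $v_\ell=\beta_{\ell-1}u_\ell\alpha_\ell$ and rule out total cancellation using exactly the three inputs $p_i>M$, $|p_i-p_j|>M$, and the reducedness of $w$ (which excludes the configuration where $u_\ell$ is empty, $i_{\ell-1}=i_\ell$, and $\varepsilon_{\ell-1}\neq\varepsilon_\ell$). The only difference is organizational --- the paper splits cases by the last letter of $u_\ell$ and treats the two ends of $v_\ell$ by symmetric arguments, while you split by whether $\beta_{\ell-1}$ and $\alpha_\ell$ are powers of the same generator and handle both ends simultaneously.
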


\begin{proof}[Proof of Claim~\ref{pass gamma}] % Proof of We Can Pass Gamma
Let us show that, if $\varepsilon_\ell=+1$, then $v_\ell$ is a non-empty word ending with $\sigma^{\pm1}$. Since $\varepsilon_\ell=+1$, we have $\alpha_\ell= \sigma^{p_{i_\ell}}$ and $\beta_\ell= \tau^{-p_{i_\ell}}$. There are three cases to consider: (i) $u_\ell$ is an empty word; (ii) the last letter of $u_\ell$ is $\tau^{\pm 1}$; or (iii) the last letter of $u_\ell$ is $\sigma^{\pm 1}$.

Case (i): If $u_\ell$ is empty, then $v_\ell$ is the reduced word for $\beta_{\ell-1} \alpha_\ell$, and thus we have
\[
v_\ell=\begin{cases}
\sigma^{p_{i_\ell}} & \text{if $\ell=1$} \\
\tau^{-p_{i_{\ell-1}}} \sigma^{p_{i_\ell}} & \text{if $\ell \neq 1$ and $\varepsilon_{\ell-1}=+1$} \\
\sigma^{-p_{i_{\ell-1}}+p_{i_\ell}} & \text{if $\ell \neq 1$ and $\varepsilon_{\ell-1}=-1$} \\
\end{cases}
\]
In the last subcase, since $\varepsilon_\ell=+1 \neq -1=\varepsilon_{\ell-1}$, we must have $i_\ell \neq i_{\ell-1}$ as noted before. Thus, we must have $|p_{i_\ell}-p_{i_{\ell-1}}|>M$, and it follows that $v_\ell$ is a nontrivial power of $\sigma$. In all subcases, $v_\ell$ is indeed a non-empty word ending with $\sigma^{\pm 1}$.

Case (ii): If the last letter of $u_\ell$ is $\tau^{\pm 1}$, then there is no cancellation between $u_\ell$ and $\alpha_\ell= \sigma^{p_{i_\ell}}$ as a word in $\{ \sigma^{\pm 1}, \tau^{\pm 1} \}$. Hence, $v_\ell$ is again a non-empty word ending with $\sigma^{\pm 1}$.

Case (iii): Finally, suppose the last letter of $u_\ell$ is $\sigma^{\pm 1}$. If $u_\ell$ is not a power of $\sigma$, then $u_\ell= \cdots \tau^{q} \sigma^{p}$ for some $q$ and $p$. So, $u_\ell \alpha_\ell= \cdots \tau^q \sigma^{p+p_{i_\ell}}$. Note that we must have $|p| < M$. For otherwise, the length of $u_\ell$ as a word in $\hat{\calG} \cup \hat{\calG}^{-1}$, and thus the length of $w$ as a word in $\hat{\calG} \cup \hat{\calG}^{-1}$, is at least $M$; this contradicts with the assumption on the length of $w$. Now, $|p|<M$ and $p_{i_\ell}>M$ together imply $p+p_{i_\ell} \neq 0$. Thus, the last letter of $v_\ell$ must be $\sigma^{\pm 1}$. If $u_\ell$ is a power of $\sigma$, say $u_\ell= \sigma^p$, then
\[
v_\ell=\begin{cases}
\sigma^{p+p_{i_\ell}} & \text{if $\ell=1$} \\
\tau^{-p_{i_{\ell-1}}} \sigma^{p+p_{i_\ell}} & \text{if $\ell \neq 1$ and $\varepsilon_{\ell-1}=+1$} \\
\sigma^{-p_{i_{\ell-1}}+p+p_{i_\ell}} & \text{if $\ell \neq 1$ and $\varepsilon_{\ell-1}=-1$}
\end{cases}
\]
In the first two subcases, $v_\ell$ ends with a nontrivial power of $\sigma$, because $|p|<M$ and $p_{i_\ell} >M$ imply $p+p_{i_\ell} \neq 0$. In the third subcase, we must have $i_\ell \neq i_{\ell-1}$, and hence $|p_{i_\ell}-p_{i_{\ell-1}}|>M$. It now follows from $|p|<M$ that $-p_{i_{\ell-1}}+p+p_{i_\ell} \neq 0$, and $v_\ell$ is again a nontrivial power of $\sigma$. Thus, in all subcases, $v_\ell$ is again a non-empty word ending with $\sigma^{\pm 1}$ as desired.

This concludes the proof that, if $\varepsilon_\ell=+1$, then $v_\ell$ is a non-empty word ending with $\sigma^{\pm 1}$. The analogous arguments show that, if $\varepsilon_\ell=+1$, then $v_{\ell+1}$ is a non-empty word beginning with $\tau^{\pm 1}$. The symmetric arguments show that, if $\varepsilon_\ell=-1$, then $v_\ell$ is a nonempty word ending with $\tau^{\pm1}$ and $v_{\ell+1}$ is a non-empty word beginning with $\sigma^{\pm1}$.
\end{proof} % End of Proof of We Can Pass Gamma

% We return to the proof of the proposition. Our word $w$, regarded as an element $w=v_1 \gamma_{i_1}^{\varepsilon_1} v_2 \gamma_{i_2}^{\varepsilon_2} \cdots \gamma_{i_s}^{\varepsilon_s} v_{s+1}$ in $\Gamma$, satisfies the conclusion of the above Claim~\ref{pass gamma}.

\begin{claim} \label{nontrivial} % Nontrivial Homeo
If the last letter of $v_{s+1}$ is $\sigma^{\pm1}$ and $y \in \{x\} \cup U_\tau \cup \bigcup_{\varepsilon=\pm1} \bigcup_{i=1}^n \gamma^{\varepsilon}_i(U_\tau)$, or if the last letter of $v_{s+1}$ is $\tau^{\pm1}$ and $y \in \{x\} \cup U_\sigma \cup \bigcup_{\varepsilon=\pm1} \bigcup_{i=1}^n \gamma^{\varepsilon}_i(U_\sigma)$, then $w(y) \in U_\sigma \cup U_\tau$.
\end{claim}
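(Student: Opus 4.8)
The plan is to run the ping-pong argument on $w$ by processing the word $w=v_1 \gamma_{i_1}^{\varepsilon_1} v_2 \cdots \gamma_{i_s}^{\varepsilon_s} v_{s+1}$ from right to left, tracking the image of the initial point $y$ through a chain of intermediate points and proving by downward induction that each lands in $U_\sigma$ or $U_\tau$. Before starting, I would record the elementary ping-pong behaviour of the pure $\sigma$--$\tau$ strings: using only $\sigma^k(\{x\}\cup U_\tau)\subset U_\sigma$ and $\tau^k(\{x\}\cup U_\sigma)\subset U_\tau$ (weakenings of conditions (2) and (3)), a non-empty reduced word $v$ in $\{\sigma^{\pm1},\tau^{\pm1}\}$ carries any point $z$ that is \emph{compatible with the last letter of $v$} --- meaning $z\in\{x\}\cup U_\tau$ when that last letter is $\sigma^{\pm1}$, and $z\in\{x\}\cup U_\sigma$ when it is $\tau^{\pm1}$ --- into $U_\sigma$ if the first letter of $v$ is $\sigma^{\pm1}$ and into $U_\tau$ if it is $\tau^{\pm1}$. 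This is the standard alternating argument and needs no cancellation analysis.

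Next I would set up the iteration. Put $q_{s+1}=v_{s+1}(y)$ and, for $\ell\le s$, $q_\ell=v_\ell\big(\gamma_{i_\ell}^{\varepsilon_\ell}(q_{\ell+1})\big)$, so that $w(y)=q_1$. The target of the induction is: for every $\ell$, $q_\ell\in U_\sigma$ if the first letter of $v_\ell$ is $\sigma^{\pm1}$, and $q_\ell\in U_\tau$ if it is $\tau^{\pm1}$. The base case $\ell=s+1$ is exactly what the two hypotheses of the Claim furnish. If the last letter of $v_{s+1}$ is $\sigma^{\pm1}$, then $y\in\{x\}\cup U_\tau\cup\bigcup_{\varepsilon=\pm1}\bigcup_{i=1}^n\gamma_i^\varepsilon(U_\tau)$, so applying the last (rightmost) letter of $v_{s+1}$, a nontrivial power of $\sigma$, lands in $U_\sigma$ by condition (2); the elementary ping-pong then absorbs the remaining letters of $v_{s+1}$, placing $q_{s+1}$ as dictated by its first letter. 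The case where the last letter of $v_{s+1}$ is $\tau^{\pm1}$ is symmetric, using condition (3). (When $s=0$ we have $w=v_1=v_{s+1}$, and only this base case is needed.)

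For the inductive step I would pass each $\gamma_{i_\ell}^{\varepsilon_\ell}$ using Claim~\ref{pass gamma}. When $\varepsilon_\ell=+1$, Claim~\ref{pass gamma} says the first letter of $v_{\ell+1}$ is $\tau^{\pm1}$, so the inductive hypothesis gives $q_{\ell+1}\in U_\tau$; hence $\gamma_{i_\ell}(q_{\ell+1})\in\gamma_{i_\ell}(U_\tau)\subset\bigcup_{\varepsilon=\pm1}\bigcup_{i=1}^n\gamma_i^\varepsilon(U_\tau)$, and since the last letter of $v_\ell$ is $\sigma^{\pm1}$ (again Claim~\ref{pass gamma}), condition (2) sends this point into $U_\sigma$, after which the elementary ping-pong handles the rest of $v_\ell$ and fixes $q_\ell$ by its first letter. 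The case $\varepsilon_\ell=-1$ is the mirror image: $q_{\ell+1}\in U_\sigma$, so $\gamma_{i_\ell}^{-1}(q_{\ell+1})\in\gamma_{i_\ell}^{-1}(U_\sigma)\subset\bigcup_{\varepsilon=\pm1}\bigcup_{i=1}^n\gamma_i^\varepsilon(U_\sigma)$, and the last letter $\tau^{\pm1}$ of $v_\ell$ lets condition (3) send it into $U_\tau$. Completing the induction yields $w(y)=q_1\in U_\sigma\cup U_\tau$.

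I expect the main obstacle to be the bookkeeping of matching types across each junction: one must check that the ``last letter of $v_\ell$ / first letter of $v_{\ell+1}$'' data from Claim~\ref{pass gamma} is precisely what guarantees that $\gamma_{i_\ell}^{\varepsilon_\ell}(q_{\ell+1})$ lands in the particular translate $\gamma_i^\varepsilon(U_\tau)$ or $\gamma_i^\varepsilon(U_\sigma)$ that conditions (2) and (3) are built to swallow, and that the sign $\varepsilon_\ell$ correctly selects the $\sigma$-side versus the $\tau$-side at each step. The enhanced terms $\bigcup_{\varepsilon=\pm1}\bigcup_{i=1}^n\gamma_i^\varepsilon(U_\tau)$ and $\bigcup_{\varepsilon=\pm1}\bigcup_{i=1}^n\gamma_i^\varepsilon(U_\sigma)$ in the hypotheses of Proposition~\ref{igc} exist exactly to make these junctions go through, and verifying that no type is mismatched at any $\gamma_{i_\ell}^{\varepsilon_\ell}$ is the crux of the argument.
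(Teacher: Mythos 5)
Your proposal is correct and follows essentially the same route as the paper: the paper's proof is an induction on $s$ that peels off the rightmost block $\gamma_{i_s}^{\varepsilon_s}v_{s+1}$, sends $y$ to $y'=\gamma_{i_s}^{\varepsilon_s}v_{s+1}(y)\in\gamma_{i_s}^{\varepsilon_s}(U_\tau)$ or $\gamma_{i_s}^{\varepsilon_s}(U_\sigma)$ using Claim~\ref{pass gamma} and conditions (2)--(3), and applies the inductive hypothesis to the shorter word --- which is exactly your right-to-left chain of intermediate points $q_\ell$ written recursively rather than iteratively. The junction bookkeeping you flag as the crux is handled in the paper precisely as you describe, so no changes are needed.
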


\begin{proof}[Proof of Claim~\ref{nontrivial}] % Proof of Nontrivial Homeo
We will prove the claim by induction on $s$. If $s=0$, $w=v_1$ is merely a reduced word in $\sigma^{\pm1}$ and $\tau^{\pm1}$. In this case, $w(y) \in U_\sigma \cup U_\tau$ follows from the classical ping-pong argument as in the proof of Free Subgroup Criterion.

Now, as the induction hypothesis, suppose that the claim is true for $s-1 \geq 0$, and let $w=v_1 \gamma_{i_1}^{\varepsilon_1} v_2 \gamma_{i_2}^{\varepsilon_2} \cdots \gamma_{i_s}^{\varepsilon_s} v_{s+1}$. Suppose $\varepsilon_s=+1$ for now, so that the first letter of $v_{s+1}$ is $\tau^{\pm1}$ by Claim~\ref{pass gamma}. Then, we have $v_{s+1}(y) \in U_\tau$ by the classical ping-pong argument, and we obtain $y':=\gamma_{i_s}v_{s+1}(y) \in \gamma_{i_s}(U_\tau)$. Now, also by Claim~\ref{pass gamma}, the last letter of $v_s$ is $\sigma^{\pm1}$. Thus, applying the induction hypothesis to $w':=v_1 \gamma_{i_1}^{\varepsilon_1} v_2 \gamma_{i_2}^{\varepsilon_2} \cdots \gamma_{i_{s-1}}^{\varepsilon_{s-1}} v_s$ and $y'$, we see that $w(y)=w'(y') \in U_\sigma \cup U_\tau$. The $\varepsilon_s=-1$ case is analogous.
\end{proof} % End of Proof of Nontrivial Homeo

Since $x \not \in U_\sigma \cup U_\tau$ by the assumption and $w(x) \in U_\sigma \cup U_\tau$ by Claim~\ref{nontrivial}, it follows that $w$ cannot represent the identity element in $\Gamma$. Namely, any non-empty word in $\hat{\calG}$ that represents the identity element of $\Gamma$ must be of length at least $M$ with respect to $\hat{\calG}$. Hence, $U(\Gamma) \geq U(\Gamma; \hat{\calG}\,) \geq M$.
\end{proof} % End of Proof of Infinite Girth Criterion

\begin{rem}
Although every group that satisfies our Infinite Girth Criterion (Proposition~\ref{igc}) must contain a non-abelian free subgroup, generally groups containing non-abelian free subgroup need \emph{not} have infinite girth. The groups constructed by Olshanskii \cite{Olshanskii:RelationsBook}, which we mentioned in \S\ref{Criteria: Finite Girth}, contain non-abelian free subgroups and have finite girth; indeed, the existence of such non-abelian free subgroups is precisely the reason why those groups do not satisfy any law.
\end{rem}

It is natural to ask how the property of having infinite girth behaves under homomorphisms. The following partial answer by Akhmedov \cite{Akhmedov:Girth2} plays an important role in \S4.

\begin{prop}[\cite{Akhmedov:Girth2}] \label{akhmedov criterion}
A group $\Gamma$ has infinite girth if it admits a surjection onto a non-cyclic group with infinite girth.
\end{prop}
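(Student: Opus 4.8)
The goal is to show $U(\Gamma)=\infty$, so by Definition~\ref{girth} it suffices, for each positive integer $M$, to produce a finite generating set $\hat{\calG}$ of $\Gamma$ with $U(\Gamma,\hat{\calG})\ge M$; equivalently, every nontrivial reduced word of length $<M$ in $\hat{\calG}\cup\hat{\calG}^{-1}$ must represent a nontrivial element of $\Gamma$. The plan is to transport a high-girth generating set from the quotient $Q$ up to $\Gamma$ and then use the surjection $\phi\colon\Gamma\twoheadrightarrow Q$ to certify nontriviality, in the spirit of the substitution $\hat\gamma_i=\sigma^{p_i}\gamma_i\tau^{-p_i}$ used in the proof of Proposition~\ref{igc}.

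First I would fix, once and for all, a finite generating set $\{a_1,\dots,a_m\}$ of $\Gamma$ (which exists by our standing convention). Given $M$, since $U(Q)=\infty$ I choose a finite generating set $\calS=\{s_1,\dots,s_k\}$ of $Q$ with $U(Q,\calS)\ge N$, where $N$ is a large threshold to be fixed later in terms of $M$ and the lengths of the paddings introduced below; as $Q$ is non-cyclic we have $k\ge 2$. I then lift: choose $t_j\in\Gamma$ with $\phi(t_j)=s_j$. Writing each $\phi(a_i)$ as a word $w_i$ in $\calS$ and lifting $w_i$ letter-by-letter to a word $W_i$ in the $t_j$, the elements $c_i:=W_i(t_1,\dots,t_k)^{-1}a_i$ lie in $\ker\phi$, and $\{t_1,\dots,t_k\}\cup\{c_1,\dots,c_m\}$ generates $\Gamma$, since $a_i=W_i(t)\,c_i$.

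Next I would pad the kernel generators so that $\phi$ can still detect them. Following the mechanism of Proposition~\ref{igc}, I replace each $c_i$ by $\hat c_i:=\rho_i\,c_i$, where $\rho_i$ is a long word in the $t_j$ whose $\phi$-image is a prescribed reduced word $\pi_i$ in $\calS$; the crucial requirement is that the $\pi_i$ interlock, so that free cancellation in $Q$ remains strictly local. The resulting finite set $\hat{\calG}:=\{t_1,\dots,t_k,\hat c_1,\dots,\hat c_m\}$ generates $\Gamma$. Now let $w$ be a nontrivial reduced word of length $<M$ in $\hat{\calG}\cup\hat{\calG}^{-1}$. Applying $\phi$ and substituting $t_j\mapsto s_j$ and $\hat c_i\mapsto\pi_i$, I obtain a word $\phi(w)$ in $\calS\cup\calS^{-1}$ whose unreduced length is at most $M(L+1)$, where $L=\max_i|\pi_i|$; choosing $N>M(L+1)$ guarantees that $\phi(w)$ is shorter than the girth of $(Q,\calS)$. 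Provided $\phi(w)$ does not freely reduce to the empty word, it therefore represents a nontrivial element of $Q$, whence $w\ne 1$ in $\Gamma$, as required.

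The main obstacle is precisely the proviso in the last sentence: ensuring that the projected word $\phi(w)$ does not collapse to the identity under free reduction. This is delicate for two reasons. A word $w$ supported entirely on the $\hat c_i$ projects into $Q$ with all of its kernel content erased, so its nontriviality must be guaranteed solely by the non-cancellation of the paddings $\pi_i$; and the generators $s_j$ may have finite order in $Q$, so naive padding by high powers of a single $s_j$ could vanish. Both issues should be handled by exploiting the large girth of $(Q,\calS)$ together with the non-cyclicity of $Q$: since $U(Q,\calS)\ge N$, the ball of radius about $N/2$ in the Cayley graph of $(Q,\calS)$ is tree-like, so sufficiently short reduced words are pairwise distinct and multiply without relation, and choosing the $\pi_i$ to be distinct, long, and two-sidedly ``locked'' words in at least two of the generators then confines cancellation to bounded overlaps at the junctions, leaving a nonempty reduced residue. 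Quantifying the admissible overlap and fixing the precise form of the paddings and the threshold $N$ is the technical heart of the argument; once it is in place, the girth bound for $(Q,\calS)$ finishes the proof exactly as above.
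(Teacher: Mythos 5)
A preliminary remark: the paper does not prove this proposition at all --- it is quoted from \cite{Akhmedov:Girth2} --- so there is no in-paper argument to compare yours against, and I will judge the proposal on its own terms. Your overall strategy is the right one, and it is essentially Akhmedov's: generate $\Gamma$ by lifts $t_j$ of a high-girth generating set $\calS$ of $Q$ together with the kernel elements $c_i = W_i(t)^{-1}a_i$, pad the $c_i$ by long words in the $t_j$, and use $U(Q,\calS)\ge N$ to certify that short reduced words in the new generating set survive. The logical reduction is also correct: if $\phi(w)$ does not freely reduce to the empty word and has length $< N$, then $w\ne 1$. But as written the proof has a genuine gap, and you flag it yourself: the verification that $\phi(w)$ never freely reduces to the empty word, for \emph{every} nonempty reduced $w$ of length $<M$ in $\hat{\calG}\cup\hat{\calG}^{-1}$, is the entire content of the lemma, and you leave both the choice of the paddings $\pi_i$ and the overlap analysis as a placeholder (``the technical heart''). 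Nothing is proved until that is done: with a careless choice of $\rho_i$ the words $\hat c_i^{-1}\hat c_j$ (if $\pi_i=\pi_j$) or $\hat c_i\hat c_j\hat c_i^{-1}\hat c_j^{-1}$ (if the $\pi_i$ are powers of a common word) already map to the empty word, and your one-sided padding $\hat c_i=\rho_i c_i$ makes the junction analysis asymmetric and strictly harder than necessary, since a block $\pi_i^{-1}$ now exposes the \emph{beginning} of $\pi_i$ to its right-hand neighbour.

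The missing step can be closed by transplanting the mechanism of Proposition~\ref{igc} verbatim. Since $Q$ is non-cyclic we may take $k\ge 2$; set $\hat c_i := t_1^{p_i}\,c_i\,t_2^{-p_i}$ with $p_i>M$ and $|p_i-p_j|>M$ for $i\ne j$, so that $\phi(\hat c_i)=s_1^{p_i}s_2^{-p_i}$. For a reduced word $w=u_1\hat c_{i_1}^{\varepsilon_1}u_2\cdots\hat c_{i_r}^{\varepsilon_r}u_{r+1}$ of length $<M$, the image $\phi(w)$ is exactly the concatenation $v_1v_2\cdots v_{r+1}$ analyzed in Claim~\ref{pass gamma} (with $\sigma,\tau$ replaced by $s_1,s_2$ and with the middle letters erased, since $\phi(c_i)=1$), now read inside the free group on $\calS$ rather than acted on a set. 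That claim shows each $v_\ell$ is a nonempty reduced word, ending in $s_1^{\pm1}$ and followed by one beginning with $s_2^{\pm1}$ (or vice versa), so no cancellation occurs at the junctions and $\phi(w)$ is a nonempty freely reduced word; the hypotheses $p_i>M$, $|p_i-p_j|>M$ are what prevent the length-$(<M)$ subwords $\phi(u_\ell)$ from consuming a block. Choosing $N$ larger than the resulting length bound finishes the argument. Two smaller points: your worry about generators of finite order is vacuous, because $U(Q,\calS)\ge N$ already asserts that every nonempty freely reduced word of length $<N$ --- in particular $s_1^{p}$ for $0<p<N$ --- is nontrivial in $Q$; and for the same reason you do not need the ``tree-like ball'' observation or any distinctness of group elements, only non-triviality of reduced words, which is literally the girth hypothesis.
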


As noted in \S\ref{Introduction}, we do not know if the presence of a finite-index infinite-girth subgroup of $\Gamma$ guarantees that $\Gamma$ itself has infinite girth; indeed, it is suspected that is not the case in general (see \cite{Akhmedov:Girth2}). This is unfortunate, because we cannot pass to a finite-index subgroup to prove that a given group has infinite girth. In our proof of the girth alternative for subgroups of mapping class groups, we go around this obstacle by applying the above proposition in a suitable manner.

%%%%%%%%%%%%%%%%%%%%%%%%%%%%%%%%%%%%%%%
%
\section{Elements of Mapping Class Groups}
\label{Elements}
%
%%%%%%%%%%%%%%%%%%%%%%%%%%%%%%%%%%%%%%%

Let $\surface$ be a (not necessarily connected) compact orientable surface. A \emph{multi-loop} on $\surface$ is a pair-wise disjoint collection of simple closed curves on $\surface$, and it is said to be \emph{essential} if each component is not null-homotopic or peripheral.
\begin{itemize}
\item An isotopy class $\calA$ of an essential (possibly empty) multi-loop on $\surface$ is said to be a \emph{reduction system} for $\sigma \in \Mod(\surface)$ if $\sigma$ fixes $\calA$.
\item An element $\sigma \in \Mod(\surface)$ is said to be \emph{reducible} if it admits a \emph{non-empty} reduction system $\calA$; it is said to be \emph{irreducible} otherwise.
\item An element $\sigma \in \Mod(\surface)$ is said to be \emph{periodic} if its order is finite; it is said to be \emph{aperiodic} otherwise.
\end{itemize}
Nielsen studied $\Mod(T^2) \cong \SL(2,\Z)$ in terms of the dynamical properties of its action on the Teichm\"{u}ller space $\Teich(T^2) \cong \Hyp^2$ and its boundary \cite{Nielsen:MCG}, and he essentially showed that irreducible elements are precisely the \emph{Anosov elements}, i.e. the elements that can be represented by \emph{Anosov homeomorphisms} of $T^2$. The far-reaching generalization of Nielsen's work, dealing with surfaces $\surface$ that admit complete hyperbolic metrics, was developed by Thurston \cite{Thurston:Surfaces}. Among other things, Thurston introduced the notion of \emph{pseudo-Anosov homeomorphisms} of $\surface$ and showed that irreducible elements of $\Mod(\surface)$ are precisely the \emph{pseudo-Anosov elements}, i.e. the elements that can be represented by pseudo-Anosov homeomorphisms of $\surface$.
% In this section, we give a leisurely review of Thurston's theory.

% Although a large portion of Thurston's exposition focused on connected surfaces, the ideas apply naturally but somewhat nontrivially to mapping class groups of disconnected surfaces as well; the detail for the case dealing with disconnected surface was carefully carried out in \cite{Birman-Lubotzky-McCarthy:AbSol}, \cite{Ivanov:Tits-Margulis-Soifer} and \cite{McCarthy:Tits}, as an essential step toward the proof of Tits alternative for mapping class groups.

%%%%%%%%%%%%%%%%%%%%%%%%%%%%%%%%%%%%%%%
\subsection{The Canonical Reduction of Reducible Elements}
\label{Can-Red: Elts} 
%%%%%%%%%%%%%%%%%%%%%%%%%%%%%%%%%%%%%%%

The notion of the the \emph{reduction} of an element of $\Mod(\surface)$ appeared in Thurston's theory, and it was further studied in \cite{Birman-Lubotzky-McCarthy:AbSol}. If $\calA$ is a reduction system for $\tau \in \Mod(\surface)$ and $\surface_{\calA}$ is the compactification of $\surface-\calA$, then $\tau$ induces a mapping class $\red_{\!\calA}(\tau) \in \Mod(\surface_{\calA})$; this element $\red_{\!\calA}(\tau)$ is called the \emph{reduction of $\tau$ along $\calA$}. Although the reduction is meant to be applied to reducible elements along non-empty reduction system, we allow a reduction system $\calA$ of an element $\tau$ to be empty for the logical convenience; if $\calA=\nil$, the reduction $\red_{\!\calA}(\tau)$ coincides with $\tau$.

Given an element $\tau \in \Mod(\surface)$, we can always take some positive power $\tau^N$ so that $\tau^N$ takes each connected component of $\surface$ to itself and each component of $\partial \surface$ to itself. If the restrictions of $\tau^N$ to some components are periodic, we can take yet higher power $\tau^{N'}$ so that the restriction of $\tau^{N'}$ to each component is either trivial or aperiodic.

\begin{itemize}
\item An element $\tau \in \Mod(\surface)$ is \emph{adequately reduced} if there is some power $\tau^N$, taking each component of $\surface$ to itself and each component of $\partial \surface$ to itself, such that the restriction of $\tau^N$ to each component of $\surface$ is either (i) trivial or (ii) aperiodic and irreducible.
\item Given an element $\tau \in \Mod(\surface)$, a (possibly empty) reduction system $\calA$ of $\tau$ is said to be an \emph{adequate reduction system} if the reduction $\red_{\!\calA}(\tau)$ is adequately reduced.
\end{itemize}

By definition, an empty reduction system for an adequately reduced element is indeed an adequate reduction system. Thurston observed that every element $\tau \in \Mod(S)$ either is adequately reduced or has a non-empty adequate reduction system \cite{Thurston:Surfaces}.

Generally, adequate reduction systems of $\tau$ are not unique. The crucial fact shown in \cite{Birman-Lubotzky-McCarthy:AbSol} is that there is a canonical choice of an adequate reduction system for each element $\tau \in \Mod(\surface)$; it is indeed the unique minimal adequate reduction system for the element. We call such system the \emph{canonical reduction system} for $\tau$, and denote it by $\calC$ (the reference to $\tau$ should always be clear from the context).

For a connected surface $\surface$, the canonical reduction system for $\tau \in \Mod(\surface)$ is empty if and only if $\tau$ is periodic or irreducible. More generally, for any surface $\surface$, which may be connected or disconnected, the canonical reduction system for $\tau \in \Mod(\surface)$ is empty if and only if $\tau$ is adequately reduced. Adequately reduced elements are more general than irreducible ones, but their important properties can be derived from those of irreducible ones.

%We remark that the above definition of adequately reduced elements is actually equivalent (by taking higher powers) to the original one, although it may appear slightly stronger at first.

%%%%%%%%%%%%%%%%%%%%%%%%%%%%%%%%%%%%%%%
\subsection{Pseudo-Anosov Elements for Connected Surfaces}
\label{Pseudo-Anosov: Conn}
%%%%%%%%%%%%%%%%%%%%%%%%%%%%%%%%%%%%%%%

Suppose for now that $\surface$ is a compact orientable \emph{connected} surface such that the interior of $\surface$ admits a complete hyperbolic metric, i.e. $\chi(\surface)<0$. We write $g(\surface)$ and $b(\surface)$ for the genus and the number of connected components of $\partial \surface$ respectively. Thurston introduced the space $\PML(\surface)$ of \emph{projective measured laminations}, equipped with a topology which makes it homeomorphic to a sphere of dimension $6g(\surface)+2b(\surface)-7$. With respect to this topology, the mapping class group $\Mod(\surface)$ acts naturally by homeomorphisms. $\PML(\surface)$ compactifies the Teichm\"{u}ller space of $\surface$, coherently with respect to the actions of $\Mod(\surface)$.

An element $\sigma \in \Mod(\surface)$ is said to be \emph{pseudo-Anosov} if the fixed-point set $\Fix(\sigma) \subset \PML(\surface)$ of the action of $\sigma$ on $\PML(\surface)$ consists of a pair of distinct measured laminations. The key property of a pseudo-Anosov element $\sigma$ is that its action on $\PML(\surface)$ exhibits the \emph{north-south dynamics} with one of the fixed points as an attractor, denoted by $\scrL^+_\sigma$, and the other as a repeller, denoted by $\scrL^-_\sigma$. More precisely, for any pair of disjoint neighborhoods $U^+_\sigma$ of $\scrL_\sigma^+$ and $U^-_\sigma$ of $\scrL_\sigma^-$, we have $\sigma^{\pm N} \big( \PML(\surface)-U^\mp_\sigma \big) \subset U^\pm_\sigma$ respectively for all sufficiently large $N$. 

%OLD: The attractor $\scrL_\sigma^+$ and the repeller $\scrL_\sigma^-$ will be called the \emph{attracting lamination} and the \emph{repelling lamination} for the pseudo-Anosov element $\sigma$ respectively.

We remark that pseudo-Anosov elements are always aperiodic. Furthermore, as mentioned earlier, one of the main results in Thurston's work \cite{Thurston:Surfaces} is that irreducible elements in $\Mod(\surface)$ are precisely pseudo-Anosov ones; see \cite{FLP:Thurston} for the detail.

% OLD: Hence, we have the celebrated \emph{Thurston classification}: every element $\sigma \in \Mod(\surface)$ is either (i) periodic, (ii) aperiodic and reducible, or (iii) pseudo-Anosov; moreover, these three cases are mutually exclusive.

%%%%%%%%%%%%%%%%%%%%%%%%%%%%%%%%%%%%%%%
\subsection{Pseudo-Anosov Elements for Disonnected Surfaces}
\label{Pseudo-Anosov: Disconn}
%%%%%%%%%%%%%%%%%%%%%%%%%%%%%%%%%%%%%%%

Let us now allow a compact orientable surface $\surface=\bigsqcup_{i=1}^{c(\surface)}\surface^i$ to be \emph{disconnected}, where $c(\surface)$ denotes the number of connected components of $\surface$ and $\surface^i$ denotes each connected component. As in the case of connected surfaces, we will assume that the interior of $\surface$ admits a complete hyperbolic metric, i.e. $\chi(\surface^i)<0$ for each $\surface^i$.

Every element $\sigma \in \Mod(\surface)$ has a nontrivial power that takes each connected component $\surface^i$ to itself. An element of $\Mod(\surface)$ is said to be \emph{pseudo-Anosov} if the restriction of such a power to each component $\surface^i$ is a pseudo-Anosov element in $\Mod(\surface^i)$. It is easy to see that this notion of pseudo-Anosov elements is well-defined, and they are necessarily aperiodic. Furthermore, one can check that irreducible elements of $\Mod(\surface)$ are precisely pseudo-Anosov ones. % Hence, the \emph{Thurston classification} is valid for disconnected surfaces as well: every element $\Mod(\surface)$ is either (i) periodic, (ii) aperiodic and reducible, or (iii) pseudo-Anosov, and these three cases are mutually exclusive.

With some care, the space $\PML(\surface)$ of projective measure laminations can be defined. As shown by Ivanov \cite{Ivanov:Tits-Margulis-Soifer} and McCarthy \cite{McCarthy:Tits}, the space $\PML(\surface)$ turns out to be homeomorphic to the \emph{join} of $\PML(\surface^i)$. There is a natural action of $\Mod(\surface)$ on $\PML(\surface)$, but the description of this action is rather cumbersome. For example, the action of pseudo-Anosov element $\sigma \in \Mod(\surface)$ still exhibits dynamical properties similar to the connected case, with an attracting neighborhood $U^+_\sigma$ and a repelling neighborhood $U^-_\sigma \subset \PML(\surface)$, but neither of them can arise as a neighborhood of a single projective measured lamination any more; under the homeomorphism $\PML(\surface) \cong \ast_{i=1}^{c(\surface)} \PML(\surface^i)$, certain simplexes in the join $\ast_{i=1}^{c(\surface)} \PML(\surface^i)$ play the roles of the attractor and the repeller.

For our purposes, it is convenient to look at the action of $\Mod(\surface)$ on an alternative space. Following Ivanov \cite{Ivanov:MCGBook}, we look at the natural action of $\Mod(\surface)$ on the space
\[
\PML^\sharp(\surface):=\bigsqcup_{i=1}^{c(\surface)} \PML(\surface^i)
\]
instead. If an element $\sigma \in \Mod(\surface)$ takes each component $\surface^i \subset \surface$ to itself, it takes each component $\PML(\surface^i) \subset \PML^\sharp(\surface)$ to itself under this action. If $\sigma$ is further assumed to be pseudo-Anosov, then its action on each component $\PML(\surface^i) \subset \PML^\sharp(\surface)$ exhibits the honest north-south dynamics with fixed points $\scrL^\pm_{\sigma|_{\surface^i}} \in \PML(\surface^i)$. Moreover, the points $\scrL^\pm_{\sigma|_{\surface^i}}$ are precisely the fixed points for the action of $\sigma$ on $\PML^\sharp(\surface)$. For any pair of disjoint neighborhoods $U^+_\sigma$ of $\big\{\scrL^+_{\sigma|_{\surface^i}} \big\}_{i=1}^{c(\surface)}$ and $U^-_\sigma$ of $\big\{\scrL^-_{\sigma|_{\surface^i}} \big\}_{i=1}^{c(\surface)}$, we have $\sigma^N \big( \PML^\sharp(\surface)-U^-_\sigma \big) \subset U^+_\sigma$ and $\sigma^{-N} \big( \PML^\sharp(\surface)-U^+_\sigma \big) \subset U^-_\sigma$ for all sufficiently large $N$.

%%%%%%%%%%%%%%%%%%%%%%%%%%%%%%%%%%%%%%%
%
\section{Subgroups of Mapping Class Groups}
\label{Subgroups}
%
%%%%%%%%%%%%%%%%%%%%%%%%%%%%%%%%%%%%%%%

Ivanov further generalized Thurston's theory to subgroups of $\Mod(\surface)$ in order to study their structures \cite{Ivanov:MCGBook}, and obtained a classification of subgroups that parallels the classification of elements of $\Mod(\surface)$.
\begin{itemize}
\item An isotopy class $\calA$ of an essential (possibly empty) multi-loop is said to be a \emph{reduction system} for $\Gamma$ if it is a reduction system for every element $\sigma \in \Gamma$, i.e. if every element $\sigma \in \Gamma$ fixes $\calA$.
\item A subgroup $\Gamma$ is said to be \emph{reducible} if it admits a \emph{non-empty} reduction system $\calA$; it is said to be \emph{irreducible} otherwise.
\end{itemize}
By definition, an element $\sigma \in \Mod(\surface)$ is reducible (resp. irreducible) if and only if the cyclic subgroup $\langle \sigma \rangle < \Mod(\surface)$ is reducible (resp. irreducible). More generally, the above definitions suggest that (i) the analogue of periodic elements are finite subgroups, (ii) the analogue of aperiodic reducible elements are infinite reducible subgroups, and (iii) the analogue of pseudo-Anosov elements are infinite irreducible subgroups.
% In this section, we review Ivanov's work on the structure of subgroups of $\Mod(\surface)$.

%%%%%%%%%%%%%%%%%%%%%%%%%%%%%%%%%%%%%%%
\subsection{Torsion-Free Finite-Index Subgroups}
\label{Mod-m}
%%%%%%%%%%%%%%%%%%%%%%%%%%%%%%%%%%%%%%%

Before we proceed, let us first consider a useful family of subgroups of $\Mod(\surface)$. For each integer $m \geq 3$, we consider the natural homomorphisms
\[
\Mod(\surface) \rightarrow \Aut(H_1(\surface;\Z)) \rightarrow \Aut(H_1(\surface;\Z/m\Z))
\]
and let $\Mod_{(m)}(\surface)$ be the kernel of this composition of homomorphisms. $\Mod_{(m)}(\surface)$ is clearly a finite-index normal subgroup of $\Mod(\surface)$, and a classical theorem of Serre \cite{Serre:Rigidite} says that $\Mod_{(m)}(\surface)$ is torsion-free. For each subgroup $\Gamma < \Mod(\surface)$, we set $\Gamma_{(m)}:=\Gamma \cap \Mod_{(m)}(\surface)$; it is a torsion-free finite-index normal subgroup of $\Gamma$.

Ivanov observed that elements of $\Mod_{(m)}(\surface)$ possess other useful properties. For every element $\sigma \in \Mod_{(m)}(\surface)$ and for every reduction system $\calA$ of $\sigma$, the following statements hold \cite[\S1.2]{Ivanov:MCGBook}:
\begin{itemize}
\item $\sigma$ takes each component of $\surface$ to itself, and each component of $\partial \surface$ to itself;
\item $\sigma$ takes each component of $\calA$ to itself with its orientation preserved;
\item hence, the reduction $\red_{\!\calA}(\sigma) \in \Mod(\surface_{\calA})$ takes each component of $\surface_{\calA}$ to itself, and each component of $\partial \surface_{\calA}$ to itself.
\end{itemize}
It follows that $\sigma$ can be restricted to each component of $\surface$, and $\red_{\!\calA}(\sigma)$ can be restricted to each component of $\surface_{\calA}$. Such restrictions have the following properties, which is much stronger than the aperiodicity of $\sigma \in \Mod_{(m)}(\surface)$ \cite[\S1.6]{Ivanov:MCGBook}:
\begin{itemize}
\item the restriction of $\sigma$ to each component of $\surface$ is trivial or aperiodic;
\item the restriction of $\red_{\!\calA}(\sigma)$ to each component of $\surface_{\calA}$ is trivial or aperiodic.
\end{itemize}
As an immediate consequence, we have the following observation: for any subgroup $\Gamma < \Mod(\surface)$, the normal subgroup $\Gamma_{(m)} \lhd$ can be restricted to each component of $\surface$, and such restrictions are torsion-free.

% Furthermore, combining this observation with Serre's theorem above and another classical theorem due to Nielsen \cite{Nielsen:Realization} and Fenchel \cite{Fenchel:NielsenRealization} (i.e. the Fenchel-Nielsen realization theorem), Ivanov showed \cite[\S1.6]{Ivanov:MCGBook} that, for every $\sigma \in \Mod_{(m)}(\surface)$ and for every reduction system $\calA$ of $\sigma$, the restriction of the reduction $\red_{\!\calA}(\sigma)$ to each component of $\surface_\calA$ is either trivial or aperiodic.

%there is a reduction system $\calA$ such that the reduction $\red_{\!\calA}(\sigma)$ satisfies the following condition:
%\begin{itemize}
%\item[(P)] $\red_{\!\calA}(\sigma) \in \Mod(\surface_{\calA})$ takes each component of $\partial \surface_{\calA}$ to itself (and hence each component of $\surface_{\calA}$ to itself), and the restriction of $\red_{\!\calA}(\sigma)$ to each component of $\surface_{\calA}$ is either identity or pseudo-Anosov.
%\end{itemize}
%Of course, the reduction system $\calA$ above is an adequate reduction system for $\sigma$. The point here is that, the reduction $\red_{\!\calA}(\sigma)$ need not be raised to a power in order to satisfy the condition of being adequately reduced.

The properties of $\Gamma_{(m)}$ mentioned in the above paragraphs are used extensively in Ivanov's proof of the analogue of the Tits alternative. In the proof of the analogue of \emph{Margulis--So\u{\i}fer theorem}, which is a significantly stronger theorem than the Tits alternative, Ivanov employed additional properties of the subgroup $\Gamma_{(m)}$ and its relationship to $\Gamma$ \cite[\S9]{Ivanov:MCGBook}. We extract one of such properties as the following lemma.

\begin{lem}[{c.f. \cite[\S9.10]{Ivanov:MCGBook}}]
\label{partition}
Let $\surface=\bigsqcup_{i=1}^{c(\surface)} \surface^i$ be a surface which is not necessarily connected. For every component $\surface^i$ and every element $\sigma \in \Gamma$, the restrictions of $\Gamma_{(m)}$ to $\surface^i$ and $\sigma(\surface^i)$ are isomorphic. Hence, if the restriction of $\Gamma_{(m)}$ to components $\surface^i$ and $\surface^j$ are not isomorphic, then no element of $\Gamma$ can take $\surface^i$ to $\surface^j$.
\end{lem}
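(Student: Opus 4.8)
The plan is to exploit the normality of $\Gamma_{(m)}$ in $\Gamma$ and to realize conjugation by an element $\sigma \in \Gamma$ as the geometric isomorphism induced by restricting a representative of $\sigma$ to the component $\surface^i$. Fix a component $\surface^i$ and an element $\sigma \in \Gamma$, and set $\surface^j := \sigma(\surface^i)$, which is again a component of $\surface$ since $\sigma$ is represented by a homeomorphism permuting the components. First I would choose a representing homeomorphism $f$ of $\sigma$; its restriction $f|_{\surface^i} \colon \surface^i \to \surface^j$ is a homeomorphism of surfaces, and hence induces an isomorphism $c_\sigma \colon \Mod(\surface^i) \to \Mod(\surface^j)$ sending the class of $g$ to the class of $f|_{\surface^i} \circ g \circ (f|_{\surface^i})^{-1}$. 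Any choice of representative $f$ yields such an isomorphism, and that is all we shall need.

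The heart of the argument is a single intertwining identity. Let $r_i \colon \Gamma_{(m)} \to \Mod(\surface^i)$ denote the restriction homomorphism $\gamma \mapsto \gamma|_{\surface^i}$, which is well-defined because every element of $\Gamma_{(m)} < \Mod_{(m)}(\surface)$ takes each component of $\surface$ to itself as recalled in \S\ref{Mod-m}, and likewise define $r_j$. Since $\Gamma_{(m)}$ is normal in $\Gamma$, for every $\gamma \in \Gamma_{(m)}$ the conjugate $\sigma\gamma\sigma^{-1}$ again lies in $\Gamma_{(m)}$ and so also preserves each component. Tracking a point of $\surface^j$ through $f^{-1}$, then through a representative of $\gamma$, then through $f$, I would verify the identity
\[
(\sigma \gamma \sigma^{-1})\big|_{\surface^j} = c_\sigma\big(\gamma|_{\surface^i}\big) \qquad \text{for all } \gamma \in \Gamma_{(m)},
\]
that is, $r_j$ precomposed with conjugation by $\sigma$ equals $c_\sigma \circ r_i$ as homomorphisms $\Gamma_{(m)} \to \Mod(\surface^j)$.

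With the identity in hand the conclusion is immediate. Conjugation by $\sigma$ permutes $\Gamma_{(m)}$ bijectively onto itself by normality, so $\sigma\Gamma_{(m)}\sigma^{-1} = \Gamma_{(m)}$; writing $\Gamma_{(m)}|_{\surface^i} := r_i(\Gamma_{(m)})$ and applying $r_j$ together with the intertwining identity gives
\[
\Gamma_{(m)}\big|_{\surface^j} = r_j(\Gamma_{(m)}) = r_j\big(\sigma \Gamma_{(m)} \sigma^{-1}\big) = c_\sigma\big(r_i(\Gamma_{(m)})\big) = c_\sigma\big(\Gamma_{(m)}|_{\surface^i}\big).
\]
As $c_\sigma$ is a group isomorphism of the ambient mapping class groups, its restriction to $\Gamma_{(m)}|_{\surface^i}$ is an injective homomorphism with image $\Gamma_{(m)}|_{\surface^j}$, hence an isomorphism $\Gamma_{(m)}|_{\surface^i} \cong \Gamma_{(m)}|_{\surface^j}$. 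This proves the first assertion, and the second sentence (``Hence\ldots'') is simply its contrapositive.

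I expect the only delicate point to be the careful bookkeeping behind the intertwining identity: since $\sigma$ may genuinely permute the components, one must keep track of $f|_{\surface^i}$ as a homeomorphism between the \emph{distinct} surfaces $\surface^i$ and $\surface^j$, and confirm that the induced map on mapping classes is compatible with composition and independent of the chosen isotopy representatives. Everything else reduces to the normality of $\Gamma_{(m)}$ in $\Gamma$ and the component-preservation property of $\Mod_{(m)}(\surface)$ recalled in \S\ref{Mod-m}.
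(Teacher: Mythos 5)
Your proposal is correct and follows essentially the same route as the paper: both arguments use the normality of $\Gamma_{(m)}$ in $\Gamma$ to see that conjugation by $\sigma$ preserves $\Gamma_{(m)}$, realize that conjugation geometrically by restricting a representative homeomorphism of $\sigma$ to $\surface^i$, and conclude that this induces an isomorphism $\Gamma_{(m)}|_{\surface^i} \cong \Gamma_{(m)}|_{\sigma(\surface^i)}$ with inverse given by conjugation by $\sigma^{-1}$. Your intertwining identity is just a more explicit bookkeeping of the paper's observation that $s \circ (t|_{\surface^i}) \circ s^{-1}$ represents $\sigma\tau\sigma^{-1}|_{\sigma(\surface^i)}$, so no changes are needed.
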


This elementary fact did not play any role in proving the Tits alternative for $\Mod(\surface)$, while it serves as a critical step in proving the Margulis--So\u{\i}fer theorem for $\Mod(\surface)$. The proof of our main theorem too make use of the above lemma. 

\begin{proof} % remove the proof for the preprint
The proof we present here is contained in \cite[\S9.10]{Ivanov:MCGBook} where the lemma was stated for a special case. Since $\Gamma_{(m)} \lhd \Gamma$, the conjugate of an element $\tau \in \Gamma_{(m)}$ by an element $\sigma \in \Gamma$ must again belong to $\Gamma_{(m)}$; if $s$ and $t$ are homeomorphisms representing $\sigma$ and $\tau$ respectively, then $s \circ t \circ s^{-1}$ represents the element $\sigma \tau \sigma^{-1} \in \Gamma_{(m)}$. The restriction $\tau|_{\surface^i}$ is represented by $t|_{\surface^i}$, and the restriction $\sigma \tau \sigma^{-1}|_{\sigma(\surface^i)}$ is represented by $s \circ (t|_{\surface^i}) \circ s^{-1}$. Thus, the conjugation by $\sigma$ defines a homomorphism that takes the restriction $\Gamma_{(m)}|_{\surface^i}$ to the restriction $\Gamma_{(m)}|_{\sigma(\surface^i)}$. Clearly, this homomorphism is an isomorphism, with the inverse given by the conjugation by $\sigma^{-1}$.
\end{proof}

%%%%%%%%%%%%%%%%%%%%%%%%%%%%%%%%%%%%%%%
\subsection{Canonical Reduction of Reducible Subgroups}
\label{Can-Red: Subgrps}
%%%%%%%%%%%%%%%%%%%%%%%%%%%%%%%%%%%%%%%

The theory regarding the \emph{reduction} of a subgroup of $\Mod(\surface)$ can be developed in essentially the same manner as it was done for an element of $\Mod(\surface)$, with suitable modifications. If $\calA$ is a reduction system for $\Gamma<\Mod(\surface)$, then the reduction $\red_{\!\calA}(\sigma) \in \Mod(\surface_{\calA})$ is well-defined for all $\sigma \in \Mod(\surface)$, where $\surface_{\calA}$ is the compactification of $\surface-\calA$ as before. The assignment $\sigma \mapsto \red_{\!\calA}(\sigma)$ indeed defines the \emph{reduction homomorphism}
\[
\red_{\!\calA}: \Gamma \rightarrow \Mod(\surface_{\calA})
\]
whose kernel is a free-abelian group generated by Dehn twists along some components of $\calA$; the image $\red_{\!\calA}(\Gamma)$ is called the \emph{reduction of $\Gamma$ along $\calA$}. As before, a reduction system $\calA$ of a subgroup $\Gamma$ is allowed to be empty; if $\calA=\nil$, the reduction $\Gamma_\calA$ coincides with $\Gamma$.

Recall from \S\ref{Can-Red: Elts} that, for an element $\tau \in \Mod(\surface)$, taking a finite power $\tau^N$ was essential in understanding the reduction systems of $\tau$. Ivanov observed that, for a subgroup $\Gamma < \Mod(\surface)$, the correct analogue is passing to a finite-index normal subgroup $\Gamma' \lhd \Gamma$. To make this analogy apparent, we introduce the following notions, which did not appear explicitly in Ivanov's exposition:

\begin{itemize}
\item A subgroup $\Gamma < \Mod(\surface)$ is \emph{adequately reduced} if there is a \emph{finite-index normal subgroup} $\Gamma' \lhd \Gamma$, consisting of elements that take each component of $\surface$ to itself and each component of $\partial \surface$ to itself, such that the restriction of $\Gamma'$ to each component is either (i) trivial or (ii) infinite and irreducible. % (and hence contains a pseudo-Anosov element; see \S\ref{Irred: Conn}).
\item Given a subgroup $\Gamma < \Mod(\surface)$, a (possibly empty) reduction system $\calA$ of $\Gamma$ is said to be an \emph{adequate reduction system} if the reduction $\red_{\!\calA}(\Gamma)$ is adequately reduced.
\end{itemize}

The work of Ivanov shows that for any subgroup $\Gamma < \Mod(\surface)$ there is a canonical choice of reduction system \cite[\S7.2-7.4]{Ivanov:MCGBook}, which is indeed the unique minimal adequate reduction system \cite[\S7.16 and \S7.18]{Ivanov:MCGBook}; we call this system the \emph{canonical reduction system} for $\Gamma$, and denote it by $\calC$ (the reference to $\Gamma$ should always be clear from the context). Although we will not go into the detail of the definition of the canonical reduction system, we note that the definition is invariant under passing to finite-index normal subgroup.

\begin{lem}
\label{adeq-red and Gamma_m}
Let $\surface=\bigsqcup_{i=1}^{c(\surface)}\surface^i$ be a compact orientable surface. For any subgroup $\Gamma < \Mod(\surface)$, the following are equivalent:
\begin{itemize}
\item[(1)] the canonical reduction system $\calC$ for $\Gamma$ is empty;
\item[(2)] $\Gamma$ is adequately reduced;
\item[(3)] some finite-index normal subgroups of $\Gamma$ are adequately reduced;
\item[(4)] every finite-index normal subgroup of $\Gamma$ is adequately reduced;
\item[(5)] for some integers $m \geq 3$, the restriction of $\Gamma_{(m)}$ to each component of $\surface$ is either (i) trivial or (ii) infinite and irreducible;
\item[(6)] for every integer $m \geq 3$, the restriction of $\Gamma_{(m)}$ to each component of $\surface$ is either (i) trivial or (ii) infinite and irreducible.
\end{itemize}
\end{lem}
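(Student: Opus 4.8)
The plan is to route every condition through the canonical reduction system $\calC$ of $\Gamma$, exploiting its minimality together with its invariance under finite-index normal subgroups. The equivalence (1)$\iff$(2) is immediate from the characterization of $\calC$ as the \emph{unique minimal} adequate reduction system: if $\calC=\nil$ then $\calC$ is adequate, so $\red_{\!\nil}(\Gamma)=\Gamma$ is adequately reduced; conversely, if $\Gamma$ is adequately reduced then $\nil$ is an adequate reduction system, and minimality forces $\calC=\nil$. To fold in (3) and (4), I would observe that for any finite-index normal $\Gamma'\lhd\Gamma$ the invariance gives $\calC(\Gamma')=\calC(\Gamma)$, so applying (1)$\iff$(2) to $\Gamma'$ shows that $\Gamma'$ is adequately reduced precisely when $\calC(\Gamma)=\nil$. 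Since $\Gamma$ is itself a finite-index normal subgroup of $\Gamma$, both (3) and (4) are thereby equivalent to $\calC(\Gamma)=\nil$, i.e. to (1) and (2).

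The substance of the proof is tying (5) and (6) to this block through the torsion-free subgroups $\Gamma_{(m)}$. I would first show, for each fixed $m\geq 3$, that the stated condition on $\Gamma_{(m)}$ is equivalent to \emph{$\Gamma_{(m)}$ being adequately reduced}. The forward implication is immediate: taking $\Gamma'=\Gamma_{(m)}$ in the definition of adequately reduced works, because every element of $\Gamma_{(m)}<\Mod_{(m)}(\surface)$ preserves each component of $\surface$ and of $\partial\surface$. For the converse, suppose $\Gamma_{(m)}$ is adequately reduced, witnessed by a finite-index normal $\Gamma''\lhd\Gamma_{(m)}$; on each component $\surface^i$ the restriction $\Gamma''|_{\surface^i}$ then has finite index in $\Gamma_{(m)}|_{\surface^i}$. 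If $\Gamma''|_{\surface^i}$ is trivial, then $\Gamma_{(m)}|_{\surface^i}$ is finite, hence trivial since it is torsion-free (recorded in \S\ref{Mod-m}); if $\Gamma''|_{\surface^i}$ is infinite and irreducible, then $\Gamma_{(m)}|_{\surface^i}$ is infinite, and it is irreducible because any nonempty common reduction system for the larger group would also reduce its subgroup $\Gamma''|_{\surface^i}$. Thus the condition in (5)/(6) holds for that $m$.

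Finally, applying the equivalence established in the first paragraph to the finite-index normal subgroup $\Gamma_{(m)}\lhd\Gamma$ shows that ``$\Gamma_{(m)}$ is adequately reduced'' is equivalent to $\calC(\Gamma)=\nil$, with no dependence on $m$. Combined with the previous paragraph, this shows that for \emph{every} $m\geq 3$ the condition on $\Gamma_{(m)}$ is equivalent to (1); since some $m\geq 3$ exists, this yields both (5)$\iff$(1) and (6)$\iff$(1), closing the chain. The step I expect to be the main obstacle is the converse half of the $\Gamma_{(m)}$-equivalence---upgrading adequacy witnessed by a possibly proper subgroup $\Gamma''$ to a statement about $\Gamma_{(m)}$ itself---where the two special features of $\Gamma_{(m)}$ are indispensable: torsion-freeness of the component restrictions eliminates nontrivial finite images, and the elementary fact that irreducibility is inherited upward from a finite-index subgroup preserves irreducibility.
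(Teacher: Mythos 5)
Your proposal is correct and follows essentially the same route as the paper: the block (1)--(4) via minimality and finite-index-normal invariance of $\calC$, and the passage to (5)--(6) via exactly the two ingredients the paper uses, namely torsion-freeness of the component restrictions of $\Gamma_{(m)}$ to rule out nontrivial finite images and the upward inheritance of irreducibility from a finite-index subgroup. Your repackaging of the last step as ``condition on $\Gamma_{(m)}$ $\Leftrightarrow$ $\Gamma_{(m)}$ adequately reduced $\Leftrightarrow$ (1)'' is only a cosmetic reorganization of the paper's implication (2) $\Rightarrow$ (6).
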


\begin{proof}
(1) $\Leftrightarrow$ (2) follows from the fact that $\calC$ is the unique minimal adequate reduction system. (3) $\Rightarrow$ (2) and (2) $\Rightarrow$ (4) follows from the invariance of $\calC$ under passing to finite-index normal subgroup and the equivalence (1) $\Leftrightarrow$ (2). With the trivial implication (4) $\Rightarrow$ (3), we have the equivalence of (1)--(4).

Since (6) $\Rightarrow$ (5) is trivial and (5) $\Rightarrow$ (2) follows from the definition, it remains to check the implication (2) $\Rightarrow$ (6). Suppose $\Gamma$ is adequately reduced, and take an integer $m \geq 3$. By the equivalence (2) $\Leftrightarrow$ (4), $\Gamma_{(m)}$ is adequately reduced, i.e. there exists a finite-index normal subgroup $\Gamma' \lhd \Gamma_{(m)}$ such that the restriction of $\Gamma'$ to each component is either trivial or irreducible. Recall that the restriction of $\Gamma_{(m)}$ to each component is torsion-free; hence, if the restriction of $\Gamma'$ to a component $\surface^i$ is trivial, then the restriction of $\Gamma_{(m)}$ must also be trivial. If the restriction of $\Gamma'$ to a component $\surface^j$ is irreducible, then clearly the restriction of the larger group $\Gamma_{(m)}$ to $\surface^j$ must also be irreducible. This completes the proof.
\end{proof}

%%%%%%%%%%%%%%%%%%%%%%%%%%%%%%%%%%%%%%%
\subsection{Adequately Reduced Subgroups for Connected Surfaces}
\label{Adeq-Red: Conn}
%%%%%%%%%%%%%%%%%%%%%%%%%%%%%%%%%%%%%%%

Let $\surface$ be a compact orientable \emph{connected} surface with $\chi(\surface)<0$, and let $\Gamma < \Mod(\surface)$ be an adequately reduced subgroup. $\Gamma$ is either a finite subgroup or an infinite irreducible subgroup. An infinite irreducible subgroup always contains a pseudo-Anosov element \cite[\S5.9 and \S7.14]{Ivanov:MCGBook}. Furthermore, if $\Gamma$ is infinite, irreducible, and not virtually infinite-cyclic, then $\Gamma_{(m)} \lhd \Gamma$ contains two pseudo-Anosov elements $\sigma$ and $\tau$ such that $\Fix(\sigma)\cap\Fix(\tau)=\nil$ in $\PML(\surface)$ \cite[\S5.12 and \S7.15]{Ivanov:MCGBook}. These facts were proved with the following lemmas, which we cite here for reference:

\begin{lem}[{\cite[\S5.1]{Ivanov:MCGBook}}]
\label{constructing pseudo-Anosov for conn}
Let $\Gamma<\Mod(\surface)$, and suppose that $\sigma, \tau \in \Gamma_{(m)}$ are pseudo-Anosov elements such that $\Fix(\sigma) \cap \Fix(\tau)=\nil$ in $\PML(\surface)$. Then, the elements $\sigma^p\tau^q$ is pseudo-Anosov for all sufficiently large $p$ and $q$.
\end{lem}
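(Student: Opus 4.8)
The plan is to realize $g:=\sigma^p\tau^q$ as a map with \emph{source--sink} (north--south) dynamics on $\PML(\surface)$ and then to identify such dynamics with the pseudo-Anosov property. The hypothesis $\Fix(\sigma)\cap\Fix(\tau)=\nil$ says precisely that the four laminations $\scrL^+_\sigma,\scrL^-_\sigma,\scrL^+_\tau,\scrL^-_\tau$ are pairwise distinct points of $\PML(\surface)$. Since $\PML(\surface)$ is a sphere, hence Hausdorff, I would first fix four pairwise disjoint closed neighborhoods $U^+_\sigma,U^-_\sigma,U^+_\tau,U^-_\tau$ of these four points.

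Next, I would chain the north--south dynamics of $\sigma$ and $\tau$. By the key property of pseudo-Anosov elements, for all sufficiently large $p$ we have $\sigma^p(\PML(\surface)-U^-_\sigma)\subset U^+_\sigma$ and $\sigma^{-p}(\PML(\surface)-U^+_\sigma)\subset U^-_\sigma$, and likewise for $\tau^q$ with the $\tau$-neighborhoods. Now take a point $z\notin U^-_\tau$: for $q$ large, $\tau^q(z)\in U^+_\tau$, and since $U^+_\tau\cap U^-_\sigma=\nil$ this point lies outside $U^-_\sigma$, so for $p$ large $\sigma^p\tau^q(z)\in U^+_\sigma$. Thus $g(\PML(\surface)-U^-_\tau)\subset U^+_\sigma$ for all sufficiently large $p,q$; the symmetric computation with $g^{-1}=\tau^{-q}\sigma^{-p}$ gives $g^{-1}(\PML(\surface)-U^+_\sigma)\subset U^-_\tau$. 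Because $U^+_\sigma\subset\PML(\surface)-U^-_\tau$, iterating the first inclusion shows that the sets $g^n(\PML(\surface)-U^-_\tau)$ decrease to a nonempty compact $g$-invariant attractor $A\subset U^+_\sigma$, and dually to a repeller $R\subset U^-_\tau$, with every point outside $U^-_\tau$ attracted to $A$.

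Finally I would convert this dynamical picture into the pseudo-Anosov conclusion. First, $g$ is aperiodic: it lies in the torsion-free group $\Gamma_{(m)}$, and it is nontrivial because, for instance, $g(\scrL^+_\tau)\in U^+_\sigma$ while $\scrL^+_\tau\in U^+_\tau$ and $U^+_\sigma\cap U^+_\tau=\nil$. It then remains to see that $g$ is irreducible, for on a connected surface the aperiodic irreducible elements are exactly the pseudo-Anosov ones. Any fixed point of $g$ outside $U^-_\tau$ satisfies $a=g^n(a)\in A$, and any fixed point inside $U^-_\tau$ lands dually in $R$, so $\Fix(g)\subset A\cup R$; I would argue that $A$ and $R$ are in fact single \emph{filling} laminations and that $\Fix(g)=\{\scrL^+_g,\scrL^-_g\}$ consists of exactly these two points, which forces the canonical reduction system of $g$ to be empty and hence $g$ to be pseudo-Anosov. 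The main obstacle is precisely this last upgrade: pure neighborhood ping-pong only exhibits $A$ and $R$ as invariant \emph{sets}, and since projective classes of multicurves are dense in $\PML(\surface)$ one cannot rule out an invariant multicurve merely by shrinking neighborhoods. Resolving it requires the finer Thurston--Ivanov analysis of how $g$ contracts on the space of measured laminations, rather than only on $\PML(\surface)$, showing the attracting lamination is minimal and fills $\surface$; this is the technical heart of the statement and is where I would invoke the lamination-level contraction estimates underlying \cite[\S5.1]{Ivanov:MCGBook}.
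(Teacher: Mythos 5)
First, a remark on the comparison itself: the paper does not prove this lemma --- it is imported verbatim from Ivanov's book, with the citation to \cite[\S5.1]{Ivanov:MCGBook} standing in for the proof. So your attempt can only be measured against Ivanov's argument, not against anything in the text.

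Your setup is the standard one and is executed correctly: the four points $\scrL^{\pm}_\sigma,\scrL^{\pm}_\tau$ are pairwise distinct, the chained north--south dynamics yields $g(\PML(\surface)-U^-_\tau)\subset U^+_\sigma$ and $g^{-1}(\PML(\surface)-U^+_\sigma)\subset U^-_\tau$ for $g=\sigma^p\tau^q$ with $p,q$ large, and aperiodicity of $g$ follows from torsion-freeness of $\Gamma_{(m)}$ together with your observation that $g$ moves $\scrL^+_\tau$. The genuine gap is the one you name yourself: passing from ``$g$ has an invariant attractor in $U^+_\sigma$ and an invariant repeller in $U^-_\tau$'' to ``$g$ is pseudo-Anosov.'' Since $p$ and $q$ are chosen \emph{after} the neighborhoods are fixed, you cannot shrink $U^+_\sigma$ and $U^-_\tau$ a posteriori for a fixed $g$; and since projective classes of multicurves are dense in $\PML(\surface)$, nothing in the neighborhood-level ping-pong prevents $g$ from fixing an essential multicurve whose projective class happens to lie in $U^+_\sigma$. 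Excluding this requires quantitative control on $\ML(\surface)$ rather than $\PML(\surface)$: roughly, one shows $g^n\beta$ converges to an attracting lamination $\scrL$ with geometric growth of intersection numbers, which contradicts the identity $i(\alpha,g^n\beta)=i(g^{-n}\alpha,\beta)=i(\alpha,\beta)$ for a $g$-invariant class $\alpha$ unless $i(\alpha,\scrL)=0$, and one must then also establish that $\scrL$ fills $\surface$. Your write-up correctly diagnoses this as the technical heart but then defers it to \cite[\S5.1]{Ivanov:MCGBook}, which is the very statement being proved; as written, the proposal is therefore an accurate outline with the decisive step missing rather than a complete proof.
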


\begin{lem} [{\cite[\S5.11]{Ivanov:MCGBook}}]
\label{fixed-point sets of pseudo-Anosov}
Let $\Gamma<\Mod(\surface)$, and suppose that $\sigma, \tau \in \Gamma_{(m)}$ are pseudo-Anosov elements. Then, $\Fix(\sigma)=\Fix(\tau)$ or $\Fix(\sigma)\cap\Fix(\tau)=\nil$ in $\PML(\surface)$.
\end{lem}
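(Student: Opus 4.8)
The plan is to deduce the dichotomy from the structural classification of irreducible subgroups recalled in \S\ref{Adeq-Red: Conn}, rather than from a direct dynamical argument. Throughout, recall that a pseudo-Anosov element is irreducible and aperiodic, that $\Fix(\sigma)=\{\scrL^+_\sigma,\scrL^-_\sigma\}$ consists of exactly two points of $\PML(\surface)$, and that every nonzero power of $\sigma$ has the same fixed-point set (its north-south dynamics is simply iterated). Suppose $\Fix(\sigma)\cap\Fix(\tau)\neq\nil$ and fix a common fixed point $\scrL\in\Fix(\sigma)\cap\Fix(\tau)$; the goal is to show $\Fix(\sigma)=\Fix(\tau)$. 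Note that no preliminary case analysis on which of the two laminations coincide is needed: I will only use that $H:=\langle\sigma,\tau\rangle<\Mod(\surface)$ fixes the single point $\scrL\in\PML(\surface)$. This group $H$ is infinite, since it contains the aperiodic element $\sigma$, and it is irreducible, since any reduction system for $H$ would in particular be a reduction system for the irreducible pseudo-Anosov element $\sigma$, forcing it to be empty.

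The key step is to show that $H$ is virtually infinite cyclic. I would argue by contradiction: if $H$ were infinite, irreducible, and \emph{not} virtually infinite-cyclic, then by the facts recalled in \S\ref{Adeq-Red: Conn} (\cite[\S5.12 and \S7.15]{Ivanov:MCGBook}) the subgroup $H_{(m)}$ would contain two pseudo-Anosov elements $\alpha,\beta$ with $\Fix(\alpha)\cap\Fix(\beta)=\nil$. But $\alpha,\beta\in H$ must fix $\scrL$, so $\scrL\in\Fix(\alpha)\cap\Fix(\beta)$, a contradiction. Hence $H$ is virtually infinite cyclic.

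To conclude, I would use that an infinite virtually cyclic group contains a normal infinite cyclic subgroup $C$ of finite index (take the normal core of any finite-index copy of $\Z$, which is itself a nontrivial, hence infinite cyclic, subgroup of that copy). Each of the infinite-order elements $\sigma,\tau$ has a nonzero power lying in $C\cong\Z$, so there exist integers $p,q\neq 0$ with $\sigma^p=\tau^q$; the hypothesis $\sigma,\tau\in\Gamma_{(m)}$, which is torsion-free by Serre's theorem, also rules out any torsion obstruction to passing from commensurability to such an exact equality. Since nonzero powers of a pseudo-Anosov element have the same fixed-point set, it follows that $\Fix(\sigma)=\Fix(\sigma^p)=\Fix(\tau^q)=\Fix(\tau)$, which establishes the dichotomy.

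The main obstacle is the key step: extracting virtual cyclicity of $H$ from the mere existence of a common fixed point. I route this through the classification of irreducible subgroups, whose substantive content is precisely that a ``large'' irreducible subgroup must contain two pseudo-Anosov elements with disjoint fixed-point sets. A purely dynamical alternative — producing the infinitely many conjugates $\sigma^n\tau\sigma^{-n}$, whose repelling laminations $\sigma^n(\scrL^-_\tau)$ would accumulate at a point of $\Fix(\sigma)$ — looks tempting given the north-south dynamics provided in \S\ref{Pseudo-Anosov: Conn}, but turning this accumulation into a contradiction would require an additional proper-discontinuity input on $\PML(\surface)$ that is less readily available in the present setup, so I expect the structural route to be both cleaner and fully supported by the cited results.
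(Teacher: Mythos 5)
The paper does not actually prove this lemma: it is quoted verbatim from Ivanov's book and used as a black box, so the comparison here is against the standard proof in that source. Your attempt has a genuine circularity in its key step. You deduce that $H=\langle\sigma,\tau\rangle$ is virtually infinite cyclic by invoking the fact (recalled in \S\ref{Adeq-Red: Conn} from \cite[\S5.12 and \S7.15]{Ivanov:MCGBook}) that an infinite, irreducible, non-virtually-cyclic subgroup has two pseudo-Anosov elements with \emph{disjoint} fixed-point sets. But the proof of that fact runs: take a pseudo-Anosov $\alpha$ in the group, find $\gamma$ with $\gamma(\Fix(\alpha))\neq\Fix(\alpha)$, observe that $\gamma\alpha\gamma^{-1}$ is pseudo-Anosov with fixed set $\gamma(\Fix(\alpha))$, and then apply \emph{precisely the present lemma} to upgrade ``distinct'' to ``disjoint.'' That last upgrade is the entire content of Lemma~\ref{fixed-point sets of pseudo-Anosov}; with only the ``distinct'' version available, your contradiction evaporates, since the two elements $\alpha,\beta$ produced inside $H$ could both fix $\scrL$ while having different second fixed points. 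So the structural route cannot be made non-circular without an independent proof of the \S5.12 statement, and none is available in the paper or in the cited development.

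The missing ingredient is the one Ivanov actually uses: the invariant laminations of a pseudo-Anosov are filling and uniquely ergodic, and consequently the stabilizer in $\Mod(\surface)$ of the \emph{single} point $\scrL^+_\sigma\in\PML(\surface)$ already coincides with the stabilizer of the pair $\{\scrL^+_\sigma,\scrL^-_\sigma\}$ (equivalently, of the Teichm\"uller axis of $\sigma$), and this stabilizer is virtually infinite cyclic. Granting that, any pseudo-Anosov $\tau$ sharing one fixed point with $\sigma$ lies in a virtually cyclic group together with $\sigma$, and your closing argument --- pass to the infinite cyclic normal core, extract common powers $\sigma^p=\tau^q$ with $p,q\neq 0$, and conclude $\Fix(\sigma)=\Fix(\sigma^p)=\Fix(\tau^q)=\Fix(\tau)$ --- is exactly the right way to finish and is correctly executed. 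In short, the endgame is fine; what needs repair is the reduction of ``$\tau$ fixes one point of $\Fix(\sigma)$'' to ``$\tau$ lies in a virtually cyclic group containing $\sigma$,'' which requires the unique-ergodicity/axis input rather than the classification of irreducible subgroups.
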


In order to study the girth of adequately reduced subgroups using the Infinite Girth Criterion, we need a slightly stronger fact than Ivanov's results. The following theorem summarizes and refines Ivanov's results.

\begin{thm} \label{trichotomy: adeq-red for conn}
Fix an integer $m \geq 3$. Let $\surface$ be a connected surface with $\chi(\surface)<0$, and let $\Gamma < \Mod(\surface)$ be an adequately reduced subgroup. Then, $\Gamma$ satisfies exactly one of the following:
\begin{itemize}
\item[(0)] $\Gamma_{(m)} \lhd \Gamma$ is trivial, and $\Gamma$ is finite;
\item[(1)] $\Gamma_{(m)} \lhd \Gamma$ is infinite-cyclic, and $\Gamma$ is virtually infinite-cyclic;
\item[(2)] For any (possibly empty) finite collection $\varphi_1, \cdots , \varphi_n \in \Gamma_{(m)} \lhd \Gamma$ of pseudo-Anosov elements, there exists another pseudo-Anosov element $\psi \in \Gamma_{(m)} \lhd \Gamma$ such that $\Fix(\varphi_j)\cap\Fix(\psi)=\nil$ in $\PML(\surface)$ for all $j$.
\end{itemize}
\end{thm}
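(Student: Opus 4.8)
The plan is to run everything through the torsion-free finite-index normal subgroup $\Gamma_{(m)}$ and to let the trichotomy mirror the isomorphism type of $\Gamma_{(m)}$, which, being torsion-free, is either trivial, infinite-cyclic, or infinite and non-cyclic. Since $\surface$ is connected and $\Gamma$ is adequately reduced, $\Gamma$ is finite or infinite irreducible, and because $\Gamma_{(m)}$ has finite index in $\Gamma$ and is torsion-free, $\Gamma_{(m)}$ is trivial precisely when $\Gamma$ is finite; this is case (0). If $\Gamma_{(m)}$ is nontrivial it is infinite, so $\Gamma$ is infinite irreducible; if moreover $\Gamma_{(m)}$ is infinite-cyclic then $\Gamma$ is virtually infinite-cyclic, which is case (1). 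The real content is to prove that when $\Gamma_{(m)}$ is infinite and non-cyclic --- equivalently, $\Gamma$ is infinite, irreducible, and not virtually infinite-cyclic --- condition (2) holds.

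For this case, I would first invoke the result recalled in \S\ref{Adeq-Red: Conn} \cite[\S5.12, \S7.15]{Ivanov:MCGBook} to obtain pseudo-Anosov elements $\sigma, \tau \in \Gamma_{(m)}$ with $\Fix(\sigma) \cap \Fix(\tau) = \nil$. The key step is to manufacture infinitely many pseudo-Anosov elements of $\Gamma_{(m)}$ with pairwise distinct fixed-point sets, which I do by conjugation: put $\psi_k := \tau^k \sigma \tau^{-k} \in \Gamma_{(m)}$, a pseudo-Anosov element with $\Fix(\psi_k) = \tau^k \Fix(\sigma)$. To see the $\Fix(\psi_k)$ are pairwise distinct, suppose $\tau^k \Fix(\sigma) = \tau^{k'} \Fix(\sigma)$ for some $k \neq k'$; then $\tau^{k-k'}$ preserves the two-point set $\Fix(\sigma)$, so some nonzero power of $\tau$ fixes a point of $\Fix(\sigma)$. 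But every nonzero power of the pseudo-Anosov element $\tau$ has fixed-point set exactly $\Fix(\tau)$, which is disjoint from $\Fix(\sigma)$, a contradiction.

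To conclude (2), let $\varphi_1, \cdots, \varphi_n \in \Gamma_{(m)}$ be pseudo-Anosov. By Lemma~\ref{fixed-point sets of pseudo-Anosov}, for each $j$ and $k$ either $\Fix(\psi_k) = \Fix(\varphi_j)$ or $\Fix(\psi_k) \cap \Fix(\varphi_j) = \nil$; since the $\Fix(\psi_k)$ are pairwise distinct, each $\varphi_j$ rules out at most one index $k$, so at most $n$ indices are excluded, and any remaining $k$ yields $\psi := \psi_k$ with $\Fix(\psi) \cap \Fix(\varphi_j) = \nil$ for every $j$ (the empty collection being trivial). For mutual exclusivity, I would observe that (2) produces two pseudo-Anosov elements with disjoint fixed-point sets, which is impossible in case (0) (where $\Gamma_{(m)}$ has no pseudo-Anosov element, so even the empty collection fails) and in case (1) (where all pseudo-Anosov elements are powers of one generator and thus share a fixed-point set); since (0) and (1) are separated by triviality of $\Gamma_{(m)}$, the three cases are pairwise exclusive. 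The main obstacle is the distinctness of the sets $\Fix(\psi_k)$; granting the conjugation construction together with the fact that a pseudo-Anosov power has no invariant projective lamination outside its own fixed pair, everything else is bookkeeping.
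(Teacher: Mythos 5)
Your proposal is correct, and cases (0), (1), and the mutual exclusivity are handled just as in the paper; but your proof of the key case (2) takes a genuinely different route. The paper's argument is dynamical: starting from the pair $\sigma,\tau$ with disjoint fixed sets, it chooses small attracting/repelling neighborhoods disjoint from the relevant $U_j$, invokes Lemma~\ref{constructing pseudo-Anosov for conn} to make $\psi=\sigma^p\tau^q$ pseudo-Anosov for large $p,q$, and then traces orbits to show $\scrL^+_\psi\in\overline{U^+_\sigma}$ and $\scrL^-_\psi\in\overline{U^-_\tau}$, so that each $\Fix(\varphi_j)$, missing at least one of $\overline{U_\sigma},\overline{U_\tau}$ by Lemma~\ref{fixed-point sets of pseudo-Anosov}, misses a point of $\Fix(\psi)$ and hence all of it. You instead produce the infinite family $\psi_k=\tau^k\sigma\tau^{-k}$, show the sets $\Fix(\psi_k)=\tau^k\Fix(\sigma)$ are pairwise distinct (your argument that a power of $\tau$ preserving the two-point set $\Fix(\sigma)$ would force a nonzero power of $\tau$ to fix a point outside $\Fix(\tau)$ is sound, given the north-south dynamics), and then finish by pigeonhole via Lemma~\ref{fixed-point sets of pseudo-Anosov}: each $\varphi_j$ can match at most one $\Fix(\psi_k)$, so all but finitely many $k$ work. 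Your route is more elementary --- it needs neither Lemma~\ref{constructing pseudo-Anosov for conn} nor any neighborhood bookkeeping --- and it is close in spirit to the conjugation trick in Ivanov's treatment (produce one new pseudo-Anosov element by conjugating $\sigma$ off a finite configuration). What the paper's construction buys in exchange is control over \emph{where} the fixed points of the new element $\psi$ lie (in prescribed small neighborhoods of $\scrL^+_\sigma$ and $\scrL^-_\tau$); that extra localization is not needed for the statement of Theorem~\ref{trichotomy: adeq-red for conn} itself, and indeed the applications in \S\ref{girth alternative} only use disjointness of fixed sets, so your argument would serve equally well there.
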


\begin{rem}
There exist finite irreducible subgroups which consist entirely of reducible elements \cite{Gilman:FiniteIrreducible}.
\end{rem}

\begin{proof}
Since $\surface$ is connected, Lemma~\ref{adeq-red and Gamma_m} says that $\Gamma_{(m)}$ is either trivial or irreducible. If $\Gamma_{(m)}$ is trivial, $\Gamma$ is finite; if $\Gamma_{(m)}$ is infinite-cyclic, then $\Gamma$ is virtually infinite-cyclic.
Hence, we assume $\Gamma_{(m)}$ is infinite, irreducilbe, and not infinite-cyclic. As we have already mentioned, there exist two pseudo-Anosov elements $\sigma$ and $\tau$ in $\Gamma_{(m)}$ such that $\Fix(\sigma)\cap\Fix(\tau)=\nil$ in $\PML(\surface)$ \cite[\S5.12]{Ivanov:MCGBook}. We need to show that the slightly stronger condition (2) is satisfied.

Suppose we are given a finite collection $\varphi_1, \cdots , \varphi_n \in \Gamma_{(m)} \lhd \Gamma$ of pseudo-Anosov elements. For each $\varphi_j$, we let $U^+_j, U^-_j \subset \PML(\surface)$ be attracting and repelling neighborhoods of $\varphi_j$ such that $U^+_j \cap U^-_j=\nil$; we set $U_j=U^+_j \cup U^-_j$. Similarly, we let $U^+_\sigma, U^-_\sigma, U^+_\tau, U^-_\tau \subset \PML(\surface)$ be attracting and repelling neighborhoods of $\sigma$, $\tau$ respectively such that $U^+_\sigma \cap U^-_\sigma=\nil$ and $U^+_\tau \cap U^-_\tau=\nil$; again, we set $U_\sigma=U^+_\sigma \cup U^-_\sigma$ and $U_\tau=U^+_\tau \cup U^-_\tau$. We can take these neighborhoods to be sufficiently small so that the following holds:
\begin{itemize}
\item $U_j \cap U_k=\nil$ whenever $\Fix(\varphi_j) \cap \Fix(\varphi_k)=\nil$;
\item $U_j \cap U_\sigma=\nil$ whenever $\Fix(\varphi_j) \cap \Fix(\sigma)=\nil$;
\item $U_j \cap U_\tau=\nil$ whenever $\Fix(\varphi_j) \cap \Fix(\tau)=\nil$;
\item $U_\sigma \cap U_\tau=\nil$ (since $\Fix(\sigma) \cap \Fix(\tau)=\nil$).
\end{itemize}

By the property of pseudo-Anosov elements, there exists a large enough integer $M$ such that $\sigma^{\pm m}(\PML(\surface)-U^\mp_\sigma) \subset U^\pm_\sigma$ and $\tau^{\pm m}(\PML(\surface)-U^\mp_\tau) \subset U^\pm_\tau$ for all $m$ with $m>M$. By Lemma~\ref{constructing pseudo-Anosov for conn}, $\psi=\sigma^p \tau^q$ is pseudo-Anosov for all sufficiently large $p$ and $q$. Hence, taking such $p$ and $q$ greater than $M$, we obtain a pseudo-Anosov element $\psi=\sigma^p\tau^q$ such that $\psi(\PML(\surface)-U^-_\tau) \in U^+_\sigma \subset U_\sigma$ and $\psi^{-1}(\PML(\surface)-U^+_\sigma) \in U^-_\tau \subset U_\tau$. Furthermore, by the disjointness of $U_\sigma$ and $U_\tau$, we see that $\psi^\ell(\PML(\surface)-U^-_\tau) \in U^+_\sigma \subset U_\sigma$ and $\psi^{-\ell}(\PML(\surface)-U^+_\sigma) \in U^-_\tau \subset U_\tau$ for all positive integer $\ell$.

Choose $\scrL \in \PML(\surface)-(U_\sigma \cup U_\tau)$ and consider a sequence $\scrL^\ell:=\psi^\ell(\scrL)$, $\ell \in \Z$. Note that $\scrL^\ell \in U^+_\sigma$ and $\scrL^{-\ell} \in U^-_\tau$ for all $\ell>0$. Since $\psi$ is pseudo-Anosov, we must have a convergence $\scrL^\ell \rightarrow \scrL^+_\psi$ and $\scrL^{-\ell} \rightarrow \scrL^-_\psi$ as $\ell \rightarrow \infty$. Thus, we see that $\scrL^+_\psi \in \overline{U^+_\sigma} \subset \overline{U_\sigma}$ and $\scrL^-_\psi \in \overline{U^-_\tau} \subset \overline{U_\tau}$.

We claim that this $\Fix(\psi) \cap \Fix(\varphi_j)=\nil$ for each $j$. We first note that the assumption $\Fix(\sigma) \cap \Fix(\tau)=\nil$ and Lemma~\ref{fixed-point sets of pseudo-Anosov} imply that we have either $\Fix(\varphi_j) \cap \Fix(\sigma) =\nil$ or $\Fix(\varphi_j) \cap \Fix(\tau) = \nil$ for each $j$. By the choice of our neighborhoods $U_j$, $U_\sigma$, $U_\tau$, we thus have $U_j \cap \overline{U_\sigma}=\nil$ or $U_j \cap \overline{U_\tau}=\nil$ for each $i$. It follows that $\scrL^+_\psi \not \in \Fix(\varphi_j)$ or $\scrL^-_\psi \not \in \Fix(\varphi_j)$. Thus, by Lemma~\ref{fixed-point sets of pseudo-Anosov}, we conclude that $\Fix(\psi) \cap \Fix(\varphi_j)=\nil$ for each $j$.
\end{proof}

%%%%%%%%%%%%%%%%%%%%%%%%%%%%%%%%%%%%%%%
\subsection{Adequately Reduced Subgroups for Disconnected Surfaces}
\label{Adeq-Red: Disconn}
%%%%%%%%%%%%%%%%%%%%%%%%%%%%%%%%%%%%%%%

We now allow a compact orientable surface $\surface=\bigsqcup_{i=1}^{c(\surface)} \surface^i$ to be \emph{disconnected} surface, where $c(\surface)$ denotes the number of component of $\surface$ and $\surface^i$ denotes each component as before. Note that it is possible to have an infinite irreducible subgroup $\Gamma < \Mod(\surface)$ such that it restricts to some component $\surface^i$ as a finite irreducible subgroup of $\Mod(\surface^i)$; there is no hope in finding a pseudo-Anosov element in such $\Gamma$.

It turns out that this is essentially the only obstacle to finding a pseudo-Anosov element in $\Gamma$. Ivanov showed that, if $\Gamma_{(m)} \lhd \Gamma$ is also irreducible, i.e. the restriction of $\Gamma_{(m)}$ to each component is irreducible, then $\Gamma_{(m)} \lhd \Gamma$ contains a pseudo-Anosov element \cite[\S6.3]{Ivanov:MCGBook}. Furthermore, when the restriction of $\Gamma_{(m)}$ to \emph{every} component of $\surface$ is larger than an infinite-cyclic group, he showed that $\Gamma_{(m)} \lhd \Gamma$ contains two pseudo-Anosov elements $\sigma$ and $\tau$ such that $\Fix(\sigma) \cap \Fix(\tau)=\nil$ in $\PML^\sharp(\surface)$ \cite[\S6.4]{Ivanov:MCGBook}.

We need the analogue of Theorem~\ref{trichotomy: adeq-red for conn} for disconnected surfaces; this will be given below as Theorem~\ref{trichotomy: adeq-red for disconn} after a couple of lemmas. Generally, an adequately reduced group $\Gamma$ is a hybrid of three cases that appeared in Theorem~\ref{trichotomy: adeq-red for conn}.

\begin{lem}[{c.f. \cite[\S9.10]{Ivanov:MCGBook}}]
\label{partition: adeq-red}
Fix an integer $m \geq 3$, and let $\Gamma < \Mod(\surface)$ be an adequately reduced subgroup. Consider a partition $\surface=\surface^{[0]} \sqcup \surface^{[1]} \sqcup \surface^{[2]}$ defined by the following:
\begin{itemize}
\item[{[0]}] the subsurface $\surface^{[0]}$ is the union of all components $\surface^i$ such that the restriction of $\Gamma_{(m)}$ to $\surface^i$ is trivial;
\item[{[1]}] the subsurface $\surface^{[1]}$ is the union of all components $\surface^i$ such that the restriction of $\Gamma_{(m)}$ to $\surface^i$ is infinite-cyclic;
\item[{[2]}] the subsurface $\surface^{[2]}$ is the union of all components $\surface^i$ such that the restriction of $\Gamma_{(m)}$ to $\surface^i$ is nontrivial and non-cyclic.
\end{itemize}
Then, every element $\sigma \in \Gamma$ preserves this partition, i.e. $\sigma(\surface^{[\ell]})=\surface^{[\ell]}$ for $\ell=0,1,2$; hence, the restriction of $\Gamma$ to $\surface^{[\ell]}$ is well-defined for $\ell=0,1,2$.
\end{lem}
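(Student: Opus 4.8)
The plan is to deduce this directly from Lemma~\ref{partition}, by observing that the three classes defining the partition are isomorphism invariants of the restricted groups. First I would recall from \S\ref{Mod-m} that the restriction of $\Gamma_{(m)}$ to each component $\surface^i$ is torsion-free. This is exactly what makes the partition well-defined and exhaustive: a torsion-free group that is cyclic must be either trivial or infinite-cyclic, so every restriction $\Gamma_{(m)}|_{\surface^i}$ falls into exactly one of the three classes $[0]$ trivial, $[1]$ infinite-cyclic, and $[2]$ nontrivial and non-cyclic. (Without torsion-freeness, a nontrivial \emph{finite} cyclic restriction would escape all three classes, so this hypothesis is essential.)

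Next, the key observation is that each of these three classes is invariant under group isomorphism: the trivial group is isomorphic only to the trivial group, $\Z$ only to infinite-cyclic groups, and a nontrivial non-cyclic group only to nontrivial non-cyclic groups. Now fix $\sigma \in \Gamma$ and a component $\surface^i$. Lemma~\ref{partition} supplies an isomorphism $\Gamma_{(m)}|_{\surface^i} \isom \Gamma_{(m)}|_{\sigma(\surface^i)}$. By the isomorphism-invariance just noted, $\surface^i$ and $\sigma(\surface^i)$ must lie in the same class; that is, if $\surface^i$ is a component of $\surface^{[\ell]}$, then so is $\sigma(\surface^i)$. Hence $\sigma(\surface^{[\ell]}) \subseteq \surface^{[\ell]}$ for each $\ell = 0,1,2$.

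Finally, since $\sigma$ is induced by a homeomorphism of $\surface$, it permutes the finite set of connected components of $\surface$; in particular its action on components is injective. An injective self-map of the finite set of components of $\surface^{[\ell]}$ into itself is necessarily a bijection, so the inclusion $\sigma(\surface^{[\ell]}) \subseteq \surface^{[\ell]}$ upgrades to the equality $\sigma(\surface^{[\ell]}) = \surface^{[\ell]}$. Consequently, the restriction of $\Gamma$ to each $\surface^{[\ell]}$ is well-defined, as claimed.

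I do not expect any serious obstacle here, as this is an almost immediate corollary of Lemma~\ref{partition}. The only point requiring care — and the one I would emphasize in writing it up — is that the three-fold partition is genuinely exhaustive, which hinges on the torsion-freeness of the restrictions of $\Gamma_{(m)}$; this is precisely why the partition is formulated in terms of $\Gamma_{(m)}$ rather than $\Gamma$ itself.
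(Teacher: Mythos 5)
Your proposal is correct and follows exactly the route the paper takes: the paper's entire proof reads ``This follows immediately from Lemma~\ref{partition},'' and your write-up simply makes explicit the isomorphism-invariance of the three classes, the torsion-freeness ensuring exhaustiveness, and the finiteness of the component set upgrading the inclusion to an equality. No gaps.
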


\begin{proof}
This follows immediately from Lemma~\ref{partition}.
\end{proof}

\begin{lem}
\label{two subgroups}
Fix an integer $m \geq 3$. For each $\ell$, let $\Gamma^{[\ell]}$ and $(\Gamma_{(m)})^{[\ell]}$ be the restrictions of $\Gamma$ and $\Gamma_{(m)}$ to $\surface^{[\ell]}$ respectively, and set $(\Gamma^{[\ell]})_{(m)}=\Gamma^{[\ell]} \cap \Mod_{(m)}(\surface^{[\ell]})$. Then, $(\Gamma_{(m)})^{[\ell]}$ is a finite-index normal subgroup of $(\Gamma^{[\ell]})_{(m)}$.
\end{lem}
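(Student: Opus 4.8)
The plan is to exhibit $(\Gamma_{(m)})^{[\ell]}$ and $(\Gamma^{[\ell]})_{(m)}$ as nested subgroups of the common ambient group $\Gamma^{[\ell]}$, and then to read off normality and finite index from this containment. By Lemma~\ref{partition: adeq-red}, every $\sigma \in \Gamma$ satisfies $\sigma(\surface^{[\ell]})=\surface^{[\ell]}$, so restriction to this invariant subsurface defines a homomorphism $r_\ell \colon \Gamma \to \Mod(\surface^{[\ell]})$, $\sigma \mapsto \sigma|_{\surface^{[\ell]}}$. By the definitions in the statement of Lemma~\ref{two subgroups} we have $r_\ell(\Gamma)=\Gamma^{[\ell]}$ and $r_\ell(\Gamma_{(m)})=(\Gamma_{(m)})^{[\ell]}$, so $r_\ell$ is a surjection $\Gamma \twoheadrightarrow \Gamma^{[\ell]}$ carrying the normal subgroup $\Gamma_{(m)} \lhd \Gamma$ onto $(\Gamma_{(m)})^{[\ell]}$.

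Normality is then immediate: the image of a normal subgroup under a surjective homomorphism is normal, so $(\Gamma_{(m)})^{[\ell]} \lhd \Gamma^{[\ell]}$, and hence $(\Gamma_{(m)})^{[\ell]}$ is normal in any intermediate subgroup that contains it, in particular in $(\Gamma^{[\ell]})_{(m)}$ once the containment below is established.

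The key step is the containment $(\Gamma_{(m)})^{[\ell]} \subseteq (\Gamma^{[\ell]})_{(m)}$, i.e. checking that the level-$m$ congruence condition is compatible with restriction to $\surface^{[\ell]}$. Since each $\surface^{[k]}$ is $\Gamma$-invariant, the $\Gamma$-action respects the splitting $H_1(\surface;\Z/m\Z)=\bigoplus_{k=0}^{2} H_1(\surface^{[k]};\Z/m\Z)$ into blocks, and by functoriality the block of $\sigma$ on $H_1(\surface^{[\ell]};\Z/m\Z)$ is precisely the action of $\sigma|_{\surface^{[\ell]}}$. For $\sigma \in \Gamma_{(m)}$ the full action on $H_1(\surface;\Z/m\Z)$ is trivial, so each block is trivial; thus $\sigma|_{\surface^{[\ell]}} \in \Mod_{(m)}(\surface^{[\ell]})$, and therefore $(\Gamma_{(m)})^{[\ell]} \subseteq \Gamma^{[\ell]} \cap \Mod_{(m)}(\surface^{[\ell]}) = (\Gamma^{[\ell]})_{(m)}$.

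Finally, for finite index I would argue that $(\Gamma_{(m)})^{[\ell]}$ is already finite-index in the larger group $\Gamma^{[\ell]}$: a surjection induces a surjection of coset spaces, so $[\Gamma^{[\ell]} : (\Gamma_{(m)})^{[\ell]}] \leq [\Gamma : \Gamma_{(m)}] < \infty$. Since $(\Gamma_{(m)})^{[\ell]} \subseteq (\Gamma^{[\ell]})_{(m)} \subseteq \Gamma^{[\ell]}$, the intermediate index satisfies $[(\Gamma^{[\ell]})_{(m)} : (\Gamma_{(m)})^{[\ell]}] \leq [\Gamma^{[\ell]} : (\Gamma_{(m)})^{[\ell]}] < \infty$, which finishes the proof. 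The whole argument is essentially bookkeeping once the restriction homomorphism is in hand; the only point requiring genuine care is the third step, where one must confirm that restricting a mapping class to the invariant subsurface $\surface^{[\ell]}$ corresponds exactly to restricting its homology action to the direct summand $H_1(\surface^{[\ell]};\Z/m\Z)$, so that the congruence condition passes correctly between $\surface$ and $\surface^{[\ell]}$. Note that equality need not hold, since a lift of an element of $(\Gamma^{[\ell]})_{(m)}$ may act nontrivially on $H_1(\surface^{[k]};\Z/m\Z)$ for $k \neq \ell$, which is precisely why the statement asserts finite index rather than coincidence.
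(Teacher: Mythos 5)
Your argument is correct and matches the paper's proof in all essentials: the containment $(\Gamma_{(m)})^{[\ell]} \subseteq (\Gamma^{[\ell]})_{(m)}$ via triviality of the homology action on the summand $H_1(\surface^{[\ell]};\Z/m\Z)$, and normality plus finite index inherited from $\Gamma_{(m)} \lhd \Gamma$ through the restriction surjection, then passed to the intermediate subgroup. Your closing remark that equality can fail also mirrors the remark the paper places immediately after the lemma.
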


\begin{proof}
By definition, an element of $(\Gamma^{[\ell]})_{(m)}$ is a restriction of an element of $\Gamma$ that acts trivially on $H_1(\surface^{[\ell]}; \Z/m\Z)<H_1(\surface; \Z/m\Z)$. An element of $(\Gamma_{(m)})^{[\ell]}$ clearly satisfies this property since it is a restriction of an element of $\Gamma_{(m)}$ that acts trivially on the entire $H_1(\surface; \Z/m\Z)$. Hence, $(\Gamma_{(m)})^{[\ell]}<(\Gamma^{[\ell]})_{(m)}<\Gamma^{[\ell]}$.

Since $\Gamma_{(m)}$ is a finite-index normal subgroup of $\Gamma$, we see that $(\Gamma_{(m)})^{[\ell]}$ must be a finite-index normal subgroup of $\Gamma^{[\ell]}$. Hence, we conclude that $(\Gamma_{(m)})^{[\ell]}$ must also be a finite-index normal subgroup of the intermediate subgroup $(\Gamma^{[\ell]})_{(m)}$.
\end{proof}

\begin{rem}
In general, $(\Gamma_{(m)})^{[\ell]}$ is contained in $(\Gamma^{[\ell]})_{(m)}$ as a \emph{proper} subgroup of $(\Gamma^{[\ell]})_{(m)}$. We also note that both $(\Gamma_{(m)})^{[\ell]}$ and $(\Gamma^{[\ell]})_{(m)}$ are torsion-free.
\end{rem}

%We give the generalization of Theorem~\ref{trichotomy: adeq-red for conn} in terms of $(\Gamma^{[\ell]})_{(m)}$.

\begin{thm}
\label{trichotomy: adeq-red for disconn}
Fix an integer $m \geq 3$, and let $\Gamma < \Mod(\surface)$ be an adequately reduced subgroup. Suppose $\surface=\surface^{[0]} \sqcup \surface^{[1]} \sqcup \surface^{[2]}$ is the partition given in Lemma~\ref{partition: adeq-red}, and set $\Gamma^{[\ell]}$ and $(\Gamma^{[\ell]})_{(m)}$ as in Lemma~\ref{two subgroups}. Then, $\Gamma$ satisfies all of the following: 
\begin{itemize}
\item[(0)] $(\Gamma^{[0]})_{(m)} \lhd \Gamma^{[0]}$ is trivial, and $\Gamma^{[0]}$ is finite;
\item[(1)] $(\Gamma^{[1]})_{(m)} \lhd \Gamma^{[1]}$ is free-abelian, and $\Gamma^{[1]}$ is virtually free-abelian;
\item[(2)] for any (possibly empty) finite collection $\varphi_1, \cdots, \varphi_n \in (\Gamma^{[2]})_{(m)} \lhd \Gamma^{[2]}$ of pseudo-Anosov elements, there exists another pseudo-Anosov element $\psi \in (\Gamma^{[2]})_{(m)} \lhd \Gamma^{[2]}$ such that $\Fix(\varphi_i) \cap \Fix(\psi)=\nil$ in $\PML^\sharp(\surface^{[2]})$ for all $i$.
\end{itemize}
\end{thm}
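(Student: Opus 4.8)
The plan is to treat the three conclusions separately, using Lemma~\ref{two subgroups} as the bridge: on each subsurface $\surface^{[\ell]}$ the restriction $(\Gamma_{(m)})^{[\ell]}$ has understood behavior on every component---trivial, infinite-cyclic, or infinite irreducible and non-cyclic, by Lemma~\ref{adeq-red and Gamma_m} together with the definition of the partition in Lemma~\ref{partition: adeq-red}---whereas Lemma~\ref{two subgroups} places $(\Gamma_{(m)})^{[\ell]}$ inside $(\Gamma^{[\ell]})_{(m)}$ as a finite-index normal subgroup. Throughout I will use that an element of $\Mod_{(m)}(\surface^{[\ell]})$ preserves each component and is determined by its tuple of restrictions, so that $(\Gamma^{[\ell]})_{(m)}$ embeds into the product of its component restrictions, and that $(\Gamma^{[\ell]})_{(m)}$ is torsion-free.

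For conclusion (0), the definition of $\surface^{[0]}$ makes the restriction of $\Gamma_{(m)}$ to each of its components trivial, so the product embedding forces $(\Gamma_{(m)})^{[0]}$ to be trivial; being finite-index in $(\Gamma^{[0]})_{(m)}$ by Lemma~\ref{two subgroups}, the group $(\Gamma^{[0]})_{(m)}$ is finite, hence trivial as it is torsion-free, and so $\Gamma^{[0]}$ is finite. For conclusion (1), I would first show that for each component $\surface^i \subset \surface^{[1]}$ the restriction $(\Gamma^{[1]})_{(m)}|_{\surface^i}$ is infinite-cyclic: the image of the finite-index subgroup $(\Gamma_{(m)})^{[1]}$ under restriction to $\surface^i$ is the infinite-cyclic group $\Gamma_{(m)}|_{\surface^i}$, which is therefore finite-index in $(\Gamma^{[1]})_{(m)}|_{\surface^i}$; since the latter is torsion-free (an element of $\Mod_{(m)}$ restricts to each component as a trivial or aperiodic mapping class), and a torsion-free virtually-infinite-cyclic group is infinite-cyclic, the claim follows. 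The product embedding then realizes $(\Gamma^{[1]})_{(m)}$ as a subgroup of a finite product of infinite-cyclic groups, hence as a free-abelian group, and $\Gamma^{[1]}$ is virtually free-abelian.

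The substance of the theorem is (2), where I would adapt the proof of Theorem~\ref{trichotomy: adeq-red for conn} component-by-component. On each component $\surface^i \subset \surface^{[2]}$ the restriction of $\Gamma_{(m)}$ is infinite, irreducible, and non-cyclic (Lemma~\ref{adeq-red and Gamma_m}), so Ivanov's result \cite[\S6.4]{Ivanov:MCGBook} supplies pseudo-Anosov elements $\sigma, \tau \in (\Gamma_{(m)})^{[2]} \subset (\Gamma^{[2]})_{(m)}$ with $\Fix(\sigma) \cap \Fix(\tau) = \nil$ in $\PML^\sharp(\surface^{[2]})$, i.e.\ with disjoint fixed-point sets on every component. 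Given the collection $\varphi_1, \dots, \varphi_n$, I would set $\psi = \sigma^p \tau^q$ and argue as in Theorem~\ref{trichotomy: adeq-red for conn}, but separately on each $\surface^i$, using that $\PML^\sharp(\surface^{[2]}) = \bigsqcup_i \PML(\surface^i)$ and that the fixed-point set of a pseudo-Anosov element is the union over $i$ of its fixed-point pairs $\scrL^\pm_{\psi|_{\surface^i}}$. Applying Lemma~\ref{constructing pseudo-Anosov for conn} on each of the finitely many components lets me choose a single pair $p, q$ making $\psi$ pseudo-Anosov on every component; choosing attracting and repelling neighborhoods on each component and invoking the north-south dynamics recorded in \S\ref{Pseudo-Anosov: Disconn} locates $\scrL^+_{\psi|_{\surface^i}} \in \overline{U^+_{\sigma|_{\surface^i}}}$ and $\scrL^-_{\psi|_{\surface^i}} \in \overline{U^-_{\tau|_{\surface^i}}}$. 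Lemma~\ref{fixed-point sets of pseudo-Anosov}, applied on the \emph{connected} component $\surface^i$, then forces $\Fix(\psi|_{\surface^i}) \cap \Fix(\varphi_k|_{\surface^i}) = \nil$ for every $i$ and $k$, whence $\Fix(\psi) \cap \Fix(\varphi_k) = \nil$ in $\PML^\sharp(\surface^{[2]})$.

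The main obstacle is that the clean dichotomy of Lemma~\ref{fixed-point sets of pseudo-Anosov}---either equal or disjoint fixed-point sets---\emph{fails} on $\PML^\sharp(\surface^{[2]})$, because two pseudo-Anosov elements may share fixed points on some components while differing on others. This is precisely why I cannot invoke Theorem~\ref{trichotomy: adeq-red for conn} as a black box over the disconnected surface and must instead disassemble into components, where Lemma~\ref{fixed-point sets of pseudo-Anosov} does apply, and then reassemble. The one genuinely global constraint---that $\psi$ be a \emph{single} element of $(\Gamma^{[2]})_{(m)}$ rather than an independent choice on each component---is dispatched by the finiteness of the number of components, which permits a uniform choice of the exponents $p$ and $q$.
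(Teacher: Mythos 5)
Your proposal is correct and follows essentially the same route as the paper: conclusions (0) and (1) are dispatched via the finite-index containment of Lemma~\ref{two subgroups} together with torsion-freeness, and conclusion (2) is proved by taking Ivanov's pair $\sigma,\tau$ with disjoint fixed-point sets in $\PML^\sharp$, forming $\psi=\sigma^p\tau^q$ with exponents chosen uniformly over the finitely many components, and running the argument of Theorem~\ref{trichotomy: adeq-red for conn} component by component, exactly as the paper does. Your observation that Lemma~\ref{fixed-point sets of pseudo-Anosov} only applies on connected components, which is what forces the disassembly--reassembly structure, is precisely the point of the paper's proof.
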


\begin{proof}
It follows from the choice of $\surface^{[0]}$ in Lemma~\ref{partition: adeq-red} that $(\Gamma_{(m)})^{[0]}$ is trivial; hence, $(\Gamma^{[0]})_{(m)}$ must also be trivial, and $\Gamma^{[0]}$ must be finite. To see that $(\Gamma^{[1]})_{(m)}$ is free-abelian, we consider its restriction to each component of $\surface^{[1]}$. Since the restriction of $(\Gamma_{(m)})^{[1]}<(\Gamma^{[1]})_{(m)}$ to each component is infinite-cyclic by definition, the restriction of $(\Gamma^{[1]})_{(m)}$ to each component must be virtually infinite-cyclic. Theorem~\ref{trichotomy: adeq-red for conn} then implies that this restriction of $(\Gamma^{[1]})_{(m)}$ to each component must be infinite-cyclic. Hence, $(\Gamma^{[1]})_{(m)}$ must be free-abelian, and $\Gamma^{[1]}$ is virtually free-abelian.

Now, we consider $\Gamma^{[2]}$ and $(\Gamma^{[2]})_{(m)}$. Note that it suffices to prove the statement (2) under the assumption $\surface=\surface^{[2]}$, which allow us to reduce the cluttering of the notations. With this assumption, we have $\Gamma=\Gamma^{[2]}$ and $\Gamma_{(m)}=(\Gamma^{[2]})_{(m)}$. The restriction of $\Gamma_{(m)}$ to each component of $\surface$ is nontrivial and non-cyclic, and $\Gamma_{(m)}$ is infinite and irreducible by Lemma~\ref{adeq-red and Gamma_m}. Hence, by Ivanov's result \cite[\S6.4]{Ivanov:MCGBook}, we know that there exists two pseudo-Anosov elements $\sigma$ and $\tau$ in $\Gamma_{(m)}$ such that $\Fix(\sigma)\cap\Fix(\tau)=\nil$ in $\PML^\sharp(\surface)$. We need to show that the following slightly stronger statement holds: for any finite collection $\varphi_1, \cdots, \varphi_n \in \Gamma_{(m)} \lhd \Gamma$ of pseudo-Anosov elements, there exists another pseudo-Anosov element $\psi \in \Gamma_{(m)} \lhd \Gamma$ such that $\Fix(\varphi_j) \cap \Fix(\psi)=\nil$ in $\PML^\sharp(\surface)$ for all $j$.

Recall that the group $\Gamma_{(m)}$ can be restricted to each component $\surface^i$ of $\surface$; in particular, the restrictions of pseudo-Anosov elements $\sigma, \tau, \varphi_j \in\Gamma_{(m)}$ to each component $\surface^i$ are again pseudo-Anosov.
The action of $\Gamma_{(m)}$ on $\PML^\sharp(\surface)$ descends to the action of the restriction of $\Gamma_{(m)}$ to $\surface^i$ on the component $\PML(\surface^i)$. The proof of Theorem~\ref{trichotomy: adeq-red for conn} applies to each component, and we will combine them to construct the desired $\psi \in \Gamma_{(m)}$.

Suppose we are given a finite collection $\varphi_1, \cdots, \varphi_n \in \Gamma_{(m)} \lhd \Gamma$ of pseudo-Anosov elements. For each $\varphi_j$ and each component $\surface^i$, we let $U^{+,i}_j, U^{-,i}_j \subset \PML(\surface^i)$ be attracting and repelling neighborhoods of $\varphi_j|_{\surface^i}$ such that $U^{+,i}_j \cap U^{-,i}_j=\nil$; we set $U^i_j=U^{+,i}_j \cup U^{-,i}_j$. Similarly, we let $U^{+,i}_\sigma, U^{-,i}_\sigma, U^{+,i}_\tau, U^{-,i}_\tau \subset \PML(\surface^i)$ be attracting and repelling neighborhoods of $\sigma|_{\surface^i}$, $\tau|_{\surface^i}$ respectively such that $U^{+,i}_\sigma \cap U^{-,i}_\sigma=\nil$ and $U^{+,i}_\tau \cap U^{-,i}_\tau=\nil$; again, we set $U^i_\sigma=U^{+,i}_\sigma \cup U^{-,i}_\sigma$ and $U^i_\tau=U^{+,i}_\tau \cup U^{-,i}_\tau$. We can take these neighborhoods to be sufficiently small so that the following holds:
\begin{itemize}
\item $U^i_j \cap U^i_k=\nil$ whenever $\Fix(\varphi_j|_{\surface^i}) \cap \Fix(\varphi_k|_{\surface^i})=\nil$;
\item $U^i_j \cap U^i_\sigma=\nil$ whenever $\Fix(\varphi_j|_{\surface^i}) \cap \Fix(\sigma|_{\surface^i})=\nil$;
\item $U^i_j \cap U^i_\tau=\nil$ whenever $\Fix(\varphi_j|_{\surface^i}) \cap \Fix(\tau|_{\surface^i})=\nil$;
\item $U^i_\sigma \cap U^i_\tau=\nil$ (since $\Fix(\sigma|_{\surface^i}) \cap \Fix(\tau|_{\surface^i})=\nil$ by assumption).
\end{itemize}

By the property of pseudo-Anosov elements, there exists a large enough integer $M$ such that $(\sigma|_{\surface^i})^{\pm m}(\PML(\surface^i)-U^{\mp,i}_\sigma) \subset U^{\pm,i}_\sigma$ and $(\tau|_{\surface^i})^{\pm m}(\PML(\surface^i)-U^{\mp,i}_\tau) \subset U^{\pm,i}_\tau$ for all $i$ and for all $m>M$. Also, by Lemma~\ref{constructing pseudo-Anosov for conn}, there exists large enough integers $P$ and $Q$ such that $\sigma^p\tau^q|_{\surface^i}=(\sigma|_{\surface^i})^p (\tau|_{\surface^i})^q$ is pseudo-Anosov for all $i$ and for all $p>P$ and $q>Q$; in other words, $\psi=\sigma^p \tau^q$ is pseudo-Anosov on $\surface$ for all $p>P$ and $q>Q$. Taking $p$ and $q$ greater than $M$, we obtain a pseudo-Anosov element $\psi=\sigma^p\tau^q$ such that $\psi|_{\surface^i}(\PML(\surface^i)-U^{-,i}_\tau) \in U^{+,i}_\sigma \subset U^i_\sigma$ and $\psi^{-1}|_{\surface^i}(\PML(\surface^i)-U^{+,i}_\sigma) \in U^{-,i}_\tau \subset U^i_\tau$. Furthermore, by the disjointness of $U^i_\sigma$ and $U^i_\tau$, we see that $\psi^\ell|_{\surface^i}(\PML(\surface^i)-U^{-,i}_\tau) \in U^{+,i}_\sigma \subset U^i_\sigma$ and $\psi^{-\ell}|_{\surface^i}(\PML(\surface^i)-U^{+,i}_\sigma) \in U^{-,i}_\tau \subset U^i_\tau$ for all $i$ and for all positive integer $\ell$.

Choose $\scrL \in \PML(\surface^i)-(U^i_\sigma \cup U^i_\tau)$ and consider a sequence $\scrL^\ell:=\psi^\ell(\scrL)$, $\ell \in \Z$. By the same arguments as in the proof of Theorem~\ref{trichotomy: adeq-red for conn}, we see that $\scrL^\ell \rightarrow \scrL^{+,i}_\psi \in \overline{U^{+,i}_\sigma} \subset \overline{U^i_\sigma}$ and $\scrL^{-\ell} \rightarrow \scrL^-_\psi \in \overline{U^{-,i}_\tau} \subset \overline{U^i_\tau}$ as $\ell \rightarrow \infty$, and we deduce that $\Fix(\psi|_{\surface^i}) \cap \Fix(\varphi_j|_{\surface^i})=\nil$. Since this is true for each component $\surface^i$, we conclude that $\Fix(\psi) \cap \Fix(\varphi_j)=\nil$ in $\PML^\sharp(\surface)$.
\end{proof}

%%%%%%%%%%%%%%%%%%%%%%%%%%%%%%%%%%%%%%%
%
\section{Girth Alternative}
\label{girth alternative}
%
%%%%%%%%%%%%%%%%%%%%%%%%%%%%%%%%%%%%%%%

We now consider the girth of subgroups $\Gamma$ of a mapping class group $\Mod(\surface)$, where $\surface$ is a compact surface; we do not a priori assume that $\surface$ is connected. Our main result is that the dichotomy between the subgroups with infinite girth and the ones with finite girth indeed coincides with the structural dichotomy of the Tits alternative shown in \cite{Ivanov:Tits-Margulis-Soifer} and \cite{McCarthy:Tits}. In other words, we have the following \emph{girth alternative}:

\begin{thm4}
Let $\surface$ be a compact orientable surface, and let $\Gamma$ be a finitely generated subgroup of $\Mod(\surface)$. Then, $\Gamma$ is either a non-cyclic group with infinite girth or a virtually free-abelian group; moreover, these alternatives are mutually exclusive.
\end{thm4}

The girth alternative above reduces to the case where the interior of $\surface$ admits a complete hyperbolic metric.

\begin{thm} \label{girth-alt: mcg hyp}
Let $\surface$ be a compact orientable surface whose interior admits a complete hyperbolic metric, and let $\Gamma$ be a finitely generated subgroup of $\Mod(\surface)$. Then, $\Gamma$ is either a non-cyclic group with infinite girth or a virtually free-abelian group; moreover, these alternatives are mutually exclusive.
\end{thm}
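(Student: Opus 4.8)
The plan is to prove that the two alternatives are exhaustive and mutually exclusive by reducing everything to the adequately reduced case and there to a single ping-pong construction governed by the Infinite Girth Criterion (Proposition~\ref{igc}). Mutual exclusivity is immediate: a non-cyclic virtually free-abelian group is virtually solvable and not infinite cyclic, so it has finite girth by Corollary~\ref{virtually solvable} and cannot simultaneously have infinite girth. For exhaustiveness I would first dispose of the reducible case. Let $\calC$ be the canonical reduction system for $\Gamma$. If $\calC \neq \nil$, the reduction homomorphism $\red_{\!\calC}\colon \Gamma \to \Mod(\surface_\calC)$ has finitely generated free-abelian kernel and adequately reduced image $Q := \red_{\!\calC}(\Gamma)$. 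Granting the theorem for adequately reduced groups (the content of the next two paragraphs), there are two cases. If $Q$ is virtually free-abelian, then $\Gamma$ is (free-abelian)-by-(virtually solvable), hence virtually solvable, so the Tits alternative (Theorem~\ref{tits-alt: mcg}) forces $\Gamma$ to be virtually abelian, and being finitely generated it is virtually free-abelian. If instead $Q$ is non-cyclic with infinite girth, then $\Gamma$ surjects onto $Q$ and is therefore non-cyclic with infinite girth by Proposition~\ref{akhmedov criterion}. This is exactly the point at which we must exploit a surjection rather than pass to a finite-index subgroup, since infinite girth is not known to be inherited from finite-index subgroups.

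It remains to treat an adequately reduced $\Gamma$, and here I would invoke the partition $\surface = \surface^{[0]} \sqcup \surface^{[1]} \sqcup \surface^{[2]}$ of Lemma~\ref{partition: adeq-red}, which is preserved by $\Gamma$ and yields an injective restriction homomorphism $\Gamma \hookrightarrow \Gamma^{[0]} \times \Gamma^{[1]} \times \Gamma^{[2]}$. By Theorem~\ref{trichotomy: adeq-red for disconn}, $\Gamma^{[0]}$ is finite and $\Gamma^{[1]}$ is virtually free-abelian. If $\surface^{[2]} = \nil$, then $\Gamma$ embeds in the virtually free-abelian group $\Gamma^{[0]} \times \Gamma^{[1]}$ and is hence itself virtually free-abelian. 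If $\surface^{[2]} \neq \nil$, then $\Gamma$ surjects onto $\Gamma^{[2]}$ by restriction, so by Proposition~\ref{akhmedov criterion} it suffices to prove that $\Gamma^{[2]}$ is non-cyclic with infinite girth. For this I will apply the Infinite Girth Criterion to the action of $\Gamma^{[2]}$ on $X := \PML^\sharp(\surface^{[2]})$, using the abundance of pseudo-Anosov elements supplied by condition (2) of Theorem~\ref{trichotomy: adeq-red for disconn}.

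The ping-pong construction runs as follows. Fix a finite generating set $\calG = \{\gamma_1, \dots, \gamma_n\}$ of $\Gamma^{[2]}$. Using condition (2) with the empty collection, choose a pseudo-Anosov $\sigma \in (\Gamma^{[2]})_{(m)}$. Each conjugate $\gamma_i^\varepsilon \sigma \gamma_i^{-\varepsilon}$ (for $1 \le i \le n$, $\varepsilon = \pm 1$) is again pseudo-Anosov and lies in $(\Gamma^{[2]})_{(m)}$ by normality, with fixed-point set $\gamma_i^\varepsilon(\Fix(\sigma))$. Applying condition (2) to the finite collection consisting of $\sigma$ together with all these conjugates, I obtain a pseudo-Anosov $\tau \in (\Gamma^{[2]})_{(m)}$ whose fixed-point set is disjoint from $\Fix(\sigma)$ and from every $\gamma_i^\varepsilon(\Fix(\sigma))$; reading the indices with the opposite sign, this also makes $\Fix(\sigma)$ disjoint from every $\gamma_i^\varepsilon(\Fix(\tau))$. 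Choosing disjoint attracting and repelling neighborhoods $U_\sigma = U_\sigma^+ \cup U_\sigma^-$ and $U_\tau = U_\tau^+ \cup U_\tau^-$ small enough, these fixed-set disjointnesses upgrade to $U_\sigma \cap U_\tau = \nil$, $U_\sigma \cap \gamma_i^\varepsilon(U_\tau) = \nil$, and $U_\tau \cap \gamma_i^\varepsilon(U_\sigma) = \nil$ for all $i, \varepsilon$; since the components of $\surface^{[2]}$ carry positive-dimensional $\PML$, a point $x$ avoiding the finite union $(U_\sigma \cup U_\tau) \cup \bigcup_{i,\varepsilon} \gamma_i^\varepsilon(U_\sigma \cup U_\tau)$ exists. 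Finally, replacing $\sigma$ and $\tau$ by sufficiently high powers (which leaves their fixed sets, hence the chosen neighborhoods, unchanged) the north-south dynamics give $\sigma^k(X - U_\sigma^-) \subset U_\sigma^+$ for all $k \geq 1$ and $\sigma^k(X - U_\sigma^+) \subset U_\sigma^-$ for all $k \leq -1$, using the nesting $\sigma^N(U_\sigma^+) \subset U_\sigma^+$, and likewise for $\tau$. Combined with the disjointness, this yields precisely conditions (1)--(3) of Proposition~\ref{igc}, so $\Gamma^{[2]}$ is non-cyclic with infinite girth, completing the dichotomy.

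The main obstacle is the construction in the preceding paragraph, specifically that the Infinite Girth Criterion demands ping-pong sets which are disjoint not merely from one another but from all of their translates under the fixed generating set $\calG$. The usual Tits-alternative input, namely two pseudo-Anosov elements with disjoint fixed-point sets, is insufficient; what is genuinely needed is the stronger richness statement of condition (2)---that one can find a pseudo-Anosov element avoiding any prescribed finite family of fixed-point sets---applied to the generator-conjugates of a first pseudo-Anosov element. The remaining points, namely that the contraction $\sigma^k(X - U_\sigma^-) \subset U_\sigma^+$ persists for every nonzero $k$ (not just large $k$) after passing to a power, and that the disjointness survives shrinking the neighborhoods, are the routine but essential book-keeping steps.
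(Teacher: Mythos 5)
Your proposal is correct and follows essentially the same route as the paper: reduce along the canonical reduction system and push infinite girth forward through the reduction homomorphism via Proposition~\ref{akhmedov criterion}, partition the adequately reduced case as in Lemma~\ref{partition: adeq-red}, restrict to $\surface^{[2]}$, and run the Infinite Girth Criterion ping-pong with a pseudo-Anosov $\sigma$ and a second pseudo-Anosov $\tau$ supplied by condition (2) of Theorem~\ref{trichotomy: adeq-red for disconn} applied to $\sigma$ and its generator-conjugates. The only (harmless) deviations are that you derive the two ``virtually free-abelian'' implications directly --- via the Tits alternative for the reducible case and via the embedding $\Gamma \hookrightarrow \Gamma^{[0]} \times \Gamma^{[1]} \times \Gamma^{[2]}$ when $\surface^{[2]} = \nil$ --- where the paper instead cites Ivanov's Lemmas (\S 8.7 and \S 8.9 of \cite{Ivanov:MCGBook}, i.e.\ Lemmas~\ref{vAb: adeq-red} and~\ref{vAb: red}).
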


Let us first show that the general case, Theorem~\ref{girth-alt: mcg} in \S\ref{Introduction}, follows from the hyperbolic case, Theorem~\ref{girth-alt: mcg hyp} above.

\begin{proof}[Proof of Theorem~\ref{girth-alt: mcg}]
Suppose for now that $\surface$ consists of copies of tori and a (possibly disconnected) surface that admits a complete hyperbolic metric. Since the mapping class groups of tori and one-punctured tori are isomorphic, we may replace the copies of tori in $\surface$ with the same number of copies of once-punctured tori. In turn, we now realize $\Gamma$ as a subgroup of the mapping class group of a hyperbolic surface; Theorem~\ref{girth-alt: mcg} follows from Theorem~\ref{girth-alt: mcg hyp} as desired.

For the general case, let $\surface=\surface' \sqcup \surface''$, such that $\surface'$ is the union of tori and hyperbolic components, and that $\surface''$ is the union of spheres, disks, and annuli. If the restriction $\Gamma'$ of $\Gamma$ to $\surface'$ is a non-cyclic group with infinite girth, then so is $\Gamma$ by Proposition~\ref{akhmedov criterion}. So, assume that $\Gamma'$ is virtually abelian and let $A' < \Gamma'$ be a finite-index abelian subgroup.

Recall that the mapping class groups are trivial for the sphere, the disk, and the annulus; hence, the restriction of $\Gamma$ to $\surface''$ can only permute these components. Hence, the kernel $K$ of the restriction to $\surface''$ is a finite-index normal subgroup of $\Gamma$. Let $K'$ be the restriction of $K$ to $\surface'$; note that $K' \cap A'$ is a finite-index abelian subgroup of $K'$. Now, since elements of $K$ acts trivially on $\surface''$, we see that $K \cong K'$. Hence, $K$ contains a finite-index abelian subgroup isomorphic to $K' \cap A'$. As $K$ itself has finite index in $\Gamma$, we conclude that $\Gamma$ contains a finite-index abelian subgroup.
\end{proof}

Throughout the rest of this section, we will assume that the surface $\surface$ admits a complete hyperbolic metric. The proof for Theorem~\ref{girth-alt: mcg hyp} is the content of \S\ref{Girth: Adeq-Red} and \S\ref{Girth: Red}, and will be given in the form of Propositions~\ref{girth-alt: adeq-red for conn}, \ref{girth-alt: adeq-red for disconn}, and \ref{girth-alt: red}. The main idea of the proof is that a pair of generators of free subgroups in the Tits alternative can be carefully chosen so that Infinite Girth Criterion (Proposition~\ref{igc}) can be applied to these elements. Proposition~\ref{girth-alt: adeq-red for conn}, stated in a slightly different language, was independently proved by Yamagata \cite{Yamagata:Girth}.

%%%%%%%%%%%%%%%%%%%%%%%%%%%%%%%%%%%%%%%
% \newpage
\subsection{Girth of Adequately Reduced Subgroups}
\label{Girth: Adeq-Red}
%%%%%%%%%%%%%%%%%%%%%%%%%%%%%%%%%%%%%%%

We first consider an adequately reduced subgroup $\Gamma < \Mod(\surface)$, where $\surface$ is a \emph{connected} surface.

\begin{prop}[c.f. \cite{Yamagata:Girth}] \label{girth-alt: adeq-red for conn}
Let $\surface$ be a connected compact orientable surface with $\chi(\surface)<0$, and suppose that $\Gamma < \Mod(\surface)$ is an adequately reduced subgroup. Then, $\Gamma$ is either a non-cyclic group with infinite girth or a virtually infinite-cyclic group, or a finite group; moreover, these alternatives are mutually exclusive. In particular, if $\surface$ is not a pair of pants, $\Mod(\surface)$ has infinite girth.
\end{prop}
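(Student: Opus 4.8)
The plan is to run through the trichotomy of Theorem~\ref{trichotomy: adeq-red for conn} with a fixed $m \geq 3$. In alternative (0), $\Gamma$ is finite, and in alternative (1), $\Gamma$ is virtually infinite-cyclic; in both cases the conclusion holds on the nose. So the entire burden falls on alternative (2), where I must exhibit, for the given finite generating set $\calG = \{\gamma_1, \dots, \gamma_n\}$ of $\Gamma$, elements $\sigma, \tau$, subsets $U_\sigma, U_\tau \subset X := \PML(\surface)$, and a point $x$ satisfying the hypotheses of the Infinite Girth Criterion (Proposition~\ref{igc}) for the action of $\Gamma$ on $X$.

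The key construction exploits the strengthened form of alternative (2). First I fix any pseudo-Anosov $\tau \in \Gamma_{(m)}$. Since $\Gamma_{(m)} \lhd \Gamma$, each conjugate $\gamma_i^\varepsilon \tau \gamma_i^{-\varepsilon}$ (for $1 \le i \le n$, $\varepsilon = \pm 1$) again lies in $\Gamma_{(m)}$ and is pseudo-Anosov, with $\Fix(\gamma_i^\varepsilon \tau \gamma_i^{-\varepsilon}) = \gamma_i^\varepsilon(\Fix(\tau))$. Applying alternative (2) to the finite collection consisting of $\tau$ together with all these conjugates yields a pseudo-Anosov $\sigma \in \Gamma_{(m)}$ whose fixed-point set is disjoint from $\Fix(\tau)$ and from every $\gamma_i^\varepsilon(\Fix(\tau))$. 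The point I want to highlight is that the reverse disjointness comes for free: to get $\gamma_i^\varepsilon(\Fix(\sigma)) \cap \Fix(\tau) = \nil$ it suffices, after applying $\gamma_i^{-\varepsilon}$, to know $\Fix(\sigma) \cap \gamma_i^{-\varepsilon}(\Fix(\tau)) = \nil$, which is exactly one of the conditions already arranged (with $\varepsilon$ replaced by $-\varepsilon$). Here Lemma~\ref{fixed-point sets of pseudo-Anosov} is used tacitly, as disjointness of the relevant fixed-point sets is equivalent to their inequality.

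Next I choose neighborhoods. Writing $\Fix(\sigma) = \{\scrL_\sigma^+, \scrL_\sigma^-\}$ and similarly for $\tau$, I pick disjoint attracting/repelling neighborhoods $U_\sigma = U_\sigma^+ \cup U_\sigma^-$ and $U_\tau = U_\tau^+ \cup U_\tau^-$. Since $\PML(\surface)$ is compact Hausdorff and the $\gamma_i^\varepsilon$ are homeomorphisms, the disjointness of fixed-point sets established above lets me shrink $U_\sigma, U_\tau$ so that $U_\sigma \cap U_\tau = \nil$, $U_\sigma \cap \gamma_i^\varepsilon(U_\tau) = \nil$, and $U_\tau \cap \gamma_i^\varepsilon(U_\sigma) = \nil$ for all $i, \varepsilon$. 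By north--south dynamics there is an $N$ so large that $\sigma^{\pm k}(\PML(\surface) - U_\sigma^\mp) \subset U_\sigma^\pm$ and $\tau^{\pm k}(\PML(\surface) - U_\tau^\mp) \subset U_\tau^\pm$ for all $k \ge N$; I then replace $\sigma, \tau$ by $\sigma^N, \tau^N$, which changes neither their fixed-point sets nor the neighborhoods. After this replacement every nonzero power has exponent of absolute value $\ge N$, so conditions (2) and (3) of Proposition~\ref{igc} reduce precisely to the disjointness relations just arranged: each of $\{x\} \cup U_\tau \cup \bigcup_{\varepsilon, i}\gamma_i^\varepsilon(U_\tau)$ and $\{x\} \cup U_\sigma \cup \bigcup_{\varepsilon, i}\gamma_i^\varepsilon(U_\sigma)$ is disjoint from $U_\sigma$, resp.\ $U_\tau$. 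Finally, the finitely many open sets appearing in condition (1) form a neighborhood of a finite set of points in the positive-dimensional sphere $\PML(\surface)$ (here $\surface$ not being a pair of pants guarantees positive dimension), so after a further shrinking they do not cover $\PML(\surface)$, and I may pick $x$ in the complement. Thus Proposition~\ref{igc} applies and $\Gamma$ is non-cyclic with infinite girth. I expect this verification---in particular arranging a single $\sigma$ to avoid simultaneously all the $\gamma_i$-translates of $\Fix(\tau)$---to be the main obstacle.

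It remains to record mutual exclusivity and the final assertion. A finite group and an infinite one are distinct cases, and infinite girth forces infiniteness; moreover a non-cyclic virtually infinite-cyclic group is virtually solvable and not infinite cyclic, hence has finite girth by Corollary~\ref{virtually solvable}, so it cannot simultaneously have infinite girth. For the last sentence, $\Mod(\surface)$ is irreducible, hence adequately reduced, and for $\surface$ not a pair of pants it is infinite and not virtually infinite-cyclic; therefore it falls into alternative (2) and has infinite girth.
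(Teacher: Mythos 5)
Your proposal is correct and follows essentially the same route as the paper's proof: apply the strengthened alternative (2) of Theorem~\ref{trichotomy: adeq-red for conn} to one pseudo-Anosov element together with its conjugates by the generators $\gamma_i^{\pm 1}$, shrink the attracting/repelling neighborhoods to achieve the required disjointness, pass to high powers, and invoke the Infinite Girth Criterion. The only cosmetic difference is that the roles of $\sigma$ and $\tau$ are interchanged, and you make explicit a few points (the ``reverse'' disjointness, the existence of $x$, and the deduction of the final sentence) that the paper leaves implicit.
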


\begin{proof}
Choose an integer $m \geq 3$. Suppose $\Gamma$ is an adequately reduced subgroup of $\Mod(\surface)$, and let $\calG=\{ \gamma_1, \cdots , \gamma_n \}$ be a generating set of $\Gamma$. We assume that $\Gamma$ is infinite and not virtually infinite-cyclic, and aim to show that it has infinite girth. In this case, the statement (2) of Theorem~\ref{trichotomy: adeq-red for conn} must be satisfied.

First, we know that there is a pseudo-Anosov element $\sigma \in \Gamma_{(m)} \lhd \Gamma$. For each $1 \leq j \leq n$ and $\varepsilon=\pm 1$, the conjugate $\gamma_j^\varepsilon \sigma \gamma_j^{-\varepsilon} \in \Gamma_{(m)} \lhd \Gamma$ is again a pseudo-Anosov element with $\Fix(\gamma_j^\varepsilon \sigma \gamma_j^{-\varepsilon})=\gamma_j^\varepsilon(\Fix(\sigma))$. Hence, applying the statement (2) of Theorem~\ref{trichotomy: adeq-red for conn} to the collection $\{\sigma\} \cup \{
\gamma_j^\varepsilon \sigma \gamma_j^{-\varepsilon} \; | \; 1 \leq j \leq n, \varepsilon=\pm1 \}$, we see that there is another pseudo-Anosov element $\tau \in \Gamma_{(m)} \lhd \Gamma$ such that $\Fix(\tau) \cap \Fix(\sigma)=\nil$ and $\Fix(\tau) \cap \Fix(\gamma_j^\varepsilon \sigma \gamma_j^{-\varepsilon})=\nil$ for all $1 \leq j \leq n$ and $\varepsilon=\pm 1$.

Note that, if $U_\sigma$ is a neighborhood of $\Fix(\sigma)$, then $\gamma_j^\varepsilon(U_\sigma)$ is a neighborhood of $\Fix(\gamma_j^\varepsilon \sigma \gamma_j^{-\varepsilon})$ for each $1 \leq j \leq n$ and $\varepsilon=\pm1$. It then follows that there are small enough neighborhoods $U_\sigma \supset \Fix(\sigma)$ and $U_\tau \supset \Fix(\tau)$ such that
\[
U_\sigma \cap U_\tau=\nil \;\; \text{and} \;\; U_\tau \cap \gamma_j^\varepsilon(U_\sigma)=\nil
\]
for each $1 \leq j \leq n$ and $\varepsilon=\pm1$, or equivalently
\[
U_\sigma \cap U_\tau=\nil \;\; \text{and} \;\; \gamma_j^\varepsilon(U_\tau) \cap U_\sigma=\nil
\]
for each $1 \leq j \leq n$ and $\varepsilon=\mp1$. Now, since $\sigma$ and $\tau$ are pseudo-Anosov elements, we can take high enough powers $\tilde \sigma:=\sigma^N$ and $\tilde \tau:=\tau^N$ such that
\[
\tilde \sigma^k \big( \PML(\surface)-U_\sigma \big) \subset U_\sigma \;\; \text{and} \;\; \tilde \tau^k \big( \PML(\surface)-U_\tau \big) \subset U_\tau
\]
for all non-zero integer $k$. In particular, we have
\[
\displaystyle{ \tilde \sigma^k \bigg( U_\tau \cup \bigcup_{\varepsilon=\pm1} \bigcup_{j=1}^n \gamma_j^\varepsilon(U_\tau) \bigg) \subset U_\sigma }
\;\; \text{and} \;\;
\displaystyle{ \tilde \tau^k \bigg( U_\sigma \cup \bigcup_{\varepsilon=\pm1} \bigcup_{j=1}^n \gamma_j^\varepsilon(U_\sigma) \bigg) \subset U_\tau }
\]
for all non-zero integer $k$. Applying the Infinite Girth Criterion (Proposition~\ref{igc}) to $\tilde \sigma$, $\tilde \tau$, $U_\sigma$, $U_\tau$, and
\[
x \in \PML(\surface)-\bigg((U_\sigma \cup U_\tau) \cup \bigcup_{\varepsilon=\pm1} \bigcup_{j=1}^n \gamma_j^\varepsilon(U_\sigma \cup U_\tau)\bigg)
\]
we conclude that $\Gamma$ must be a non-cyclic group with infinite girth.
\end{proof}

Now, we allow $\surface$ to be disconnected, and consider an adequately reduced subgroup $\Gamma < \Mod(\surface)$. The idea of the proof of girth alternative in this case is to consider the partition $\surface=\surface^{[0]} \sqcup \surface^{[1]} \sqcup \surface^{[2]}$ from Lemma~\ref{partition: adeq-red}, and restrict the group $\Gamma$ to $\surface^{[2]}$ when $\surface^{[2]} \neq \nil$. The non-emptiness of $\surface^{[2]}$ is the source of the existence of non-abelian free subgroup in the Tits alternative, as well as the infinite girth in the girth alternative. When $\surface^{[2]}$ is empty, the following lemma from the proof of the Tits alternative shows that $\Gamma$ must be virtually free-abelian.

\begin{lem}[{c.f. \cite[\S8.7]{Ivanov:MCGBook}}]
\label{vAb: adeq-red}
Let $m \geq 3$ be an integer, and let $\Gamma$ be an adequately reduced group. $\Gamma$ is virtually free-abelian if and only if the restriction of $\Gamma_{(m)}$ to each component of $\surface$ is either trivial or infinite-cyclic, i.e. $\surface^{[2]}=\nil$ in the partition from Lemma~\ref{partition: adeq-red}.
\end{lem}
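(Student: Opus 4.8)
The plan is to prove the two implications separately, noting first that the final clause of the statement carries no extra content: by the very definition of the partition in Lemma~\ref{partition: adeq-red}, the condition $\surface^{[2]}=\nil$ asserts precisely that no component of $\surface$ carries a nontrivial non-cyclic restriction of $\Gamma_{(m)}$, i.e. that the restriction of $\Gamma_{(m)}$ to each component is trivial or infinite-cyclic. So the real task is to match this combinatorial condition with virtual free-abelianness of $\Gamma$.

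For the implication ``$\surface^{[2]}=\nil \Rightarrow \Gamma$ virtually free-abelian'', I would work through the finite-index normal subgroup $\Gamma_{(m)} \lhd \Gamma$, so that it suffices to show $\Gamma_{(m)}$ is free-abelian. Every element of $\Gamma_{(m)}$ fixes each component of $\surface$ (see \S\ref{Mod-m}), so componentwise restriction defines a homomorphism $\Gamma_{(m)} \to \prod_{i} (\Gamma_{(m)}|_{\surface^i})$; it is injective, since a mapping class restricting trivially to every component is trivial. When $\surface^{[2]}=\nil$, each factor $\Gamma_{(m)}|_{\surface^i}$ is trivial or infinite-cyclic, so the target is a finitely generated free-abelian group. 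A subgroup of a free-abelian group is free-abelian, whence $\Gamma_{(m)}$ is free-abelian and $\Gamma$ is virtually free-abelian.

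For the converse I would argue the contrapositive: if $\surface^{[2]} \neq \nil$, then $\Gamma$ contains a non-abelian free subgroup and hence is not virtually free-abelian. Restricting to $\surface^{[2]}$ and applying Theorem~\ref{trichotomy: adeq-red for disconn}(2), first to the empty collection and then to the resulting element, produces two pseudo-Anosov elements $\sigma,\tau \in (\Gamma^{[2]})_{(m)}$ with $\Fix(\sigma)\cap\Fix(\tau)=\nil$ in $\PML^\sharp(\surface^{[2]})$. Choosing disjoint attracting/repelling neighborhoods $U_\sigma,U_\tau$ and a basepoint outside them, the north-south dynamics lets me verify the hypotheses of the Free Subgroup Criterion (Proposition~\ref{fsc}) for high powers $\sigma^N,\tau^N$, exactly as in the ping-pong step of Proposition~\ref{girth-alt: adeq-red for conn}; thus $(\Gamma^{[2]})_{(m)} < \Gamma^{[2]}$ contains a non-abelian free group $F$.

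The one genuinely delicate point is transporting this free subgroup from the restriction $\Gamma^{[2]}$ back to $\Gamma$, since such properties do not in general pass well between a group and its quotients. Here it is harmless: the restriction map $\Gamma \to \Gamma^{[2]}$ is a surjection onto a group containing the free group $F$ of rank $2$, and because $F$ is free (hence projective) this surjection splits over $F$, so $\Gamma$ contains an isomorphic copy of $F$. A virtually free-abelian group cannot contain a non-abelian free subgroup: a finite-index abelian subgroup of $\Gamma$ would intersect $F$ in a subgroup that is simultaneously of finite index in $F$ (hence itself non-abelian free) and abelian, a contradiction. Therefore $\Gamma$ is not virtually free-abelian, completing the contrapositive. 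I expect the main obstacle to be exactly this lift, together with the bookkeeping of the restriction maps relating $\Gamma_{(m)}$, $(\Gamma^{[2]})_{(m)}$, and $(\Gamma_{(m)})^{[2]}$; the latter is routine given Lemma~\ref{two subgroups}.
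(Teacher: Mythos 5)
Your proof is correct. Be aware, though, that the paper does not actually prove this lemma --- it is stated with only a pointer to \S8.7 of Ivanov's book --- so your argument fills a genuine gap rather than paralleling an in-paper proof. Both halves hold up. The forward direction via the injection $\Gamma_{(m)}\hookrightarrow\prod_i\bigl(\Gamma_{(m)}|_{\surface^i}\bigr)$ into a finitely generated free-abelian group is the natural argument; injectivity is exactly the statement that a component-preserving mapping class which is trivial on every component is trivial, and the torsion-freeness of the restrictions guarantees that ``nontrivial cyclic'' means infinite cyclic, so the partition really is exhaustive. For the converse, applying Theorem~\ref{trichotomy: adeq-red for disconn}(2) first to the empty collection and then to $\{\sigma\}$ to get $\sigma,\tau$ with disjoint fixed sets, and running ping-pong on $\PML^\sharp(\surface^{[2]})$ via Proposition~\ref{fsc}, is sound, and your lift of the resulting free group through the restriction surjection $\Gamma\to\Gamma^{[2]}$ using projectivity of free groups is valid. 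A marginally shorter finish is available: a quotient of a virtually free-abelian group is virtually abelian and hence contains no non-abelian free subgroup, so the free subgroup of $\Gamma^{[2]}$ already rules out $\Gamma$ being virtually free-abelian without lifting anything. Either way, the lemma follows from the paper's own Theorem~\ref{trichotomy: adeq-red for disconn} together with elementary group theory.
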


\begin{prop} \label{girth-alt: adeq-red for disconn}
Let $\surface=\bigsqcup_{i=1}^{c(\surface)} \surface^i$ be a (possibly disconnected) compact orientable surface with $\chi(\surface^i)<0$ for all $i$, and suppose that $\Gamma < \Mod(\surface)$ is an adequately reduced subgroup. Then, $\Gamma$ is either a non-cyclic group with infinite girth or a virtually free-abelian group; moreover, these alternatives are mutually exclusive.
\end{prop}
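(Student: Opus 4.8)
The plan is to split the argument according to whether the subsurface $\surface^{[2]}$ in the partition of Lemma~\ref{partition: adeq-red} is empty. If $\surface^{[2]}=\nil$, then Lemma~\ref{vAb: adeq-red} immediately places $\Gamma$ in the second alternative: it is virtually free-abelian. So the substance lies in the case $\surface^{[2]}\neq\nil$, where the goal is to show that $\Gamma$ is a non-cyclic group with infinite girth.

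For that case, the key observation is that by Lemma~\ref{partition: adeq-red} every element of $\Gamma$ preserves $\surface^{[2]}$, so restriction to $\surface^{[2]}$ gives a surjective homomorphism $\Gamma \twoheadrightarrow \Gamma^{[2]}$. By Proposition~\ref{akhmedov criterion}, it then suffices to show that the quotient $\Gamma^{[2]}$ is a non-cyclic group with infinite girth; infinite girth and non-cyclicity pull back to $\Gamma$. To establish this for $\Gamma^{[2]}$, I would run the ping-pong construction of Proposition~\ref{girth-alt: adeq-red for conn} essentially verbatim, but now on the disconnected model space $\PML^\sharp(\surface^{[2]})$ and using Theorem~\ref{trichotomy: adeq-red for disconn}(2) in place of Theorem~\ref{trichotomy: adeq-red for conn}(2). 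Concretely: fix $m\geq 3$ and a generating set $\calG=\{\gamma_1,\dots,\gamma_n\}$ of $\Gamma^{[2]}$; use Theorem~\ref{trichotomy: adeq-red for disconn}(2) with the empty collection to produce a pseudo-Anosov $\sigma\in(\Gamma^{[2]})_{(m)}$; form the conjugates $\gamma_j^\varepsilon \sigma \gamma_j^{-\varepsilon}$, whose fixed-point sets are $\gamma_j^\varepsilon(\Fix(\sigma))$; and apply Theorem~\ref{trichotomy: adeq-red for disconn}(2) to the finite collection $\{\sigma\}\cup\{\gamma_j^\varepsilon\sigma\gamma_j^{-\varepsilon}\}$ to obtain a pseudo-Anosov $\tau\in(\Gamma^{[2]})_{(m)}$ with $\Fix(\tau)$ disjoint from $\Fix(\sigma)$ and from each $\gamma_j^\varepsilon(\Fix(\sigma))$ in $\PML^\sharp(\surface^{[2]})$.

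Having produced $\sigma$ and $\tau$, I would choose small neighborhoods $U_\sigma\supset\Fix(\sigma)$ and $U_\tau\supset\Fix(\tau)$ so that $U_\sigma\cap U_\tau=\nil$ and $\gamma_j^\varepsilon(U_\tau)\cap U_\sigma=\nil$ for all $j,\varepsilon$, then pass to high powers $\tilde\sigma=\sigma^N$, $\tilde\tau=\tau^N$ so that the componentwise north-south dynamics on $\PML^\sharp(\surface^{[2]})$ forces $\tilde\sigma^k(\PML^\sharp(\surface^{[2]})-U_\sigma)\subset U_\sigma$ and $\tilde\tau^k(\PML^\sharp(\surface^{[2]})-U_\tau)\subset U_\tau$ for all nonzero $k$. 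These inclusions give exactly hypotheses (1)--(3) of the Infinite Girth Criterion (Proposition~\ref{igc}) for $\tilde\sigma$, $\tilde\tau$, $U_\sigma$, $U_\tau$, and a basepoint $x$ chosen outside the relevant translates, and that criterion yields that $\Gamma^{[2]}$ is non-cyclic with infinite girth, as required.

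The main obstacle, and the reason this case is not a formality, is precisely the one flagged in the introduction: infinite girth does not obviously pass from a finite-index subgroup to the overgroup. This forces us to route the argument through the \emph{quotient} $\Gamma^{[2]}$ rather than through the torsion-free finite-index subgroup $\Gamma_{(m)}$, relying on Proposition~\ref{akhmedov criterion} to transport infinite girth upward to $\Gamma$. A secondary subtlety is that the north-south dynamics of pseudo-Anosov elements of a disconnected surface must be read off componentwise on $\PML^\sharp(\surface^{[2]})=\bigsqcup_i\PML(\surface^i)$, which is exactly the setting in which Theorem~\ref{trichotomy: adeq-red for disconn} is phrased. Finally, mutual exclusivity is handled cheaply: an infinite cyclic group is cyclic and hence excluded from the first alternative, while any other virtually free-abelian group is virtually solvable and therefore has finite girth by Corollary~\ref{virtually solvable}, so it cannot simultaneously have infinite girth.
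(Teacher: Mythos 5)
Your proposal is correct and follows essentially the same route as the paper: the same partition from Lemma~\ref{partition: adeq-red}, the same dichotomy on whether $\surface^{[2]}$ is empty (using Lemma~\ref{vAb: adeq-red} in the empty case), the same reduction to the quotient $\Gamma^{[2]}$ via Proposition~\ref{akhmedov criterion}, and the same ping-pong argument on $\PML^\sharp(\surface^{[2]})$ driven by Theorem~\ref{trichotomy: adeq-red for disconn}(2) and the Infinite Girth Criterion. Your explicit remark on mutual exclusivity via Corollary~\ref{virtually solvable} is a small, correct addition that the paper leaves implicit.
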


\begin{proof}
% We will write the restriction of $\Gamma$ to $\surface^{[\ell]}$ by $\Gamma^{[\ell]}$.
Consider the decomposition $\surface=\surface^{[0]} \sqcup \surface^{[1]} \sqcup \surface^{[2]}$ in Lemma~\ref{partition: adeq-red}. If $\surface^{[2]}=\nil$, then $\Gamma$ is virtually free-abelian by Lemma~\ref{vAb: adeq-red}. Hence, we may assume that $\surface^{[2]} \neq \nil$. Note that the restriction $\Gamma^{[2]}$ of $\Gamma$ to $\surface^{[2]}$ is the image of a homomorphism from $\Gamma$ into $\Mod(\surface^{[2]})$. If $\Gamma^{[2]}$ is a non-cyclic group with infinite girth, $\Gamma$ itself must also be a noncyclic group with infinite girth by Proposition~\ref{akhmedov criterion}. Hence, we may as well assume that $\surface=\surface^{[2]}$ and $\Gamma=\Gamma^{[2]}$.

Choose $m \geq 3$. Let $\calG=\{\gamma_1, \cdots, \gamma_n\}$ be a generating set of $\Gamma$. By Theorem~\ref{trichotomy: adeq-red for disconn}, (i) there is a pseudo-Anosov element $\sigma \in \Gamma_{(m)} \lhd \Gamma$, and (ii) there is another pseudo-Anosov element $\tau \in \Gamma_{(m)} \lhd \Gamma$ such that $\Fix(\tau) \cap \Fix(\sigma)=\nil$ and $\Fix(\tau) \cap \Fix(\gamma_j^\varepsilon \sigma \gamma_j^{-\varepsilon})$ for each $1 \leq j \leq n$ and $\varepsilon=\pm1$. Here, the conjugates $\gamma_j^\varepsilon \sigma \gamma_j^{-\varepsilon}$ are pseudo-Anosov elements with $\Fix(\gamma_j^\varepsilon \sigma \gamma_j^{-\varepsilon})= \gamma_j^\varepsilon(\Fix(\sigma))$ in $\PML^\sharp(\surface)$. The arguments identical to the proof of Proposition~\ref{girth-alt: adeq-red for conn} --- with the space $\PML(\Sigma)$ replaced by $\PML^{\sharp}(\surface)$ --- completes the proof of the proposition.
\end{proof}

%%%%%%%%%%%%%%%%%%%%%%%%%%%%%%%%%%%%%%%
\subsection{Girth of Reducible Subgroups}
\label{Girth: Red}
%%%%%%%%%%%%%%%%%%%%%%%%%%%%%%%%%%%%%%%

We now consider the girth of a reducible group $\Gamma$. We take the canonical reduction system $\calC$, and consider the canonical reduction $\red_{\calC}(\Gamma)<\Mod(\surface_{\calC})$, for which the girth alternative holds by Proposition~\ref{girth-alt: adeq-red for disconn}. The following lemma, extracted from the proof of the Tits alternative \cite[\S8.9]{Ivanov:MCGBook}, characterizes virtually free-abelian subgroups $\Gamma$ in terms of its reduction $\red_{\calC}(\Gamma)$.

%\begin{lem}
%Fix $m \geq 3$ be an integer. Then, $\red_{\calC}(\Gamma_{(m)})$ is a finite-index normal subgroup of $(\red_{\calC}(\Gamma))_{(m)}:=\red_{\calC}(\Gamma)\cap \Mod_{(m)}(\surface_{\calC})$.
%\end{lem}

%\begin{proof}
%As noted in \S\ref{Mod-m}, $\Gamma_{(m)} \lhd \Gamma$ acts trivially on $H_1(\surface;\Z/m\Z)$ and fixes each component of $\calC$ with the orientation preserved. Thus, it follows that $\red_{\calC}(\Gamma_{(m)}) < \red_{\calC}(\Gamma)$ acts trivially on $H_1(\surface_\calC;\Z/m\Z)$. Hence, $\red_{\calC}(\Gamma_{(m)}) < (\red_{\calC}(\Gamma))_{(m)} < \red_{\calC}(\Gamma)$.

%Since $\Gamma_{(m)}$ is a finite-index normal subgroup of $\Gamma$, we see that $\red_{\calC}(\Gamma_{(m)})$ is a finite-index normal subgroup of $\red_{\calC}(\Gamma)$. Hence, we conclude that $\red_{\calC}(\Gamma_{(m)})$ must also be a finite-index normal subgroup of the intermediate subgroup $(\red_{\calC}(\Gamma))_{(m)}$.
%\end{proof}

\begin{lem} [{c.f. \cite[\S8.9]{Ivanov:MCGBook}}]
\label{vAb: red}
$\Gamma$ is virtually free-abelian if and only if the canonical reduction $\red_{\calC}(\Gamma)$ is virtually free-abelian.
\end{lem}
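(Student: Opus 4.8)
The plan is to prove both implications, with essentially all of the work in the reverse direction. Throughout, write $\red_{\!\calC}\colon \Gamma \to \Mod(\surface_{\calC})$ for the reduction homomorphism, recall that its kernel $\hat K$ is the free-abelian group generated by the Dehn twists along the components of $\calC$, and set $Q=\red_{\!\calC}(\Gamma)$. Since $\calC$ is the canonical reduction system, $Q<\Mod(\surface_{\calC})$ is adequately reduced, so Lemma~\ref{vAb: adeq-red}, Lemma~\ref{adeq-red and Gamma_m}, and Lemma~\ref{partition: adeq-red} are all available for $Q$. The forward implication is immediate: $Q=\Gamma/\hat K$ is a quotient of $\Gamma$, and a finitely generated quotient of a virtually free-abelian group is again virtually free-abelian, since the image of a finite-index free-abelian subgroup is a finite-index finitely generated abelian subgroup of $Q$, which in turn contains a finite-index free-abelian subgroup.

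For the reverse implication, suppose $Q$ is virtually free-abelian. By Lemma~\ref{vAb: adeq-red}, the subsurface $\surface_{\calC}^{[2]}$ in the partition of $\surface_{\calC}$ associated to $Q$ is empty, so the restriction of $Q$ to each component $P$ of $\surface_{\calC}$ is either finite or virtually infinite-cyclic, and in the latter case is generated up to finite index by a pseudo-Anosov element $\phi_P\in\Mod(P)$. I would then pass to a finite-index subgroup $\Gamma'<\Gamma$ that is \emph{pure}, in the sense that it fixes every component of $\calC$ with its orientation and fixes every component of $\surface_{\calC}$, and for which $A:=\red_{\!\calC}(\Gamma')$ is free-abelian and restricts on each $P$ to $\langle\phi_P\rangle\cong\Z$ or to the trivial group. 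Such a $\Gamma'$ exists because purity and reducing into a fixed finite-index free-abelian subgroup of $Q$ are each finite-index conditions, and $A$, as a subgroup of a free-abelian group, is free-abelian. Because $\Gamma'$ is pure, it commutes with every Dehn twist along $\calC$, so
\[
1 \longrightarrow \hat K \longrightarrow \Gamma' \longrightarrow A \longrightarrow 1
\]
is a \emph{central} extension with both $\hat K$ and $A$ free-abelian.

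The heart of the matter is to split this extension. Restriction embeds $A$ in $\prod_P\Mod(P)$, with image $\langle\phi_P\rangle$ or trivial in each factor; write $a_P\colon A\to\Z$ for the resulting component homomorphisms. For each $P$ I would choose a lift $\tilde\phi_P\in\Mod(\surface)$ supported in the closure of $P$ and restricting to $\phi_P$ on $P$. Distinct $\tilde\phi_P$ have disjoint supports and hence commute, and $\langle\tilde\phi_P\rangle\cong\Z$ meets $\hat K$ trivially, since $\tilde\phi_P^{\,n}\in\hat K$ would force $\phi_P^{\,n}=1$ in $\Mod(P)$. The assignment sending $a\in A$ to $\prod_P\tilde\phi_P^{\,a_P(a)}$ is then a well-defined homomorphic section $s\colon A\to\Mod(\surface)$ of $\red_{\!\calC}$ landing in the pure mapping classes. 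Consequently the pure preimage $H:=\red_{\!\calC}^{-1}(A)\cap\{\text{pure elements}\}$ fits into a central, split extension $1\to\hat K\to H\to A\to1$, so $H=\hat K\cdot s(A)$ is abelian; and $\Gamma'\subset H$, because each $g\in\Gamma'$ is pure and $g\,s(\red_{\!\calC}(g))^{-1}\in\hat K$. Hence $\Gamma'$ is abelian of finite index in $\Gamma$, so $\Gamma$ is virtually free-abelian, as desired.

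I expect the splitting in the third paragraph to be the main obstacle. Abstractly, a central extension of a free-abelian group by a free-abelian group need not be virtually abelian — the integral Heisenberg group is the standard counterexample — so the vanishing of the commutator pairing cannot be argued formally and must come from geometry. The essential input is precisely $\surface_{\calC}^{[2]}=\nil$: it guarantees that the restriction to each piece $P$ is at most infinite-cyclic, which is what makes the disjointly supported lifts $\tilde\phi_P$ assemble into a genuine homomorphic section. The remaining care is routine bookkeeping — checking that $s$ is well defined and intertwines with $\red_{\!\calC}$, that $\hat K$ really is central in $H$, and that the chosen $\Gamma'$ is of finite index — but the geometric content lives entirely in the construction of $s$.
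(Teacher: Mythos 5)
Your argument is correct in substance, but note that the paper itself supplies no proof of this lemma: it is quoted from \cite[\S8.9]{Ivanov:MCGBook}, whose argument is in the same spirit as yours --- a pure element whose reduction lies in a product of cyclic pseudo-Anosov groups decomposes into commuting pieces (twists along $\calC$ and pseudo-Anosov pieces supported on the components of $\surface_{\calC}$), which is exactly your factorization $g=k\cdot s(\red_{\!\calC}(g))$ with $k$ a multitwist. You have also correctly located the one place where geometry must enter (the Heisenberg obstruction to splitting a central extension of free-abelian by free-abelian), and the forward direction is handled properly.

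Two points should be tightened. First, your justification for arranging $A|_P\subset\langle\phi_P\rangle$ is not quite sufficient: a finite-index free-abelian subgroup $B<Q$ can still restrict to $P$ with torsion (a quotient of $\Z^n$ need not be torsion-free, and $Q|_P$ may contain finite-order classes commuting with $\phi_P$), in which case your component homomorphisms $a_P\colon A\to\Z$ do not exist. The fix is to take $B=Q_{(m)}$: by Lemma~\ref{adeq-red and Gamma_m} together with Lemma~\ref{vAb: adeq-red}, its restriction to each component is exactly trivial or $\langle\phi_P\rangle$ with $\phi_P$ pseudo-Anosov, and it is free-abelian because the restriction homomorphism embeds it into $\prod_P\langle\phi_P\rangle$. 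Second, the kernel of $\red_{\!\calC}$ on your group $H$ of all pure preimages of $A$ is the full twist group along $\calC$, not $\hat K=\ker(\red_{\!\calC}|_{\Gamma'})$; this does not affect the conclusion, since the full twist group is still central among pure classes fixing each component of $\calC$, so $H$ remains abelian and contains $\Gamma'$.
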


\begin{prop} \label{girth-alt: red}
Let $\surface=\bigsqcup_{i=1}^{c(\surface)}\surface^i$ be a compact orientable surface with $\chi(\surface^i)<0$, and suppose that $\Gamma < \Mod(\surface)$ is a reducible subgroup. Then, $\Gamma$ is either a non-cyclic group with infinite girth or a virtually free-abelian group; moreover, these alternatives are mutually exclusive.
\end{prop}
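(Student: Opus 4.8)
The plan is to reduce the reducible case to the adequately reduced case already handled in Proposition~\ref{girth-alt: adeq-red for disconn}, using the canonical reduction homomorphism $\red_{\calC}$ and the structural characterization in Lemma~\ref{vAb: red}. Since $\Gamma$ is reducible, its canonical reduction system $\calC$ is non-empty, and the reduction $\red_{\calC}(\Gamma) < \Mod(\surface_{\calC})$ is adequately reduced (this is precisely what it means for $\calC$ to be the canonical, i.e. minimal adequate, reduction system). The surface $\surface_{\calC}$ is the compactification of $\surface - \calC$; since each $\surface^i$ has $\chi(\surface^i)<0$, each component of $\surface_{\calC}$ again has negative Euler characteristic, so Proposition~\ref{girth-alt: adeq-red for disconn} applies to $\red_{\calC}(\Gamma)$.

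\textbf{The dichotomy for the reduction.} First I would apply Proposition~\ref{girth-alt: adeq-red for disconn} to the adequately reduced group $\red_{\calC}(\Gamma)$: it is either a non-cyclic group with infinite girth, or virtually free-abelian. In the first case, $\red_{\calC}$ is a surjection from $\Gamma$ onto a non-cyclic group of infinite girth, so Proposition~\ref{akhmedov criterion} immediately gives that $\Gamma$ itself has infinite girth; moreover $\Gamma$ is non-cyclic because it surjects onto a non-cyclic group. In the second case, where $\red_{\calC}(\Gamma)$ is virtually free-abelian, Lemma~\ref{vAb: red} tells us exactly that $\Gamma$ is virtually free-abelian. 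This yields the stated alternative.

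\textbf{Mutual exclusivity.} Finally I would argue that the two alternatives cannot both hold. A virtually free-abelian group satisfies a law (a nontrivial commutator-type identity holds on a finite-index subgroup, and the relevant property passes up), so by Corollary~\ref{virtually solvable} — virtually solvable groups have finite girth unless infinite cyclic — any virtually free-abelian $\Gamma$ that is non-cyclic has finite girth. Hence it cannot simultaneously have infinite girth and be non-cyclic, establishing mutual exclusivity. The one point requiring care is the infinite-cyclic borderline case: if $\Gamma$ were infinite cyclic it would trivially be both virtually free-abelian and of infinite girth, but an infinite cyclic group is cyclic, so it falls only under the virtually free-abelian alternative and the ``non-cyclic'' qualifier in the infinite-girth option rules out any overlap.

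\textbf{Main obstacle.} The genuine content of this proposition is almost entirely imported: the hard analytic and ping-pong work lives in Proposition~\ref{girth-alt: adeq-red for disconn} and in the Infinite Girth Criterion, while the structural input is packaged in Lemma~\ref{vAb: red}. The only real subtlety here is the inability to pass infinite girth down to finite-index subgroups (as emphasized in \S\ref{Introduction}); this is circumvented by the fact that we are pushing girth \emph{forward} along the surjection $\red_{\calC}$ via Proposition~\ref{akhmedov criterion} rather than restricting to a subgroup. Thus the step I expect to require the most attention is simply the correct invocation of Proposition~\ref{akhmedov criterion} in the right direction, together with verifying non-cyclicity of $\Gamma$ from that of its image.
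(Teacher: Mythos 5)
Your proposal is correct and follows essentially the same route as the paper: apply Proposition~\ref{girth-alt: adeq-red for disconn} to the adequately reduced image $\red_{\calC}(\Gamma)$, push infinite girth forward along the surjection $\red_{\calC}$ via Proposition~\ref{akhmedov criterion} in one case, and pull back virtual free-abelianness via Lemma~\ref{vAb: red} in the other. Your added remarks on mutual exclusivity (via Corollary~\ref{virtually solvable}) and on why the finite-index obstruction is avoided are consistent with the paper's framework and do not change the argument.
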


\begin{proof}
Let $\calC$ be the canonical reduction system for $\Gamma$ and $\red_{\calC}$ be the corresponding reduction homomorphism. The reduction $\red_{\calC}(\Gamma)$ is adequately reduced, and hence by Proposition~\ref{girth-alt: adeq-red for disconn}, $\red_{\calC}(\Gamma)$ is either a non-cyclic group with infinite girth or a virtually free-abelian group. If $\red_{\calC}(\Gamma)$ is a non-cyclic group with infinite girth, noting that $\red_{\calC}(\Gamma)$ is the image of the homomorphism $\red_{\calC}:\Gamma \rightarrow \Mod(\surface_{\calC})$, we see that $\Gamma$ is also a non-cyclic group with infinite girth by Proposition~\ref{akhmedov criterion}. If $\red_{\calC}(\Gamma)$ is virtually free-abelian, so is $\Gamma$ by Lemma~\ref{vAb: red}.
\end{proof}

%%%%%%%%%%%%%%%%%%%%%%%%%%%%%%%%%%%%%%%
\bibliography{../../../MyTeX/MyMaster}
\bibliographystyle{../../../MyTeX/hamsalpha}
%%%%%%%%%%%%%%%%%%%%%%%%%%%%%%%%%%%%%%%

%%%%%%%%%%%%%%%%%%%%%%%%%%%%%%%%%%%%%%%
%
\end{document}